\documentclass[11pt]{article}
\newif\ifPaperJRNL
\PaperJRNLfalse

\usepackage[USenglish]{babel}
\usepackage[
letterpaper,
textwidth=6.5in,
textheight=648pt,
centering
]{geometry}
\usepackage[titletoc,title]{appendix}
\usepackage{xpatch}
\xpretocmd{\appendixpagename}{\sffamily}{}{}
\usepackage{amssymb,amsmath,amsthm,amsfonts,bbm,thm-restate}
\usepackage{enumitem}
\usepackage{thmtools}
\usepackage{xcolor}
\usepackage{etoolbox}
\usepackage{dsfont}
\usepackage{mathtools}
\usepackage{scalerel}

\usepackage{natbib}
\bibpunct{(}{)}{;}{a}{,}{,}

\usepackage[hypertexnames=false]{hyperref}
\usepackage[capitalise]{cleveref}

\newcommand{\PaperTitle}{Perturbation Duality for Robust and Distributionally Robust Optimization: Short and General Proofs}

\newcommand{\PaperArticleAuthors}{%
Louis L. Chen\thanks{Department of Operations Research, Naval Postgraduate School. \texttt{louis.chen@nps.edu}}
\and
Jake Roth\thanks{Department of Industrial and Systems Engineering, University of Minnesota. \texttt{rothjakem@gmail.com}}
\and
Johannes O. Royset\thanks{Department of Industrial and Systems Engineering, University of Southern California. \texttt{royset@usc.edu}}
}

\newcommand{\PaperAbstract}{%
 Duality is a foundational tool in robust and distributionally robust optimization (RO/DRO), underpinning both analytical insights and tractable reformulations. Whereas RO/DRO duality is commonly established through minimax arguments or conic duality, we use perturbation duality to obtain
    new, more general results with short proofs.
We show that this perspective provides a natural and unifying framework for deriving RO/DRO dual formulations, proving the associated duality results, and diagnosing the regularity assumptions on which they depend.
First, guided by perturbation duality, we establish new duality theorems for a recent DRO framework that unifies several canonical models, including $\phi$-divergence and Wasserstein models, through optimal transport subject to conditional moment constraints. 
Our results resolve an open conjecture on this DRO duality by clarifying the role of compactness: compactness itself is not necessary, but can be replaced by perturbation-based regularity conditions.
Second, we revisit \emph{robust duality}, commonly described as \emph{primal-worst equals dual-best.}
Using bifunctions, we unify dual-best formulations appearing in the literature and derive concise perturbation-based proofs that streamline recent results. Overall, the paper positions perturbation duality as a versatile and underutilized tool for RO and DRO, offering both conceptual unification and technical generality across a broad class of models.
}

\input{./main-macros}

\newcommand{\PaperAppendix}{%
\appendix
\appendixpage
}
\newcommand{\PaperEndAppendix}{}
\newcommand{\PaperBibStyle}{abbrvnat}

\title{\PaperTitle}
\author{\PaperArticleAuthors}
\date{}

\begin{document}
\maketitle

\begin{abstract}
\PaperAbstract
\end{abstract}

\section{Introduction}
\label{sec:introduction}
Duality is a fundamental tool for analysis and application in robust and distributionally robust optimization (RO/DRO).
While most RO/DRO duality results are derived through saddle-point, Lagrangian, or conic arguments, an alternative route derives duals via convex conjugacy after embedding the uncertain problem in a family of perturbed problems \citep{rockafellar1974conjugate}. 
Although this framework has long been central in convex analysis and infinite-dimensional optimization, it has been used only sparingly in RO/DRO. We show that it provides a natural and unifying language for RO/DRO duality: in DRO, it blends with ``mixing arguments" to identify the needed regularity for primal-dual equality; in RO, it unifies robust dual formulations.
In particular, by recasting primal-dual equality as the stability of a perturbation limit, this perspective helps distinguish conditions that are essential
from proof-specific technical assumptions.
In this work, we use the approach to: (1) derive new results and insights in a recent, prominent DRO framework centered on optimal transport \citep{jiajin2025unifying}; and (2) 
unify a central RO duality principle known as \textit{robust duality}, or \textit{primal-worst equals dual-best} \citep{zhen2025unified, beck2009duality}.

\subsection{Literature Review}
The majority of the RO literature employs conic duality, e.g., \cite{ben1999robust,ben2002robust, ben2009robust, bertsimas2004price,bertsimas2011theory}. In DRO literature, conic duality is also popular, e.g., \cite{esfahani_kuhn_2018, zhao2018data}, but other duality frameworks such as Lagrangian, Fenchel, and Kantorovich are used in recent works \citep{murthy2019quantifying, gao2023wasserstein, wang2025sinkhorn}.

The use of perturbation analysis in the study of duality for RO/DRO models is considerably less common, and we review the few entries in the literature here. \cite{li2011robust} and later  \cite{dinh2017characterizations} provide a general robust conjugate duality framework for convex optimization problems with data uncertainty, establishing dual representations of robust counterparts using convex conjugates and perturbation functions; specifically, they define and investigate \textit{robust (strong) duality}.
Related perturbation-based approaches have also been studied in the context of vector and multiobjective RO. For example, \cite{chai2021robust} investigates robust strong duality for uncertain optimization problems using an abstract conjugate duality framework, introducing generalized conjugate functions to derive dual problems even in nonconvex settings. 

\paragraph{Related Work.}

Two methods to modeling distributional ambiguity—$\phi$-divergence \citep{bental1987penalty,bayraksan_love_2015, BenTalPhiDivergence, wang2025sinkhorn} and Wasserstein distance \citep{esfahani_kuhn_2018, murthy2019quantifying, gao2023wasserstein}—have become popular, if not standard, modeling approaches in DRO. They have been recently unified by \cite{jiajin2025unifying} in a framework combining optimal transport with conditional moment constraints, and whose duality generalizes those of the prior models. Therein, a discrepancy metric is used that is itself a transport problem with conditional moment constraints, studied recently in \cite{carlier2026weak}.
In this note, we derive new results and insights for this framework via perturbation duality%
. Like \cite{gao2022short}, we address the role of 
the \textit{Interchangeability Principle} \citep{rockafellar1971integral} in the formulation of this distributionally robust dual formulation, and like \cite{gao2022short}, we also strive for cogent, short, and general proofs throughout via perturbations.

As far as a unified duality theory for RO and DRO, however, a framework was first proposed in \cite{beck2009duality}, and subsequent investigation was followed by \cite{jeyakumar2014strong, li2011robust, dinh2017characterizations}, and most recently by \cite{zhen2025unified}. Termed \textit{robust duality} as well as \textit{primal-worst equals dual-best}, the literature has sought to provide ways to derive dual formulations to the prototypical min-max forms of RO/DRO problems. 
To date, the notion of dual-best has been formalized in different ways across this literature, leading to both nonconvex and convex formulations.

\subsection{Contributions} 

    We highlight the main contributions. In Section~\ref{sec:lacking_interior}, Theorems~\ref{prop:main_result}, \ref{thm:Cb_Linfty}, and \ref{thm:combined} extend the duality result of \cite{jiajin2025unifying}, showing how compactness can be replaced by perturbation-based Assumptions~\ref{assn:**a},  \ref{assn:**b}, and \ref{ass:local-conditional-ui}. This resolves the compactness conjecture posed there and clarifies the role of compactness in the associated DRO duality. 
    In Section~\ref{sec:interior}, we use bifunctions to concisely extend the results of \cite{zhen2025unified} and place the convex and nonconvex “dual-best” formulations appearing in the robust-optimization literature within a common conceptual structure.

\subsection{Notational Preliminaries}
\label{sec:preliminaries}
Let $\Reals\coloneqq\R\cup\{\pm\infty\}$,
$\R_+\coloneqq[0,+\infty)$, $[m]\coloneqq\{0,\ldots,m\}$, and
$(m)\coloneqq\{1,\ldots,m\}$. For a normed space $X$, write $X^*$
for its continuous dual and $\langle\cdot,\cdot\rangle$ for the pairing.
For convex $f:X\to\Reals$, we use the standard notation for domain $\dom f$, closure $\cl f$, subdifferential $\partial f$, and conjugate $f^*$; ``proper'' means $f> -\infty$ and
$\dom f\neq\varnothing$, and $f^\circ\coloneqq-(-f)^*(-\,\cdot\,)$. The symbols $\iota_A$ and $\ind_A$ denote the $0$--$\infty$ and
$1$--$0$ indicators, respectively, and $f^\pm\coloneqq\max\{\pm f,0\}$.
For $a\geq0$, right scalar multiplication is $(fa)(x)\coloneqq a f(x/a)$ if $a>0$ and
$(f0)(x)\coloneqq\iota_{\{0\}}(x)$ \citep[p.~35]{Rockafellar.70}. For a measurable space $\X$, let $\mathcal P(\X)$ and $\mathcal M(\X)$
denote its probability and finite signed measures. For finite $\mu$,
$L^1(\mu)$ and $L^\infty(\mu)$ carry their usual norms, with
$(L^1(\mu))^*=L^\infty(\mu)$ \citep[Theorem~4.14]{brezis2011functional};
$\mathcal C_b(\X)$ denotes the bounded continuous functions. We write
${\rm d}_{\rm TV}(\mu,\nu)\coloneqq
\tfrac12\sup_A\abs{\mu(A)-\nu(A)}$; $\projection_i$ denotes both a
coordinate projection and its pushforward.

\subsection{Perturbation Preliminaries: Bifunctions, Perturbation Functions, and Lagrangians}
\label{sec:bifunction}
Let $X$ be 
a normed space upon which
if a convex (primal objective) function $f:X \to \Reals$ is defined, then %
\(
\inf_{x\in X}f(x)
\) 
will be referred to as a \emph{primal} optimization problem.\PaperFootnote{We adopt the convention of having an optimization problem be made synonymous with its optimal value in $\Reals$ for the sake of expediency. We refer the interested reader to discussions in \citet[Sections 28-29] {Rockafellar.70} that clarify possible misunderstandings and the technicalities around defining a ``problem" in this framework.} 
Letting $U$ be another normed space, a convex function $F:U\times X\to\Reals$ satisfying $F(0,x)=f(x)$ for all $x\in X$ will be referred to as a \emph{(primal) bifunction}, which in turn yields a convex \emph{perturbation function} $p: U \to \Reals$ by
\[
p(u) \coloneqq (\inf_x F)(u) = \inf_{x \in X} F(u,x).
\]
In words, a bifunction $F$
yields a $u$- parametrized family of optimization problems, $\{p(u): u\in U\}$.\PaperFootnote{``This is not so much a new concept as a different way of treating an old concept, the distinction between `variables' and `parameters.'" \citep[p.~291]{Rockafellar.70}} 
Further, a bifunction $F$ will admit a concave \emph{dual bifunction} $F^{\sf d}: X^* \times U^* \rightarrow \Reals$
via
\[
F^{\sf d}(x^*,u^*) \coloneqq -(F^*)(-u^*,x^*)
=
\inf_{x\in X,u\in U} F(u,x) - \inner{x^*}{x} + \inner{u^*}{u}
.
\]
Analogously, $F^{\sf d}$ yields a \emph{dual perturbation function} $q: X^* \rightarrow \Reals$ given by
\[
q(x^*) \coloneqq (\sup_{u^*} F^{\sf d})(x^*) = \sup_{u^* \in U^*} F^{\sf d}(x^*,u^*),
\]
whereby
$
q(0) = \sup_{u^* \in U^*} F^{\sf d}(0,u^*) = \cl p(0)
$ is the \emph{dual} optimization problem whose solution set is $\partial p(0)$, and we say $F$ is \textit{normal} when $\cl p(0) = p(0),$ or \textit{stable} when $\partial p(0) \neq \varnothing$. 
Symmetrically, when $G(u,x)$ is a concave bifunction, its convex dual bifunction $G_{\sf d}: X^* \times U^* \rightarrow \Reals$ is given by
\[
G_{\sf d}(x^*,u^*)\coloneqq -G^\circ(-u^*, x^*) = \sup_{x, u} G(u,x) - \inner{x^*}{x} + \inner{u^*}{u}.
\]
We note that $(F^{\sf d})_{\sf d}$, defined over $U^{**}\times X^{**},$ agrees with $\cl F$ over the subspace $U\times X$—the same holding for $(G_{\sf d})^{\sf d}$ and $\cl G$. Hence, when $X$ and $U$ are reflexive, $(F^{\sf d})_{\sf d} \equiv F$ when $F$ is closed. 
Finally, any convex bifunction $F: U \times X \rightarrow \Reals$ induces a \emph{Lagrangian} $\mathcal{L}:U^* \times X \rightarrow \Reals$
\[
\mathcal{L}(u^*,x)\coloneqq -[F(\cdot, x)]^*(-u^*) = \inf_u \;\; F(u,x) + \langle u^*, u\rangle,
\]
for which it will (usefully) follow that for any $u^{**} \in U$
\begin{align}
\label{eq:relation_Lag}
\sup_{u^*} \;\; \mathcal{L}(u^*,x) + \langle u^{**}, u^*\rangle = [-\mathcal{L}(\cdot, x)]^*(u^{**}) = [F(\cdot, x)]^{**}(u^{**}) = \cl[F(\cdot, x)](u^{**}),\quad \forall x\in X.
\end{align}
For concave bifunctions, Lagrangians are treated symmetrically with ``$\sup$'' in place of ``$\inf$''.

\section{Conditional Moment Optimal Transport Duality via Perturbations}
\label{sec:lacking_interior}

In this section, we examine the DRO framework of \cite{jiajin2025unifying} under the lens of perturbations.
In particular, we resolve an open conjecture on this framework's duality by clarifying the role of compactness: compactness itself is not necessary, but can be replaced by perturbation-based regularity conditions.
The remainder of this section is organized as follows.
\cref{sec:cmot-problem-setting,sec: Conjecture} introduce the CM-OT problem and its two-part duality, as well as review existing results and a compactness conjecture.
\cref{sec: Counterexample} addresses the conjecture directly and shows that compactness cannot be removed without a suitable replacement.
One such replacement is introduced in
\cref{sec:Closing Duality Gap}, which studies the first part of the CM-OT duality; the second part of the duality is addressed in \cref{sec:InterchangeabilityPrinciple}.
To keep the exposition focused on the duality mechanisms, we defer the measure-theoretic proofs to \hyperref[apx: CM-OT Proofs]{Appendix~\ref*{apx: CM-OT Proofs}}.

\subsection{Problem Setting and Formulation}
\label{sec:cmot-problem-setting}
We adopt the setting of \cite{jiajin2025unifying}, introducing several elements—spaces, measures, as well as shorthand notations/conventions—that the exposition to follow will crucially use.
$\V$ will be a closed subset of a vector space and equipped with $\sigma$-algebra $\mathcal{G}$; and 
$\W$ will be a closed subset of $\R$ equipped with $\sigma$-algebra $\mathcal{B}$. When $\tau_\mathcal{V}$ and $\tau_\mathcal{W}$ are topologies defined on $\V$ and $\W$ respectively, $\mathcal{G} = \sigma(\tau_\mathcal{V})$ and/or $\mathcal{B} = \sigma(\tau_\mathcal{W})$;  $\tau_\mathcal{W}$ will always denote the subspace topology. 
We set $\mathcal{U}\coloneqq \V \times \W$ and $\mathcal{F} \coloneqq \mathcal{G} \times \mathcal{B}$. Note that $\mathcal{F} = \sigma(\tau_\mathcal{V} \times \tau_\mathcal{W})$ when it is the case that $\mathcal{G} = \sigma(\tau_\mathcal{V})$ and $\mathcal{B} = \sigma(\tau_\mathcal{W})$
We fix a \emph{reference} measure $\hat\mu\in\mathcal{P}(\U)$ with $\V$-marginal $\hat\nu\coloneqq \projection_{\V}\hat\mu$.
We also write $\Gamma(\alpha,\beta)$
for the couplings of $\alpha,\beta\in\mathcal P(\U)$ and
$\Gamma_{\hat\mu}\coloneqq\bigcup_{\mu\in\mathcal P(\U)}
\Gamma(\hat\mu,\mu)$. For $\gamma\in\Gamma_{\hat\mu}$ we write
$(\hat V,\hat W,V,W)\sim\gamma$ with the understanding that $\hat U \equiv (\hat V, \hat W) \sim \hat \mu$ and $U \equiv(V,W) \sim \mu$ for some $\mu \in \mathcal{P}(\U)$.
We also let
\(
\Gamma_{\mathcal{W}} \coloneqq \{\gamma \in \mathcal \mathcal{P}(\U\times\U) : \mathbbm{E}_\gamma[\abs{W}] < \infty\}.
\)

Next we recall a discrepancy between members of $\mathcal{P}(\U \times \U)$ first proposed in \cite{jiajin2025unifying} {and also studied in a more general setting by \cite{carlier2026weak}}.
This will incorporate an $(\mathcal{F} \times \mathcal{F})$-measurable (transport) cost 
$c:\mathcal U \times\mathcal U \to (-\infty,+\infty]$ with $\inf c > -\infty$, 
a (constraint-rhs) function $h\in L^1(\hat\nu)$,
and an $\mathcal{F}$-measurable (objective) function
$f:\mathcal U\to\R$. 
\begin{definition}[\cite{jiajin2025unifying}]
    \label{def:OT_discrepancy}
    Given $\hat\mu,\mu \in \mathcal{P}(\U)$, and $h \in L^1(\hat \nu)$ for $\hat\nu(\cdot) = \hat \mu(\,\cdot \times \mathcal{W})$, 
    we define the resulting \emph{optimal transport discrepancy with conditional moment constraints} by
    \begin{align}
        \label{eq:M_h}
        \mathbbm{M}_h(\hat\mu,\mu)
        \coloneqq
        \left\{\!\!\!
        \begin{array}{cl}
            \displaystyle \inf_{\gamma\in\Gamma(\hat\mu,\mu) \cap \Gamma_{\mathcal{W}}} & \mathbbm{E}_\gamma[c(\hat V,\hat W;V,W)]\\
            \st & \mathbbm{E}_\gamma[W\mid\hat V]-h(\hat V)=0,\;{\hat\nu\text{-a.s.}}
        \end{array}
        \!\!\!\right\}.
    \end{align}

    \begin{remark}
    \label{rmk:Vhat_broader}
    In \cref{def:OT_discrepancy}, we may replace $\mathbbm{E}_\gamma[W\mid\hat V]$ with $\mathbbm{E}_\gamma[W\mid H(\hat V)]$, for some measurable function $H$ of $\hat V$, without requiring conceptual modification to the arguments developed in the remainder of \cref{sec:lacking_interior}. For the sake of presentation, we do not pursue this generality in this section.
    \end{remark}
\end{definition}
The discrepancy $\mathbbm{M}_h(\hat\mu,\mu)$
yields the following uncertainty quantification problem.

\begin{definition}[\cite{jiajin2025unifying}]
    For $\rho\in\R$ and $\hat\mu\in\mathcal{P}(\U)$,
    the \emph{(primal) conditional-moment and optimal-transport-constrained} uncertainty quantification problem, or \emph{CM-OT problem} is given by
    \begin{equation}
        \label{eq:primal}
        \sup \bigl\{ \mathbbm{E}_{\mu}[f] : \mu\in\mathcal{P}(\U), \;\mathbbm{M}_h(\hat\mu,\mu)\leq\rho \bigr\}.
        \tag{$\primal$} \;\;\;
    \end{equation}
\end{definition}
\cite{jiajin2025unifying} demonstrate that \eqref{eq:primal} unifies several celebrated uncertainty quantification models that employ a variety of discrepancies, including: 
generalized $\phi$-divergence
\citep{BenTalPhiDivergence,agrawal2021optimal},
Sinkhorn \citep{wang2025sinkhorn},
and 
traditional Wasserstein optimal transport
\citep{murthy2019quantifying,gao2022short}.
We refer the reader to \PaperSupplementSection{apx:expressiveness} for a perturbation approach to these topics as well as their unification.
Moreover, it is noted that \eqref{eq:primal} shares similarities with martingale optimal transport \citep{zhou2021martingale,li2022martingale}.

\subsection{CM-OT Duality and a Compactness Conjecture}
\label{sec: Conjecture}
In this section, we briefly summarize previous efforts from the literature in deriving a dual problem to \eqref{eq:primal}, including the decomposition of this duality into two relations. In particular, we present a conjecture posed by \cite{jiajin2025unifying}, which we address in the sections to follow via a perturbation perspective. 

\subsubsection{Previous Results}
Central to \cite{li2022martingale} and \cite{jiajin2025unifying} is the
study of the following dual(s) for \eqref{eq:primal}:
\begin{align}
    \dualPsiAnchor
    \eqref{eq:primal}
    &\leq
    \displaystyle\inf_{{\lambda\geq0,\,\psi\in\Psi}} \;\; \lambda \rho +
    \sup_{\gamma \in \Gamma_{\hat{\mu}}{\cap\Gamma_\W}} \;\; \mathbbm{E}_\gamma [
    f(V,W) - \psi(\hat V)\cdot (W-h(\hat V))  - \lambda c(\hat V,\hat W; V,W)
    ]
    \tag{$\pcw_\Psi$}\label{eq:dual_Psi}
    \\
    \dualPsiIPAnchor
    &\leq\;
    \displaystyle\inf_{{\lambda\geq0,\,\psi\in\Psi}} \;\; \lambda \rho +
    \mathbbm{E}_{\hat\mu}\Bigl[\;
    \sup_{v\in\V,w\in\W} 
    f(v,w) - \psi(\hat V)\cdot (w-h(\hat V))  - \lambda c(\hat V,\hat W; v,w)
    \;\Bigr]
    \tag{$\pcw_\Psi+\ip$}\label{eq:dual_Psi_IP},
\end{align}
where $\Psi \subseteq \R^\mathcal{V}$ is a set of $\mathcal{G}$-measurable, real valued functions defined on $\mathcal{V}$.
$\dualPsiRef$ is a convex (Lagrangian) dual problem to $\eqref{eq:primal}$, and $\dualPsiIPRef$ reflects an interchange of the 
$\sup$ and expectation operations in $\dualPsiRef$.
For a choice of $\Psi$, the \emph{CM-OT duality} $[\eqref{eq:primal} = \dualPsiIPRef]$
is equivalently the combination of two relations:
\begin{enumerate}[itemsep=0pt,topsep=0pt]
    \item[\textlabel{cond:dro_zerogap}{\textnormal{I.}}]
    $[\eqref{eq:primal} = \dualPsiRef]$ zero duality gap; and
    \item[\textlabel{cond:dro_IP}{\textnormal{II.}}] $[\dualPsiRef = \dualPsiIPRef]$  Interchangeability Principle (IP) (e.g.,
    \cite{rockafellar1971integral,gao2022short}).
\end{enumerate} 
This combination is
shown
under specific settings in both \cite{li2022martingale} and \cite{jiajin2025unifying}.

In particular, when $\hat\mu$ is a finitely-supported probability measure, \cite{li2022martingale} establish $[\eqref{eq:primal} = \dualPsiIPRef]$ for $\Psi$ a finite-dimensional vector space.
Their argument leverages finite dimensional interior conditions for semi-infinite conic LPs (see \cite{shapiro2001coniclp}).

As for the case of general $\hat\mu$, \cite{jiajin2025unifying} establish $[\eqref{eq:primal} =  \dualPsiIPRef]$ for $\Psi \equiv \mathcal C_b(\V)$ and $h\equiv1$, assuming:
\textlabel{assn:U_compact}{\textnormal{(i)}}
$\mathcal{U}$ is compact;
\textlabel{assn:f_closed}{\textnormal{(ii)}}
$f:\U\to\R$ is upper semicontinuous and $f\in L^1(\hat \mu)$; and
\textlabel{assn:c_closed}{\textnormal{(iii)}}
$c:\U\times\U\to(-\infty,+\infty]$ is lower semicontinuous and $c(u, u) = 0$ for all $u \in \mathcal{U}$.

\subsubsection{A Compactness Conjecture}
Assuming \ref{assn:U_compact}, compactness of $\U$, can pay great dividends.
Indeed, compactness not only makes available classical minimax theorems (such as \cite{Sion.58}) but also contributes to the existence of measurable selections \citep{jiajin2025unifying}, thereby
facilitating
both halves:
zero duality gap and IP.  
In fact, 
\cite{jiajin2025unifying}
suggest
that the compactness of $\mathcal{U}$
plays a crucial role:
\begin{quote}
    \emph{Without conditional moment constraints, a compactness condition akin to \ref{assn:U_compact} is not needed to establish strong duality, [\,...but...\,] we conjecture that [\,
    $[\eqref{eq:primal}=(\dualCbIP)]$, i.e.,
    Theorem 4.2 of \citet{jiajin2025unifying}\,\;] ceases to hold if \ref{assn:U_compact} is relaxed.
    }
\end{quote}

We now turn to address this conjecture.

\subsection{On the Role of Compactness} \label{sec: Counterexample}
In this section we present a class of problem instances illustrating that compactness cannot be omitted from the hypotheses of \cite{jiajin2025unifying}'s duality theorem, i.e.,
$[\neg\ref{assn:U_compact} \land \ref{assn:f_closed} \land \ref{assn:c_closed}] \nRightarrow [\eqref{eq:primal} = \dualPsiRef]$.
Notably, in constructing such a class we use an upper-semi-continuous objective function $f$ that is unbounded above—natural, since $f$ would otherwise be necessarily bounded above on $\U$ when $[\ref{assn:U_compact} \land \ref{assn:f_closed}]$ holds.

\begin{restatable}[Duality Gap without Compactness]{example}{unboundedfdualitygap}
    \label{lem:duality_gap_unbounded_f}
    Let
    $\V\coloneqq\R$ and 
    $\W\coloneqq\R_+$ be endowed with standard topologies,
    $\mathcal{F}$ be the product of their respective Borel $\sigma$-fields,
    $f(v,w)\coloneqq v\cdot(w-1)$,
    $h(\hat V)\equiv 1$,
    $\rho>0$,
    $c(\hat v,\hat w;v,w)$ take value $+\infty$ if $v\neq \hat v$
    and zero otherwise.
    Let
    $\hat\nu \in \mathcal{P}(\V)$ and not essentially bounded above, i.e.,
    $\hat\nu(\hat V>t)>0$ for all $t\in\R$, and let
    $\hat\mu \coloneqq \hat\nu\otimes\delta_1$ be the product measure of $\hat\nu$ with the point-mass on $1$.
    Finally let $\Psi\coloneqq L^\infty(\hat\nu)$.
    Then
    \(
        0=\eqref{eq:primal}
        <
        \dualPsiRef
        =
        +\infty;
    \)
    moreover, $\ref{assn:f_closed} \land \ref{assn:c_closed}$ holds.
\end{restatable}

\cref{lem:duality_gap_unbounded_f}, with additional detail in \PaperSupplementSection{apx:example_detail}, thus technically resolves the conjecture of \cite{jiajin2025unifying}; however \ref{assn:U_compact} is not necessary, as we next introduce perturbation-based conditions that replace compactness for the zero duality gap half of CM-OT duality.

\subsection{Circumventing Compactness: Zero Duality Gap $[\eqref{eq:primal} = \dualPsiRef]$}
\label{sec:Closing Duality Gap}

As demonstrated in the previous section, zero duality gap
$[\eqref{eq:primal}=\dualPsiRef]$ may fail to hold when the compactness of $\U$ \ref{assn:U_compact} is relaxed, motivating the search for an appropriate replacement.
Unfortunately, the classical Slater's condition \cref{prop:convex_duality}\ref{cond:slater} often fails for infinite dimensional problems as $\intr_{\mathcal{M}(\U)} \mathcal{P}(\U)=\varnothing$, limiting its utility.
As an alternative, we propose two conditions (\cref{assn:**a,assn:**b}) to circumvent the compactness condition \ref{assn:U_compact}, with discussion and motivating examples to follow, that will be shown to be sufficient for zero duality gap.  
\begin{assumption}
    \label{assn:**a}
    There exist $(\hat V, \hat W, V^0, W^0) \sim \gamma^0 \in \Gamma_{\hat\mu}\cap\Gamma_\W$
    s.t.
    \begin{align*}
        a \coloneqq \rho - \mathbbm{E}_{\gamma^0}[c] > 0,\quad
        \mathbbm{E}_{\gamma^0}[W^0  \mid \hat V] - h(\hat V) = 0, \;\;\hat\nu \text{-a.s.}
        \quad\text{and}\quad
        \mathbbm{E}_{\gamma^0}[f^-(V^0, W^0)] <+\infty.
    \end{align*}
\end{assumption}
\begin{assumption}
    \label{assn:**b}
    There exists $b>0$ and $(\hat V, \hat W, V^+, W^+) \sim \gamma^+, (\hat V, \hat W, V^-, W^-) \sim \gamma^- \in \Gamma_{\hat\mu}\cap \Gamma_\mathcal{W}$
    s.t.
    \begin{align*}
        \mathbbm{E}_{\gamma^+}[W^+ \mid \hat V] - h(\hat V) \geq b,
        \;\;
        \mathbbm{E}_{\gamma^-}[W^- \mid \hat V] - h(\hat V) \leq -b,
        \;\;
        \mathbbm{E}_{\gamma^\pm}[c]\leq\rho,
        \;\;\text{and}\;\;
        \mathbbm{E}_{\gamma^\pm}[f^-(V^\pm, W^\pm)]<+\infty.
    \end{align*}
\end{assumption}
For the final assumption, let
$\mathcal{N}_{\delta}
    \coloneqq
    \bigl\{
        \gamma\in\Gamma_{\hat\mu}\cap\Gamma_W:
        \mathbb{E}_{\gamma}[c]\leq \rho+\delta,
        \;\;
        \|\mathbb{E}_{\gamma}
    \bigl[W-h(\hat V)\mid \hat V\bigr]\|_{L^1(\hat\nu)}\leq\delta
    \bigr\}$
denote the family of couplings 
feasible to perturbed (by $\delta > 0$) forms of the constraints in \eqref{eq:primal_explicit}.
For each $\gamma\in\Gamma_{\widehat\mu}\cap\Gamma_W$, define
\(
    f_{\gamma}^+(\hat v)
    \coloneqq
    \mathbb{E}_{\gamma}
    [f^+(V,W)\mid \hat V=\hat v].
\) 
\begin{assumption}[Local conditional uniform integrability]
\label{ass:local-conditional-ui}
There exists $\delta>0$ such that the family of conditional expectation functions $\left\{
        f_\gamma^+:
        \gamma\in\mathcal{N}_{\delta}
    \right\}$
is uniformly integrable in $L^1(\widehat\nu)$. That is,
\[
    \sup_{\gamma\in\mathcal{N}_{\delta}}
    \int_V
        f_\gamma^+(\widehat v)\,
        \widehat\nu(d\widehat v)
    <+\infty
~~\text{ and }~~
    \lim_{\epsilon\downarrow0}
    \;
    \sup_{\gamma\in\mathcal{N}_{\delta}}
    \;
    \sup_{\substack{G\in\mathcal{G}\\
                    \widehat\nu(G)\leq\epsilon}}
    \int_G
        f_\gamma^+(\widehat v)\,
        \widehat\nu(d\widehat v)
    =0.
\]
\end{assumption}
Assumptions \ref{assn:**a}, \ref{assn:**b}, and \ref{ass:local-conditional-ui} will now be discussed in turn. 
We remark that they collectively are (critically) distinct from the Slater condition of \cref{prop:convex_duality}\ref{cond:slater}, as evidenced by \cref{ex:not_slater} with detail in \PaperSupplementSection{apx:A1A2}.

\subsubsection{On \texorpdfstring{\cref{assn:**a}}{Assumption 1}}
\label{sec:on_assn_1}
So long as $\eqref{eq:primal}$ is
feasible, then \cref{assn:**a} will hold in the case that $\rho$ is replaced by $\rho+\epsilon$ for any $\epsilon>0$.
This is useful since many works treat
$\rho$ as a tunable parameter representing a decision maker's prescribed level of conservatism
\citep{bayraksan_love_2015,esfahani_kuhn_2018,kuhn_esfahani_nguyen_shafiee_2019,murthy2019quantifying,rahimian_mehrotra_2022,aolaritei_shafiee_dorfler_2026}; hence, in this respect the viability of \cref{assn:**a} (and duality) may also be of consideration to said decision maker. 

An immediate consequence of \cref{assn:**a} is that it affords a reformulation of the feasible region to $\eqref{eq:primal}$. A similar reformulation is obtained in \cite{jiajin2025unifying} but under a different set of assumptions;
in contrast, we leverage a simple ``mixing argument," deferred to \cref{apx: CM-OT Proofs}

\begin{restatable}{lemma}{AssumptionOneLemma}
    \phantomsection\label{lem:v1=v2}
    Define the explicit problem over couplings:
    \begin{align}
    \label{eq:primal_explicit}
            \sup\bigl\{  \mathbbm{E}_\gamma[f(V,W)] : 
            \gamma\in\Gamma_{\hat\mu}\cap\Gamma_\W,\;
            \mathbbm{E}_\gamma[c]\leq\rho,\;\text{and}\;\;
            \mathbbm{E}_\gamma[W\mid\hat V]=h(\hat V),\;\text{$\hat\nu$-a.s.}
            \bigr\}
        \tag{\primalExplicit}
    \end{align}
    Under \cref{assn:**a}, it holds that $\eqref{eq:primal}=\eqref{eq:primal_explicit}$.
\end{restatable}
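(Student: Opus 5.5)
The inequality $\eqref{eq:primal_explicit}\le\eqref{eq:primal}$ is immediate. The reverse inequality is where \cref{assn:**a} enters: I would use a mixing argument that turns nearly optimal couplings into exactly feasible ones.

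\emph{Easy direction.} Take any $\gamma$ feasible for \eqref{eq:primal_explicit} and let $\mu\coloneqq\projection_{3,4}\gamma$. Then $\gamma\in\Gamma(\hat\mu,\mu)\cap\Gamma_\W$ satisfies the conditional moment constraint, so $\mathbbm{M}_h(\hat\mu,\mu)\le\mathbbm{E}_\gamma[c]\le\rho$. Moreover $\mathbbm{E}_\mu[f]=\mathbbm{E}_\gamma[f(V,W)]$, so $\mu$ is feasible for \eqref{eq:primal} with the same value.

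\emph{Reverse direction.} Fix $\mu$ with $\mathbbm{M}_h(\hat\mu,\mu)\le\rho$ and let $\epsilon>0$. Because $\mathbbm{M}_h$ is an infimum that need not be attained, there is $\gamma^\epsilon\in\Gamma(\hat\mu,\mu)\cap\Gamma_\W$ with $\mathbbm{E}_{\gamma^\epsilon}[W\mid\hat V]=h(\hat V)$ $\hat\nu$-a.s. and $\mathbbm{E}_{\gamma^\epsilon}[c]\le\rho+\epsilon$. Let $\gamma^0$ be as in \cref{assn:**a}, and set
\[
\gamma_t\coloneqq(1-t)\gamma^\epsilon+t\gamma^0,\qquad t\coloneqq\epsilon/(\epsilon+a)\in(0,1).
\]

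\emph{Feasibility of the mixture.} Four properties need checking.
\begin{enumerate}[label=(\alph*),itemsep=0pt,topsep=0pt]
\item $\gamma_t\in\Gamma_{\hat\mu}$: its first marginal is still $\hat\mu$, and its second marginal is a probability measure.
\item $\gamma_t\in\Gamma_\W$: the set is convex, since the integrability of $\abs{W}$ is linear in $\gamma$.
\item Cost: $\mathbbm{E}_{\gamma_t}[c]\le(1-t)(\rho+\epsilon)+t(\rho-a)=\rho+\epsilon-t(\epsilon+a)=\rho$. Here $\inf c>-\infty$ makes the expectations well defined.
\item Conditional moment: both components have the same $\hat V$-marginal $\hat\nu$. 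For any $G\in\mathcal G$, the quantity $\mathbbm{E}_{\gamma_t}[W\ind_G(\hat V)]$ is the corresponding convex combination of the two components' values. Hence $\mathbbm{E}_{\gamma_t}[W\mid\hat V]=(1-t)\mathbbm{E}_{\gamma^\epsilon}[W\mid\hat V]+t\,\mathbbm{E}_{\gamma^0}[W^0\mid\hat V]=h(\hat V)$ $\hat\nu$-a.s.
\end{enumerate}
So $\gamma_t$ is feasible for \eqref{eq:primal_explicit}.

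\emph{Objective bound.} The objective is affine in $\gamma$, giving
\[
\mathbbm{E}_{\gamma_t}[f]\ge(1-t)\,\mathbbm{E}_\mu[f]-t\,\mathbbm{E}_{\gamma^0}[f^-(V^0,W^0)].
\]
This uses $\mathbbm{E}_{\gamma^0}[f]\ge-\mathbbm{E}_{\gamma^0}[f^-]>-\infty$, which is exactly the finiteness in \cref{assn:**a}. Letting $\epsilon\downarrow0$ forces $t\downarrow0$, so $\eqref{eq:primal_explicit}\ge\mathbbm{E}_\mu[f]$. Taking the supremum over $\mu$ gives the claim.

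\emph{Expected obstacle.} The main difficulty is extended-real bookkeeping, specifically how $\mathbbm{E}_\mu[f]$ is defined when both $f^\pm$ have infinite $\mu$-integrals. I would adopt the same convention as the paper, e.g. the value $-\infty$, or restrict to $\mu$ with $\mathbbm{E}_\mu[f^-]<\infty$; such $\mu$ contribute nothing to the supremum otherwise. With that convention, the case $\mathbbm{E}_\mu[f^+]=+\infty$ with $\mathbbm{E}_\mu[f^-]<\infty$ is handled by the same mixture, which then gives $\mathbbm{E}_{\gamma_t}[f]=+\infty$. The remaining point to verify is the conditional-expectation linearity in (d), which follows from the shared first marginal.
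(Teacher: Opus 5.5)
Your proposal is correct and follows the paper's argument: mix the near-feasible coupling $\gamma^\epsilon$ with the strictly feasible $\gamma^0$ from \cref{assn:**a} at weight $t=\epsilon/(\epsilon+a)$. This restores $\mathbbm{E}[c]\le\rho$, keeps the conditional moment constraint, and costs only $O(t)$ in objective because $\mathbbm{E}_{\gamma^0}[f^-]<+\infty$. The one difference is organizational: you fix a feasible $\mu$ and send $\epsilon\downarrow0$ instead of choosing $\epsilon$-optimal $\mu_\epsilon$, which handles $\eqref{eq:primal}\in\R$ and $\eqref{eq:primal}=+\infty$ together, whereas the paper treats these cases separately.
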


\subsubsection{On \texorpdfstring{\cref{assn:**b}}{Assumption 2}}

We now provide a sufficient condition for verifying \cref{assn:**b} based on \cref{assn:**a} together with some regularity of transport cost $c$ and objective $f$.

\begin{restatable}[Sufficient condition for \cref{assn:**b}]{lemma}{AssumptionTwoLemma}
    \label{lem:**a_implies_**b}
    Suppose $\gamma^0$ satisfies \cref{assn:**a}.
    If  there exists $\beta > 0$ such that:
    \textlabel{assn:h_W_int}{\textnormal{(a)}} 
    $h(\hat V) \pm \beta\in \W$, $\hat\nu$-a.s.; and 
    \textlabel{assn:b}{\textnormal{(b)}}
    \[
    \mathbbm{E}_{\gamma^0}[c(\hat V, \hat W, V^0, h(\hat V) \pm \beta)] < +\infty
    \quad\text{and}\quad
    \mathbbm{E}_{\gamma^0}[f(V^0, h(\hat V) \pm \beta)]>-\infty,
    \]
    then \cref{assn:**b} holds. 
    In particular, if \ref{assn:h_W_int} holds, and in addition,
    $c(\hat v,\hat w,v,\cdot)$ and $f(v,\cdot)$ are $L$-Lipschitz, then \cref{assn:**b} holds.
\end{restatable}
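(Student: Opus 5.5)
The idea is to build $\gamma^\pm$ by \emph{mixing} $\gamma^0$ with a coupling that moves the $W$-coordinate to $h(\hat V)\pm\beta$. Concretely, let $\gamma^{0,\pm}$ be the law of $(\hat V,\hat W,V^0,h(\hat V)\pm\beta)$ when $(\hat V,\hat W,V^0,W^0)\sim\gamma^0$. This is the pushforward of $\gamma^0$ under a measurable map, so its first marginal is still $\hat\mu$, and $\gamma^{0,\pm}\in\Gamma_{\hat\mu}$. By \ref{assn:h_W_int}, the new $W$-coordinate lies in $\W$ a.s. Since $h\in L^1(\hat\nu)$, we have $\mathbbm{E}\abs{h(\hat V)\pm\beta}<\infty$, so $\gamma^{0,\pm}\in\Gamma_\W$. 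Moreover $\mathbbm{E}_{\gamma^{0,\pm}}[W\mid\hat V]-h(\hat V)=\pm\beta$.

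For $t\in(0,1]$, define the mixture $\gamma^\pm_t\coloneqq(1-t)\gamma^0+t\gamma^{0,\pm}$. Both components have first marginal $\hat\mu$, so the conditional expectation given $\hat V$ is affine in the mixing weight: $\mathbbm{E}_{\gamma^\pm_t}[W\mid\hat V]-h(\hat V)=(1-t)\cdot 0+t(\pm\beta)=\pm t\beta$. I would verify this with a one-line computation against test functions $g(\hat V)$, using \cref{assn:**a}'s constraint on $\gamma^0$.

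The cost is also affine in $t$: $\mathbbm{E}_{\gamma^\pm_t}[c]=(1-t)(\rho-a)+t\,C^\pm$, where $C^\pm\coloneqq\mathbbm{E}_{\gamma^0}[c(\hat V,\hat W,V^0,h(\hat V)\pm\beta)]$. We have $C^\pm<+\infty$ by \ref{assn:b} and $C^\pm>-\infty$ since $\inf c>-\infty$. Hence $\mathbbm{E}_{\gamma_t^\pm}[c]\le\rho$ for all sufficiently small $t>0$; explicitly, any $t\le a/(a+(C^\pm-\rho+a)^+)$ works.

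For the objective, $f^-$ is convex in the mixture, so $\mathbbm{E}_{\gamma^\pm_t}[f^-]=(1-t)\mathbbm{E}_{\gamma^0}[f^-]+t\,\mathbbm{E}_{\gamma^0}[f^-(V^0,h(\hat V)\pm\beta)]$. The first term is finite by \cref{assn:**a}. The second is where I expect the only real subtlety: \ref{assn:b} only says $\mathbbm{E}[f(\cdot)]>-\infty$. I would read this, as the paper's convention intends, as the integral of $f^-$ being finite, since an expectation that is well defined and $>-\infty$ requires $\mathbbm{E}f^-<\infty$. Fixing a common small $t$ and setting $b\coloneqq t\beta>0$ gives $\gamma^\pm\coloneqq\gamma^\pm_t$ satisfying \cref{assn:**b}.

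For the Lipschitz addendum, it suffices to derive \ref{assn:b} from \ref{assn:h_W_int}. Lipschitz continuity in $w$ gives $c(\hat v,\hat w,v,h\pm\beta)\le c(\hat v,\hat w,v,w^0)+L\abs{w^0-h\pm\beta}$. Taking expectations under $\gamma^0$, the right-hand side is finite because $\mathbbm{E}_{\gamma^0}[c]<\rho$, $\gamma^0\in\Gamma_\W$, and $h\in L^1(\hat\nu)$. Likewise $f(v,h\pm\beta)\ge f(v,w^0)-L\abs{w^0-h\pm\beta}$, whose negative part is integrable by $\mathbbm{E}_{\gamma^0}[f^-]<\infty$ and the same integrability. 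This gives $\mathbbm{E}f^->-\infty$ in the required sense, and the first part of the lemma then applies.
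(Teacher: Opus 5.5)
Your proposal is correct and follows essentially the same route as the paper: push $\gamma^0$ forward so the $W$-coordinate becomes $h(\hat V)\pm\beta$, mix it with $\gamma^0$ using a small enough weight that the cost stays at most $\rho$, and obtain the Lipschitz addendum from the same triangle-inequality bounds on $c$ and $f^-$. The only differences are cosmetic. You use one common weight $t$ for both signs, whereas the paper takes separate $t_\pm$ and sets $b=\beta\min\{t_+,t_-\}$. Also, $\mathbbm{E}[f^-]$ is linear in the mixture, not merely convex, and linearity is exactly what your displayed identity uses.
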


\cref{assn:**b} provides reference measures with which an infeasible measure can be mixed to correct violations of the conditional moment constraint. This correction mechanism is central to our proof strategy: it ensures that vanishing perturbations of \eqref{eq:primal_explicit} admit well-behaved feasible approximations.

\subsubsection{On \Cref{ass:local-conditional-ui}} \label{sec:on_assn_3}
\Cref{ass:local-conditional-ui} is a tail-control condition that can be verified through familiar growth conditions relating the objective function to the transportation cost.
\begin{lemma}[Cost-dominated growth]
\label{lem:subcost-lcui}
If for all $\eta>0$, there exists
$a_\eta\in L^1_+(\hat\mu)$ such that, for
$\hat\mu$-a.e.~$\hat u\in\U$,
\begin{equation*} \label{eq: CostDominationCondition}
f^+(u)\leq a_\eta(\hat u)+\eta c(\hat u;u)
\qquad
\text{for every $u\in\U$ with $c(\hat u;u)<+\infty$,}
\end{equation*}
then \Cref{ass:local-conditional-ui} holds. 
\end{lemma}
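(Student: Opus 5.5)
The plan is to use the standard de la Vallée-Poussin style splitting. The cost term is controlled by the budget $\rho+\delta$, and the integrable envelope $a_\eta$ is controlled by absolute continuity of its integral. Fix any $\delta>0$ (say $\delta=1$) and take $\gamma\in\mathcal N_\delta$. Since $\mathbbm{E}_\gamma[c]\le\rho+\delta<\infty$, we have $c(\hat U;U)<+\infty$ $\gamma$-a.s. Because the $\hat U$-marginal of $\gamma$ is $\hat\mu$, the hypothesis applies $\gamma$-a.s. and gives
\[
f^+(V,W)\le a_\eta(\hat V,\hat W)+\eta\, c(\hat V,\hat W;V,W),\qquad \gamma\text{-a.s.}
\]
One technical point must be handled first: $c$ may be negative, although $\inf c>-\infty$. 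Write $c_0\coloneqq \inf c$ and use $c\le c-c_0+|c_0|$. Concretely, I would work with $\tilde c\coloneqq c-c_0\ge 0$, which satisfies $\eta c\le \eta\tilde c+\eta|c_0|$, and then $\mathbbm{E}_\gamma[\tilde c]\le \rho+\delta-c_0=:K$ uniformly over $\mathcal N_\delta$.

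Next, take conditional expectations given $\hat V$ to get
\[
f^+_\gamma(\hat V)\le \mathbbm{E}_{\hat\mu}[a_\eta\mid\hat V]+\eta\,\mathbbm{E}_\gamma[\tilde c\mid\hat V]+\eta|c_0|.
\]
The first term uses the fact that $a_\eta$ depends only on $\hat U$, whose law is $\hat\mu$, so its conditional expectation is a fixed function $g_\eta\in L^1_+(\hat\nu)$ that does not depend on $\gamma$. This inequality holds $\hat\nu$-a.s., and all terms are nonnegative, so integration over sets is legitimate.

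Boundedness follows by taking $\eta=1$ and integrating over $\V$:
\[
\sup_{\gamma\in\mathcal N_\delta}\int f_\gamma^+\,d\hat\nu\le \|g_1\|_{L^1}+K+|c_0|<\infty.
\]
For uniform integrability, fix $\varepsilon'>0$ and choose $\eta$ with $\eta(K+|c_0|)\le\varepsilon'/2$. For any $G\in\mathcal G$,
\[
\int_G f^+_\gamma\,d\hat\nu\le \int_G g_\eta\,d\hat\nu+\eta\,\mathbbm{E}_\gamma[\tilde c]+\eta|c_0|\le \int_G g_\eta\,d\hat\nu+\varepsilon'/2,
\]
where we bound $\int_G \mathbbm{E}_\gamma[\tilde c\mid\hat V]\,d\hat\nu\le\mathbbm{E}_\gamma[\tilde c]$ using nonnegativity. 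Since $g_\eta$ is a single integrable function, absolute continuity of the integral gives $\epsilon_0>0$ such that $\hat\nu(G)\le\epsilon_0$ implies $\int_G g_\eta\le\varepsilon'/2$. Hence the double supremum is at most $\varepsilon'$ for all $\epsilon\le\epsilon_0$, which proves the limit claim.

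The only delicate step is the measure-theoretic bookkeeping rather than any estimate. It consists of three parts. First, conditional expectations of possibly non-integrable nonnegative quantities must be defined, which is fine for nonnegative variables in the extended sense. Second, the $\hat\mu$-a.e. hypothesis has to transfer to a $\gamma$-a.s. statement, which follows because $\projection_{\hat U}\gamma=\hat\mu$. Third, the shift by $\inf c$ has to be handled when $c$ takes negative values. Note also that the conditional moment constraint in $\mathcal N_\delta$ is never used; only the cost budget matters.
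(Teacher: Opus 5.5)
Your proof is correct and follows essentially the same route as the paper's: integrate the pointwise domination over $\{\hat V\in G\}$, bound the cost contribution by the budget $\rho+\delta$, take $\eta$ small, and apply absolute continuity of the integral of the single function $a_\eta$. Your $g_\eta=\mathbbm{E}_{\hat\mu}[a_\eta\mid\hat V]$ is only a repackaging of the paper's $\mathbbm{E}_{\hat\mu}[a_\eta;\hat V\in G]$, and your explicit shift by $\inf c$ is a small improvement in care, since the paper's bound $\eta(\rho+\delta)$ tacitly assumes $c\geq 0$.
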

\begin{proof}
Let $\delta>0$ and observe that for all $\gamma\in\mathcal N_\delta$, $G\in\mathcal G$,
\[
\int_G f_\gamma^+\,d\hat\nu
=
\mathbbm E_\gamma[f^+(V,W);\hat V\in G]
\leq
\mathbbm E_{\hat\mu}[a_\eta(\hat U);\hat V\in G]
+\eta(\rho+\delta).
\]
Uniform $L^1$-boundedness follows.
As for uniform integrability, given any $\epsilon>0$,
$\mathbbm E_{\hat\mu}[a_\eta(\hat U);\hat V\in G] < \epsilon/2$ for all $G$ of sufficiently small $\hat \nu$ measure, and $\eta>0$ can be chosen so that
$\eta(\rho+\delta)<\epsilon/2$. 
\end{proof}
When $\U$ lies in a normed vector space and
$c(\hat u;u)\geq\kappa\norm{u-\hat u}^p$ for some $\kappa,p>0$, then \Cref{lem:subcost-lcui} accommodates any loss $f$ in which 
$f^+(u)\leq g(\hat u)+\omega(\norm{u-\hat u}),$ for some $g\in L^1_+(\hat\mu)$ and $
\omega(r)=o(r^p);$ indeed, then $a_\eta = g + \sup_{r\geq0}
\{\omega(r)-\eta\kappa r^p\}$. In particular, \Cref{lem:subcost-lcui} then accommodates:

\begin{enumerate}[label=\textnormal{(\alph*)},itemsep=2pt,topsep=2pt]
    \item
    Polynomial growing losses:
    $f^+(u)\leq K(1+\norm{u}^q)$ with $0<q<p$ and
    $\mathbbm E_{\hat\mu}[\norm{\hat U}^q]<+\infty$;
    \\
    To verify this, see that we can take
    $g(\hat u)
    =
    K\bigl(1+2^{\max\{q-1,0\}}\norm{\hat u}^q\bigr)$ and $\omega(r)=K2^{\max\{q-1,0\}}r^q$.
    \item
    Globally Lipschitz losses: if $p>1$, $f$ is globally
    $L$-Lipschitz, and $\mathbbm E_{\hat\mu}[f^+(\hat U)]<+\infty$.
    \\
    To verify this, see that we can take $g(\hat u)=f^+(\hat u)$ and $\omega(r)=Lr$.
\end{enumerate}

\subsubsection{Assumptions \ref{assn:**a}, \ref{assn:**b} and Duality}
We make three remarks.
First, the recent OT-, $\phi$-, and Sinkhorn-DRO models that are unified under CM-OT all can satisfy \cref{assn:**a,assn:**b} in a natural way, as discussed in \cref{apx:expressiveness}.
Second, it is possible that Assumptions \ref{assn:**a}, \ref{assn:**b}, and \cref{lem:**a_implies_**b} hold yet the Slater condition fails (see \cref{ex:not_slater} in \cref{apx:A1A2}). Third, it can be readily seen that \cref{assn:**a} alone is insufficient for obtaining zero duality gap; see \cref{ex:duality_gap_a-not-b} in \PaperSupplementSection{apx:A1A2}.

\subsubsection{Zero Duality Gap}
\label{sec:main_result}
We now state our zero duality gap result.
Our theorem replaces the compactness of $\U$ \ref{assn:U_compact} and the semicontintuity of $f,c$ (\ref{assn:f_closed} and \ref{assn:c_closed}) with Assumptions \ref{assn:**a},\ref{assn:**b} and \ref{ass:local-conditional-ui}. 
Lemmas \ref{lem:**a_implies_**b} and \ref{lem:subcost-lcui} demonstrate how nonrestrictive this substitution is. 
Further, we will assume $(\V, \mathcal{G}, \hat\nu)$ has the regular conditional probability (RCP) property \citep{faden1985rcp}, which 
is mild, e.g. every Polish space equipped with its Borel $\sigma$-algebra satisfies this.

\begin{restatable}{theorem}{MainResult}
    \label{prop:main_result}
    In the above setting, if
    Assumptions \ref{assn:**a}, \ref{assn:**b} and \ref{ass:local-conditional-ui} hold, and $(\V, \mathcal{G}, \hat\nu)$ has the
    regular conditional probability property
    \citep{faden1985rcp},
    then
    $
    \eqref{eq:primal}
    = \dualPsiRef
    $, with $\Psi\coloneqq L^\infty(\hat \nu).$
\end{restatable}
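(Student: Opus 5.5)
By \cref{lem:v1=v2}, \cref{assn:**a} gives $\eqref{eq:primal}=\eqref{eq:primal_explicit}$, so it suffices to compare \eqref{eq:primal_explicit} with $\dualPsiRef$. We embed \eqref{eq:primal_explicit} in the perturbation scheme of \cref{sec:bifunction}, with decision variable $\gamma\in\Gamma_{\hat\mu}\cap\Gamma_\W$ and perturbation $u=(s,g)\in\R\times L^1(\hat\nu)$. For the (concave, maximization) bifunction we take
\[
G(s,g;\gamma)\coloneqq \mathbbm{E}_\gamma[f]-\iota\{\mathbbm{E}_\gamma[c]\le\rho+s\}-\iota\{\mathbbm{E}_\gamma[W-h(\hat V)\mid\hat V]=g\},
\]
and set $p(s,g)\coloneqq\sup_\gamma G(s,g;\gamma)$. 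Convexity of $\Gamma_{\hat\mu}\cap\Gamma_\W$ and linearity of $\gamma\mapsto\mathbbm{E}_\gamma[\,\cdot\,]$ make $p$ concave. The RCP property lets us write $\mathbbm{E}_\gamma[\,\cdot\mid\hat V]$ through disintegrations, so the conditional moment is a genuine element of $L^1(\hat\nu)$ and $g\mapsto p(s,g)$ is well defined. Because $(L^1(\hat\nu))^*=L^\infty(\hat\nu)$, the Lagrangian dual computed from $G$ with multipliers $(\lambda,\psi)\in\R_+\times L^\infty(\hat\nu)$ is exactly $\dualPsiRef$. Indeed, $\mathbbm{E}_{\hat\nu}[\psi(\hat V)\,\mathbbm{E}_\gamma[W-h\mid\hat V]]=\mathbbm{E}_\gamma[\psi(\hat V)(W-h(\hat V))]$ by the tower property, which is legitimate since $\gamma\in\Gamma_\W$ and $h\in L^1$. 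By the perturbation identities of \cref{sec:bifunction}, $\dualPsiRef=\cl p(0)$ (upper closure), so zero duality gap is equivalent to upper semicontinuity of $p$ at $0$ in the norm of $\R\times L^1(\hat\nu)$. Equivalently, for every sequence $(s_k,g_k)\to0$ and near-optimal $\gamma_k$ feasible for the $(s_k,g_k)$-perturbed problem, we need $\limsup_k\mathbbm{E}_{\gamma_k}[f]\le \eqref{eq:primal_explicit}$.

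We prove this upper semicontinuity with a mixing argument. Given $\gamma_k$ with violation $g_k$ and budget slack $s_k$, we build a feasible coupling
\[
\tilde\gamma_k \coloneqq (1-\theta_k-\alpha_k)\gamma_k+\theta_k\gamma^0+\alpha_k\bigl(\text{conditional mixture of }\gamma^+,\gamma^-\bigr).
\]
The mixing is done conditionally on $\hat V$: using the RCP property, we glue kernels with $\hat V$-dependent weights so that the $\hat\mu$-marginal is preserved. The weights are chosen as follows. Where $g_k(\hat v)>0$, we mix in $\gamma^-$ with weight proportional to $g_k(\hat v)/b$; where $g_k(\hat v)<0$, we mix in $\gamma^+$ likewise. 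Since \cref{assn:**b} guarantees a conditional moment gap of at least $b$, with exact adjustment by rebalancing against $\gamma^0$ this restores $\mathbbm{E}[W-h\mid\hat V]=0$ $\hat\nu$-a.s. A small weight $\theta_k$ on $\gamma^0$, which has slack $a>0$ by \cref{assn:**a}, then absorbs both the budget excess $s_k$ and the cost contributed by $\gamma^\pm$ (which have $\mathbbm{E}[c]\le\rho$). With this choice, $\theta_k\to0$ as $\|(s_k,g_k)\|\to0$.

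One problem is that the weights $g_k(\hat v)/b$ need not be at most $1$ pointwise. We handle this by truncating on the set $\{|g_k|>\epsilon b\}$, whose $\hat\nu$-measure tends to $0$ by Markov's inequality. On that small set, we replace $\gamma_k$ conditionally by $\gamma^0$ entirely.

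The main obstacle is controlling the objective: we must show $\mathbbm{E}_{\tilde\gamma_k}[f]\ge\mathbbm{E}_{\gamma_k}[f]-o(1)$. The parts coming from $\gamma^0$ and $\gamma^\pm$ are harmless, because their $f^-$ is integrable by assumption and their weights vanish. The difficulty is the mass of $\gamma_k$ removed on sets $G_k$ of vanishing $\hat\nu$-measure, plus an overall factor $(1-\theta_k-\alpha_k)$; this can lose $\int_{G_k}f^+_{\gamma_k}\,d\hat\nu$, which is not obviously small. This is exactly where \cref{ass:local-conditional-ui} enters. For large $k$ we have $\gamma_k\in\mathcal N_\delta$, so the $L^1$-boundedness of $f^+_{\gamma_k}$ controls the global factor, and uniform integrability controls the loss on $G_k$. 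Removing $f^-$-mass only increases the objective, so it needs no control. Combining these gives $\limsup p(s_k,g_k)\le p(0)$, hence $\cl p(0)=p(0)$, and therefore $\eqref{eq:primal}=\eqref{eq:primal_explicit}=\dualPsiRef$.
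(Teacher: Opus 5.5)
Your proposal is correct and is essentially the paper's proof. It uses the same $\R\times L^1(\hat\nu)$ perturbation of \eqref{eq:primal_explicit} with dual \eqref{eq:dual_Psi}, and proves upper semicontinuity at $0$ by mixing with $\gamma^0$ for budget slack, RCP-based conditional mixing with $\gamma^\pm$ to restore the moment constraint, and \cref{ass:local-conditional-ui} to control the discarded $f^+$-mass. The remaining differences are bookkeeping: the paper's exact weights $t_r^\pm=\max\{0,\mp A\bar\gamma^r\}/(|A\bar\gamma^r|+|A\gamma^\pm|)$ lie in $[0,1]$ automatically, so it needs no truncation, and it reserves a fixed slack $a\kappa_\epsilon$ rather than a vanishing $\theta_k$, bounding the $\hat v$-dependent cost added by $\gamma^\pm$ through absolute continuity of $\hat v\mapsto\int c\,d\gamma^\pm_{\hat v}$. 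Note that the global bound $\mathbbm{E}_{\gamma^\pm}[c]\le\rho$ controls this cost only through your pointwise bound $\epsilon$ on the weights, which yields $\theta_k=O(\epsilon)+o(1)$ rather than $\theta_k\to0$; this is still enough once $\epsilon\downarrow0$.
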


\subsection{Circumventing Compactness: Interchangeability Principle $[\dualPsiRef = \dualPsiIPRef]$}
\label{sec:InterchangeabilityPrinciple}

The zero duality gap theorem, \cref{prop:main_result}, gives $\eqref{eq:primal}=\dualPsiRef$ when $\Psi=L^\infty(\hat\nu)$.
To complete CM-OT duality, it remains to establish $\dualPsiRef=\dualPsiIPRef$. 
We accomplish this in two steps: 
first, \cref{thm:Cb_Linfty} shows that $(\dualLinfty) = (\dualCb)$
under mild topological assumptions on $\V$ and
mild uniform integrability conditions on the random variable $W$;
second, 
once the dual variables are continuous, 
\cref{prop:ip} applies standard normal integrand theory to obtain the desired IP.

\begin{restatable}{theorem}{LInfinityToCb}
 \label{thm:Cb_Linfty}
    If $\mathcal V$ is a Polish space, $\mathcal{G}$ its Borel $\sigma$-algebra, and at least one of
the following holds:
\begin{enumerate}[label=\textnormal{(\roman*)},itemsep=2pt,topsep=2pt]
    \item The family $\bigl\{ \mathbbm E_\gamma[\abs{W}\mid\hat V=\cdot]: \gamma\in\Gamma_{\hat\mu}\cap\Gamma_\W,\ \mathbbm E_\gamma[c]<+\infty \bigr\} \subseteq L^1_+(\hat\nu)$  is uniformly integrable;

    \item
    For every $\eta>0$, there exists $a_\eta\in L^1_+(\hat\mu)$ such that $\abs{w} \leq a_\eta(\hat u) + \eta c(\hat u;u)$ for $\hat\mu$-a.e.~$\hat u$ and every $u\in\U$ with $c(\hat u;u)<+\infty$;
    
\end{enumerate}
then $(\dualLinfty)=(\dualCb)$. 
\end{restatable}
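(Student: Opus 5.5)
The inequality $(\dualLinfty)\leq(\dualCb)$ is immediate. Every $\psi\in\mathcal C_b(\V)$ defines an element of $L^\infty(\hat\nu)$, and the inner supremum depends on $\psi$ only through its $\hat\nu$-class, so the infimum in $(\dualCb)$ is taken over a subset of the dual variables of $(\dualLinfty)$. The work lies in the reverse inequality. Fix a feasible pair $(\lambda,\psi)\in\R_+\times L^\infty(\hat\nu)$ with finite dual value, and fix $\epsilon>0$. I will produce $(\lambda',\psi_\epsilon)\in\R_+\times\mathcal C_b(\V)$ whose dual value exceeds that of $(\lambda,\psi)$ by at most $O(\epsilon)$.

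\emph{Step 1 (Lusin approximation).} Because $\V$ is Polish and $\hat\nu$ is a finite Borel measure, $\hat\nu$ is Radon. Lusin's theorem, combined with Tietze extension and truncation at level $\|\psi\|_\infty$, then gives, for every $\theta>0$, a function $\psi_\theta\in\mathcal C_b(\V)$ with $\|\psi_\theta\|_\infty\leq\|\psi\|_\infty$ and $\hat\nu(G_\theta)\leq\theta$, where $G_\theta\coloneqq\{\psi_\theta\neq\psi\}$. For every $\gamma$ in the inner supremum, write $M\coloneqq\|\psi\|_\infty$. The perturbation of the integrand is then bounded pointwise by
\[
\bigl|(\psi-\psi_\theta)(\hat V)\,(W-h(\hat V))\bigr|\leq 2M\,\ind_{G_\theta}(\hat V)\bigl(|W|+|h(\hat V)|\bigr).
\]
Since $h\in L^1(\hat\nu)$, the term $\mathbbm E_{\hat\nu}[|h|;G_\theta]$ tends to $0$ as $\theta\downarrow0$. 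The whole difficulty is to control $\mathbbm E_\gamma[|W|;\hat V\in G_\theta]$ \emph{uniformly} over the $\gamma$ that matter in the supremum. I expect this to be the main obstacle.

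\emph{Step 2a (case (i)).} First reduce to $\lambda>0$. Replacing $\lambda$ by $\lambda+\eta$ changes the dual value by at most $\eta(\rho-\inf c)$, which is finite because $\inf c>-\infty$. When $\lambda>0$, any $\gamma$ with $\mathbbm E_\gamma[c]=+\infty$ contributes $-\infty$, so the supremum may be restricted to $\gamma$ with $\mathbbm E_\gamma[c]<+\infty$. For these $\gamma$, uniform integrability of $\mathbbm E_\gamma[|W|\mid\hat V]$ gives $\sup_\gamma\mathbbm E_\gamma[|W|;\hat V\in G_\theta]\to0$ as $\theta\downarrow0$. Consequently the value at $(\lambda,\psi_\theta)$ is at most the value at $(\lambda,\psi)$ plus $o(1)$. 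Letting $\theta,\eta\downarrow0$ gives the claim.

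\emph{Step 2b (case (ii)).} Here the cost term absorbs the tail. For $\eta>0$, the hypothesis gives
\[
\ind_{G_\theta}|W|\leq \ind_{G_\theta}a_\eta(\hat U)+\eta\bigl(c+|\inf c|\bigr).
\]
(The second term handles a possibly negative $c$. If $c=+\infty$, the integrand is already $-\infty$ whenever the multiplier below is positive.) Set $\lambda'\coloneqq\lambda+2M\eta$. Then for every $\gamma$,
\[
\mathbbm E_\gamma\bigl[f-\psi_\theta(\hat V)(W-h)-\lambda' c\bigr]\leq \mathbbm E_\gamma\bigl[f-\psi(\hat V)(W-h)-\lambda c\bigr]+2M\bigl(\mathbbm E_{\hat\mu}[a_\eta+|h|;\hat V\in G_\theta]+\eta|\inf c|\bigr).
\]
Taking the supremum over $\gamma$ and adding $\lambda'\rho=\lambda\rho+2M\eta\rho$ bounds the value at $(\lambda',\psi_\theta)$ by the value at $(\lambda,\psi)$ plus
\[
2M\bigl(\eta(\rho+|\inf c|)+\mathbbm E_{\hat\mu}[a_\eta+|h|;\hat V\in G_\theta]\bigr).
\]
Choose $\eta$ first to make $2M\eta(\rho+|\inf c|)<\epsilon/2$. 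Then, since $a_\eta+|h|$ is integrable, choose $\theta$ so that the remaining term is below $\epsilon/2$. Taking the infimum over $(\lambda,\psi)$ and letting $\epsilon\downarrow0$ yields $(\dualCb)\leq(\dualLinfty)$, which completes the proof.
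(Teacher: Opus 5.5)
Your proposal is correct and follows essentially the same route as the paper. It uses a Lusin--Tietze truncated continuous approximation of $\psi$; under (i) the error term $\mathbbm{E}_\gamma[|W|;\hat V\in G_\theta]$ is controlled directly by uniform integrability, and under (ii) it is absorbed by raising the multiplier to $\lambda+2M\eta$, which is the paper's $\lambda+\Delta$ with $\eta=\Delta/(2K)$.

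The differences are cosmetic. The paper defines the inner supremum over finite-cost couplings from the outset, so your reduction to $\lambda>0$ in case (i) is not needed there. Your claim that infinite-cost couplings contribute $-\infty$ also tacitly presumes either $\mathbbm{E}_\gamma[f^+]<+\infty$ or that same finite-cost convention. Finally, the paper handles a possibly negative $c$ via $0\leq a_1+c$ rather than via $|\inf c|$.
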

In particular, $(\dualLinfty)=(\dualCb)$ when, for example, $\W$ is bounded, or $\mathbbm{E}_{\hat\mu}[\abs{\hat W}^q] < +\infty$ along with $c(\hat u,u)\geq \alpha \abs{w-\hat w}^q$ for some $q>1$, $\alpha > 0,$ and all $w,\hat w\in\W.$

Still omitting the assumption of compactness of $\U$ \ref{assn:U_compact}, a further refinement can be obtained upon adding the assumptions \ref{assn:f_closed} and \ref{assn:c_closed} from \cite{jiajin2025unifying}.
Namely we will obtain what is referred to as the Interchangeability Principle \citep{gao2022short} by following standard arguments summarized in \PaperSupplementSection{apx:normalIntegrand_detail}.

\begin{restatable}[Interchangeability Principle]{proposition}{InterchangeabilityPrinciple}
    \label{prop:ip}
    Suppose that
    $\V\subseteq\R^n$ and $\W\subseteq\R$ are closed,
    $f$ is upper semicontinuous,
    $c$ is lower semicontinuous,
    and
    $\Psi$ is a subset of the
    $\mathcal{G}$-measurable, continuous, real valued functions on $\V$.
    Then $\dualPsiRef=\dualPsiIPRef$.
    \end{restatable}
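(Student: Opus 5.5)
The approach is to fix $\lambda\ge0$ and $\psi\in\Psi$ and show that, for this fixed pair, the inner suprema in \eqref{eq:dual_Psi} and \eqref{eq:dual_Psi_IP} coincide. Taking $\inf_{\lambda,\psi}$ then gives $\dualPsiRef=\dualPsiIPRef$. Write
\[
\phi(\hat u;v,w)\coloneqq f(v,w)-\psi(\hat v)(w-h(\hat v))-\lambda c(\hat u;v,w),
\qquad \Phi(\hat u)\coloneqq\sup_{(v,w)\in\U}\phi(\hat u;v,w).
\]
The inequality $\sup_\gamma \mathbbm E_\gamma[\phi]\le \mathbbm E_{\hat\mu}[\Phi]$ is immediate, since $\phi(\hat U;V,W)\le\Phi(\hat U)$ pointwise. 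The only care needed is with integrability conventions, for instance with $\mathbbm E[\,\cdot\,]$ taken as an upper integral whenever $\phi^+$ is not integrable. Everything therefore rests on the reverse inequality, which is the interchangeability principle for the integrand $-\phi$ over the decomposable space of measurable maps $\hat u\mapsto(v,w)$.

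\textbf{Step 1: normal integrand.} Check that $-\phi$ is a normal integrand on $\U$ with respect to $(\U,\mathcal F,\hat\mu)$. Since $\V\subseteq\R^n$ and $\W\subseteq\R$ are closed, $\U$ is a closed subset of $\R^{n+1}$. The three pieces behave as follows.
\begin{itemize}
\item $f$ is usc and does not depend on $\hat u$.
\item $\psi$ is continuous, and $h$ is measurable in $\hat v$, so $(\hat u,(v,w))\mapsto\psi(\hat v)(w-h(\hat v))$ is Carathéodory.
\item $c$ is lsc, hence jointly Borel, and $\lambda\ge0$.
\end{itemize}
So $-\phi(\hat u;\cdot)$ is lsc for each $\hat u$ and jointly measurable. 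One subtlety: with $\lambda=0$ and $c=+\infty$ the product must be read with the convention $0\cdot\infty=0$, or else $\lambda=0$ handled separately. On $\R^{n+1}$, a jointly Borel integrand that is lsc in the decision variable is normal when the underlying $\sigma$-field is complete (Rockafellar--Wets, Cor.~14.34). I would therefore pass to the $\hat\mu$-completion of $\mathcal F$. This is harmless because all expectations are unchanged, and a completion-measurable selection can be modified on a null set to become Borel.

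\textbf{Step 2: interchange over selections.} Apply Rockafellar--Wets Thm.~14.60 to obtain
\[
\sup_{(v,w)(\cdot)\ \text{measurable}}\mathbbm E_{\hat\mu}\bigl[\phi(\hat U;v(\hat U),w(\hat U))\bigr]=\mathbbm E_{\hat\mu}[\Phi],
\]
whenever some selection gives a finite value. That proviso is met by the selection $(v,w)=\hat u$: it has $c(\hat u,\hat u)=0$ under \ref{assn:c_closed} and $f\in L^1(\hat\mu)$ under \ref{assn:f_closed}, at least after truncation. If this condition is not available, I would argue directly with $\epsilon$-optimal measurable selections of $\Phi\wedge k$, which exist by measurable selection for normal integrands, and then let $k\to\infty$ by monotone convergence.

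\textbf{Step 3: selections to couplings.} Each measurable selection $s$ yields $\gamma\coloneqq(\mathrm{id},s)_\#\hat\mu\in\Gamma_{\hat\mu}$ with $\mathbbm E_\gamma[\phi]$ equal to the selection value. It remains to ensure $\gamma\in\Gamma_\W$, i.e.\ $\mathbbm E|w(\hat U)|<\infty$. I expect this to be the main obstacle: an $\epsilon$-optimal selection may have a non-integrable $w$-component. To handle it, truncate. On the sets $\{|w(\hat U)|>k\}$ replace the selection by $\hat u$ itself, whose $\hat W$ need not be integrable either. If it is not, replace instead by a fixed reference point with finite $\phi$, chosen measurably in $\hat u$. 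Then show that the resulting loss in objective vanishes as $k\to\infty$, which follows by dominated or monotone convergence once $\phi^-$ along the selection is controlled. If this fails in general, the fallback is to interpret $\Gamma_\W$ loosely or to restrict to $\hat\mu$ with integrable $\hat W$. Combining the three steps gives the reverse inequality, and hence the proposition.
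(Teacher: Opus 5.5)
Your overall route matches the paper's: extend the integrand by $-\infty$ off $\mathcal{U}$, verify normality, invoke Rockafellar--Wets Theorem~14.60, and push selections forward to couplings. The gap is in how you reconcile Theorem~14.60 with the integrability requirement defining $\Gamma_{\mathcal W}$. You apply the theorem over \emph{all} measurable selections and then try to repair non-integrable $w$-components by truncation, but Step~3 is never closed. Replacing the selection on $\{|w(\hat U)|>k\}$ by $\hat u$ fails when $\hat W\notin L^1$. Replacing it by a pointwise ``reference point with finite $\phi$'' yields neither an integrable $w$-component nor an integrable $\phi^-$. So the error term $\mathbb{E}_{\hat\mu}[\phi(\hat U;r(\hat U));\,|w(\hat U)|>k]$ need not vanish, and need not even be finite. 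Your fallbacks (loosening $\Gamma_{\mathcal W}$, or assuming $\hat W$ integrable) change the statement rather than prove it.

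The missing idea is that the integrability constraint can be built into the decision space, because it is compatible with decomposability. Take $\mathcal X\coloneqq\{x:\mathcal U\to\mathbb R^{n+1}\ \text{measurable}:\ \mathbb E_{\hat\mu}|x_{\mathcal W}(\hat U)|<\infty\}$. Splice $x_0\in\mathcal X$ with a bounded measurable $x_1$ on a set $A$. The result has $w$-component with expectation at most $\mathbb E_{\hat\mu}|x_{0,\mathcal W}(\hat U)|+\sup|x_1|\,\hat\mu(A)<\infty$. Hence $\mathcal X$ is decomposable (Definition~14.59), and Theorem~14.60 applies directly on $\mathcal X$. Every $x\in\mathcal X$ induces $(\mathrm{id},x)_\#\hat\mu\in\Gamma_{\hat\mu}\cap\Gamma_{\mathcal W}$, so
\[
\sup_{x\in\mathcal X}\mathbb E_{\hat\mu}[\phi(\hat U;x(\hat U))]\le\sup_{\gamma\in\Gamma_{\hat\mu}\cap\Gamma_{\mathcal W}}\mathbb E_\gamma[\phi]\le\mathbb E_{\hat\mu}[\Phi]=\sup_{x\in\mathcal X}\mathbb E_{\hat\mu}[\phi(\hat U;x(\hat U))],
\]
and no truncation step is needed. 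This is exactly the paper's argument.

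The finiteness proviso of Theorem~14.60 must then be witnessed by an element of $\mathcal X$. Your witness $\hat u\mapsto\hat u$ relies on $c(u,u)=0$, $f\in L^1(\hat\mu)$ and $\hat W\in L^1$ (and on integrability of $\psi(\hat V)(\hat W-h(\hat V))$). None of these is among this proposition's hypotheses, and the paper's guide also leaves this proviso implicit. Once some $x_0\in\mathcal X$ with $\mathbb E_{\hat\mu}[\phi(\hat U;x_0(\hat U))^-]<\infty$ is available, your truncation would work too, by splicing in $x_0$ on $\{|w(\hat U)|>k\}$. That splicing is precisely what decomposability packages.
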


\subsection{CM-OT Duality Without Compactness}
\label{sec:discussion}

In this section we conclude our study of CM-OT duality $[\eqref{eq:primal}=\dualPsiIPRef]$.
We consolidate our previous results, which do not require compactness (of $\U$ or of the primal and dual feasible sets) into a single statement.
That is, by
\cref{prop:main_result,thm:Cb_Linfty}, and \cref{prop:ip}, we immediately obtain the following combined result.
\begin{theorem}
    \label{thm:combined}
    Suppose 
    Assumptions \ref{assn:**a}, \ref{assn:**b} and \ref{ass:local-conditional-ui} hold;
        \textnormal{(i)} $\lor$ \textnormal{(ii)} of \cref{thm:Cb_Linfty} holds;
    $\V\subseteq\R^n$;
    $f$ is upper semicontinuous;
    $c$ is lower semicontinuous. 
    Then,
    \(
    \eqref{eq:primal}
    =
    \eqref{eq:primal_explicit}
    =
    (\dualLinfty)
    =
    (\dualCb)
    =
    (\dualCbIP).
    \)
\end{theorem}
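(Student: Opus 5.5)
The plan is to assemble the chain of equalities link by link, citing the earlier results and checking that each one's hypotheses follow from those of \cref{thm:combined}. First, \cref{assn:**a} holds, so \cref{lem:v1=v2} gives $\eqref{eq:primal}=\eqref{eq:primal_explicit}$ directly. Second, for $\eqref{eq:primal}=(\dualLinfty)$ I will invoke \cref{prop:main_result}. It requires \cref{assn:**a}, \cref{assn:**b} and \cref{ass:local-conditional-ui}, all of which are assumed, together with the regular conditional probability property of $(\V,\mathcal G,\hat\nu)$. Since $\V\subseteq\R^n$ is closed (as in the standing setting), it is Polish with the Borel $\sigma$-algebra. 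Polish spaces with Borel $\sigma$-algebras have the RCP property \citep{faden1985rcp}, so \cref{prop:main_result} applies.

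Third, $(\dualLinfty)=(\dualCb)$ follows from \cref{thm:Cb_Linfty}. Its hypotheses are that $\V$ is Polish with Borel $\mathcal G$, just verified, and that (i) or (ii) holds, which is assumed. Fourth, $(\dualCb)=(\dualCbIP)$ comes from \cref{prop:ip} with $\Psi=\mathcal C_b(\V)$. Bounded continuous functions are real-valued, continuous and Borel measurable, hence $\mathcal G$-measurable. The remaining hypotheses of \cref{prop:ip} are exactly the assumed upper semicontinuity of $f$, lower semicontinuity of $c$, and closedness of $\V\subseteq\R^n$ and $\W\subseteq\R$. Chaining the four equalities in this order gives the full string.

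The only step needing care is the bookkeeping of standing assumptions. Closedness of $\V$ and $\W$ is part of the setting of \cref{sec:cmot-problem-setting}. The requirement $\mathcal G=\sigma(\tau_\V)$ and $\mathcal B=\sigma(\tau_\W)$ must be read as in force, because semicontinuity of $f$ and $c$ is only meaningful with respect to these topologies. I expect no genuine obstacle, since the theorem is a consolidation. The one point I would double-check is that \cref{prop:ip} places no integrability requirement on the elements of $\Psi$ beyond measurability and continuity, and it does not: $\mathcal C_b$ functions are bounded, so $\psi(\hat V)(W-h(\hat V))$ is integrable on $\Gamma_\W$ because $h\in L^1(\hat\nu)$.
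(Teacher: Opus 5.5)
Your proposal is correct and follows the paper's argument. The paper obtains the theorem as an immediate consolidation of \cref{lem:v1=v2}, \cref{prop:main_result}, \cref{thm:Cb_Linfty} and \cref{prop:ip}; your hypothesis checks are exactly the bookkeeping the paper leaves implicit: a closed $\mathcal{V}\subseteq\mathbb{R}^n$ with its Borel $\sigma$-algebra is Polish and so has the RCP property, and $\mathcal{C}_b(\mathcal{V})$ consists of continuous, $\mathcal{G}$-measurable real functions.
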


We conclude with remarks on the perturbation approach of this section. \cref{assn:**a,assn:**b} are nonrestrictive, primal feasibility conditions that are verifiably distinct from Slater (see \cref{ex:not_slater})
while \cref{ass:local-conditional-ui} is a mild uniform integrability condition, sufficient for tail-control of (an unbounded) loss.
With these three perturbation-inspired conditions, we establish duality for \eqref{eq:primal}, a program with both infinitely many constraints and an infinite-dimensional decision.
Notably, the perturbation approach targets the necessary and sufficient condition of zero duality gap (upper semicontinuity of the perturbation function) which we establish through explicit, geometric mixing arguments.
This is in contrast to more popular approaches, e.g., compactness, Slater-like conditions (\cref{prop:convex_duality}\ref{cond:slater}), saddle point minimax, conic duality, or Fenchel duality theorems \citep{Rockafellar.70,brezis2011functional,vancuong2023generalized}, which target stronger sufficient conditions.
In particular, the existence of either primal or dual solutions is not guaranteed under our \cref{assn:**a,assn:**b} (see Examples \ref{ex:nonexistence_primal}, \ref{lem:dual-nonexist-template}, \ref{ex:nonexistence_dual}, and \ref{ex:nonexistence_dual_Cb} in \PaperSupplementSection{apx:nonexistence}), distinguishing our result from any theory that ensures the existence of a primal or dual solution.

\section{``Primal-Worst Equals Dual-Best" via  Perturbations}
\label{sec:interior}
Now we revisit the concept of robust duality, commonly
known
as ``primal-worst equals dual-best".
We show that perturbation-based formulations
offer
a unified and transparent characterization of this principle.

\subsection{The Primal-Worst Problem}

We recall the notion of a \emph{robust counterpart} of an optimization problem involving uncertain parameters, in which a decision maker anticipates the worst case in terms of both feasibility and cost. Accordingly,
\cite{beck2009duality} provided the alternative moniker, \emph{primal-worst}, which we formally define below using $z$ to denote the uncertain parameters.
\begin{definition}[Primal-Worst]
    \label{def:pw_db}
    Let $X,Z$ be locally convex  {Hausdorff} topological vector spaces, $\mathcal{Z}\subseteq Z$ be a closed, convex, nonempty set. If $\phi:X \times Z \rightarrow \Reals$ is a convex-concave function, then we refer to 
    \(
    \inf_x \sup_z \phi(x,z)
    \)
    as a \emph{primal-worst} problem.
\end{definition}
In particular, for $m\geq0$, given functions $\{f_i:X\times Z\to\Reals\}_{i=0}^m$ such that: 
    $\{f_i(\cdot\,, z)\}_{i=0}^m$ are proper, closed, convex for any $z\in Z$; and 
    $\{-f_i(x,\cdot)\}_{i=0}^m$ are proper, closed, convex for any $x\in X$,
    then
    \begin{equation}  \label{eq::P-W}
    \tag{\PW}
    \begin{array}{cl}
    \underset{x\in X}{\infimum} & F_0(x) \coloneqq \textstyle\sup_{z_0 \in \mathcal{Z}} f_0(x,z_0) \\[1mm]
    \st &
    F_i(x)\coloneqq \textstyle\sup_{z_i \in \mathcal{Z}}f_i(x, z_i) \leq 0, \;\; i \in (m)
    \end{array}
    \end{equation}
    is a {primal-worst} problem. 
    We remark that $-\infty<f_i<+\infty$ for all $x\in X$ and $z\in Z$; consequently, both $f_i(\cdot, z)$ and $f_i(x, \cdot)$ are continuous for any $(x, z) \in X\times Z$.
For primal-worst problems of the form \eqref{eq::P-W}, \cite{beck2009duality} introduce a procedure for obtaining a lower bound in the form of another optimization problem, which they refer to as the \emph{dual-best} problem.
When tight, a duality relationship holds between pessimistic and optimistic mathematical programming formulations termed \emph{primal-worst equals dual-best}.
Since \cite{beck2009duality}, there have been several follow-up works, including \cite{jeyakumar2010strong,jeyakumar2014strong} and \cite{zhen2025unified}, that explore this duality.
As it stands, the literature now presents a variety of
dual-best problems derived and studied under different (combinations of) duality theories, e.g., Lagrangian, Conic, and Fenchel, resulting in both convex and nonconvex formulations.

In the remainder of this section, we
pursue a perturbation duality approach to the formulation of dual-best and the study of its equality to primal-worst. We demonstrate the flexibility of this approach in unifying the various formulations from the literature, as well as its capacity to facilitate streamlined proofs.

\subsection{Formulating Dual-Bests} \label{sec: FormulatingDualBest}
We now present our perturbation-based definition of a dual-best.
As in the review of perturbation duality in \PaperSupplementSection{sec: PerturbationPreliminaries}, where a bifunction embeds a primal minimization problem in a family of perturbed minimization problems, we use a collection of bifunctions here to embed a min-max problem in a family of perturbed min-max problems.

\begin{definition}[Dual-Best]
    \label{def:db}
    Given a primal-worst problem with $\phi$ from \cref{def:pw_db},
    let $F_z: U \times X \rightarrow \Reals$ denote a convex bifunction for each $z \in Z$
    satisfying
    $\inf_x\sup_z \phi(x,z) = \inf_{x} \sup_{z} F_z(0, x)$.
    We refer to
    \(
        \sup_{z\in Z, u^* \in U^*} (F_z)^{\sf d}(0, u^*)
    \)
    as a \emph{dual-best} problem.
\end{definition}
Just as a dual convex problem lower bounds its primal problem, the dual-best afforded by a family of bifunctions $\{F_z\}_{z\in Z}$ is also a lower bound for its primal-worst $\inf_x \sup_z \phi(x,z)$; one way to see this is as an application of weak duality to the \emph{robust} bifunction $(\sup_z F_z)$ in conjunction with interchanging $\inf_{x,u}\sup_z$:
\begin{align}
    \label{eq:db_derivation_1}
    \inf_x\sup_z\phi(x,z)
    \equiv
    \inf_x \sup_{z} F_z(0, x)
    &\geq
    \sup_{u^*} (\sup_z F_z)^{\sf d}(0, u^*)
    =
    \sup_{u^*} \inf_{x,u} \sup_{z} F_z(u,x) + \langle u^*, u \rangle
    \tag{i$'$}
    \\*
    \label{eq:db_derivation_2}
    &\geq
    \sup_{u^*} \sup_{z} \inf_{x,u} F_z(u,x) + \langle u^*, u \rangle
    \tag{ii$'$}
    = \sup_{z, u^*} (F_z)^{\sf d} (0, u^*)
\end{align}
However, unlike a dual to a primal problem, a dual-best to a primal-worst may not necessarily be a convex formulation.
Importantly, we also note that \cref{def:db} admits more than one dual-best to a primal-worst, and that the form of a dual-best relies crucially on the selected family $\{F_z\}_{z \in Z}$ of convex bifunctions.

\subsection{Unifying Dual-Bests to \eqref{eq::P-W}}
\cref{def:db} affords a flexibility that allows us to unify various ``dual-bests'' from the literature associated with a primal-worst problem of the form \eqref{eq::P-W}.

\paragraph{A Nonconvex Dual-Best.}

Consider the family $\{F'_z\}_{z\in Z^{[m]}}$ defined via
\begin{align}
\label{eq:pwdb_pw_rhs}
    F'_z(u,x)\coloneqq f_0(x, z_0) - \iota_{\Z}(z_0)+ \sum_{i\in(m)}\iota_{(-\infty, u_i]}(f_i(x, z_i) - \iota_{\Z}(z_i)) ,
    \quad\forall z\in Z^{[m]},
\end{align}
where $u\coloneqq(u_i)_{i=1}^m\in\R^m$ 
are
perturbations to the range of
$f_i - \iota_{\Z}$. The resulting dual-best is  
\begin{align*}
\sup_{z, u^*} (F'_z)^{\sf d} (0, u^*) &= \sup_{z\in \mathcal{Z}^{[m]}} \Big[\sup_{u^* \in \R_{+}^m} \inf_x f_0(x, z_0) + \sum_{i\in(m)} u_i^*\cdot f_i(x, z_i)\Big],
\end{align*}
which recovers precisely the (nonconvex) dual-best program studied in \cite{beck2009duality} and \cite{jeyakumar2010strong,jeyakumar2014strong}.

\paragraph{A Convex Dual-Best.}
\label{sec:convex_db}
We obtain a different (convex) dual-best
via
the family $\{F_z\}_{z\in Z^{[m]}}$ given by
\begin{align}
\label{eq:pwdb_i_ii_bifunction}
    F_z(u,x)\coloneqq f_0(x - d_0, z_0) - \iota_{\Z}(z_0)+ \textstyle\sum_{i\in(m)}\iota_{(-\infty, w_i]}(f_i(x - d_i, z_i) - \iota_{\Z}(z_i)),
    \quad\forall z\in Z^{[m]},
\end{align}
where $u\coloneqq(\{w_i\}_{i=1}^m,\{d_i\}_{i=0}^m)\in\R^m\times X^{[m]}$ represents perturbations to the constraints and decisions, respectively.
The dual-best problem is
\begin{align}
    &\sup_{u^*,z} F_z^{\sf d}(0, u^*)
    \equiv
    \sup_{u^*,z} \inf_{x,u} F_z(u,x) + \inner{u^*}{u} \nonumber\\
    &\equiv
    \sup_{u^*} \sup_{z} \inf_{x,u}
    f_0(x - d_0, z_0) - \iota_{\Z}(z_0)
    + \textstyle \sum_{i\in(m)}\iota_{(-\infty, w_i]}(f_i(x - d_i, z_i) - \iota_{\Z}(z_i))
    + w_i^*w_i
    + \inner{d_i^*}{d_i} \nonumber
    \\*
    &= \!\!\!\sup_{\substack{\{w_i^*\geq0\}_{i=1}^m\\\sum_{i=0}^m d_i^*=0}}
    \sup_{\{z_i\in\Z\}_{i=0}^m} \inf_{\{y_i\}_{i=0}^m} 
    \underbrace{
    f_0(y_0, z_0)
    + {\textstyle\sum_{i\in(m)}} w_i^* \cdot f_i(y_i,z_i)
    - {\textstyle\sum_{i=0}^m} \inner{d_i^*}{y_i}
    }_{\ell(y; z, u^*)\coloneqq}, \label{eq:: D-B Shortform}
\end{align}
where
the equality holds via the coordinate change $y_i\coloneqq x-d_i\in X$ for $i\in[m]$ 
(see p.~322-323 of \cite{Rockafellar.70});
we also record the expression $\ell(y; z, u^*)$.
We can attain a more explicit form to \eqref{eq:: D-B Shortform}
by using the following convex conjugacy calculation (compare with \citet[Theorem 16.1]{Rockafellar.70}%
):
\begin{align*}
    \label{eq:relation_r}
    \sup_{z_i\in Z}
    \inf_{x\in X} c\, f_i(x,z_i) - \inner{d_i^*}{x}
    -\iota_{\Z}(z_i)
    =
    \sup_{z_i \in Z}
    -(h_i c)(z_i,d_i^*)
    -\iota_{c\,\Z}(z_i)
    ,\;\; \forall c\in[0,+\infty),\,\forall i\in[m]
    \tag{\text{C}},
\end{align*}
where 
$h_i:Z\times X^*\to\Reals$ is defined by
$h_i(z_i,d_i^*)\coloneqq [f_i(\cdot\,,z_i)]^*(d_i^*)$ are convex, and for $w_i^*\geq 0$, $(h_i w_i^*)$ denotes the right scalar multiplication of $h_i$ by $w_i^*$. Using \eqref{eq:relation_r}, we arrive at the following maximization form:  
\begin{equation}
    \label{eq::D-B}\tag{\DB}
    \begin{array}{cl}
    \underset{\{z_i\}_{i = 0}^m, \{w^*_i\}_{i = 1}^m, \{d^*_i\}_{i=0}^m}{\supremum}
    & -h_0(z_0, d_0^*) - \displaystyle\sum_{i\in(m)}  (h_i w_i^*)(z_i, d_i^*) \\[4mm]
    \st &
    \displaystyle\sum_{i\in[m]} d_i^* = 0, \;  z_0 \in \mathcal{Z},\mbox{ and } z_i \in w_i^*\cdot \mathcal{Z}, \;\; w_i^* \geq 0,\;\; i \in (m), 
    \end{array}
\end{equation}
which we note is equivalent to \citet{zhen2025unified}'s (D-B$'$)—the latter being explicitly obtainable after closing the objective (via the upper semi-continuous hull) and the feasible region (via the addition of recession directions) of \eqref{eq::D-B}.

\subsection{The Primal-Worst Equals Dual-Best Principle} \label{sec: pwEqualsdb}
With various conceptualizations of dual-bests unified under \cref{def:db}, we now turn our attention to the concept known as \emph{primal-worst equals dual-best}, introduced and studied by \cite{beck2009duality,jeyakumar2010strong, jeyakumar2014strong, zhen2025unified}. Under our unifying framework derived from bifunctions, this will be understood as the case when a family $\{F_z\}_{z\in Z}$ yields a dual-best equal in value to the primal-worst, that is, $\inf_x \sup_{z} F_z(0, x) = \sup_{z, u^*} (F_z)^{\sf d} (0, u^*).$ 

It is clear that primal-worst equals dual-best holds when \eqref{eq:db_derivation_1} and \eqref{eq:db_derivation_2} are
satisfied with equality:
\begin{enumerate}[itemsep=0pt,topsep=0pt]
    \item[\textlabel{cond:pwdb_normality}{\textnormal{I}$'$}.] $\inf_x (\sup_{z} F_z)(0, x)
    = \sup_{u^*} (\sup_z F_z)^{\sf d}(0, u^*)$, normality of the convex bifunction $(\sup_z F_z)$;
    \item[\textlabel{cond:pwdb_minimax}{\textnormal{II}$'$}.]
    $\sup_{u^*} (\sup_z F_z)^{\sf d}(0, u^*) = \sup_{u^*}\sup_z (F_z)^{\sf d} (0, u^*)$, commutativity of ``$\sup_z$\!'' and
    ``$(\cdot)^{\sf d}$''.
\end{enumerate}
Consequently, \eqref{cond:pwdb_normality} and \eqref{cond:pwdb_minimax} can provide a straightforward roadmap for the design of $\{F_z\}_{z\in Z}$ and accompanying assumptions so as to obtain dual-bests for which primal-worst equals dual-best.
In fact, we proceed to illustrate that this perspective can yield arguably simpler and shorter proofs than those currently found in the literature.
Specifically, we will detail how \eqref{eq::P-W} and \eqref{eq::D-B} can be a primal-worst, dual-best pairing under suitable selection of bifunctions, and subsequently be made equal under familiar sufficient conditions.
In doing so, we will generalize and extend the results of \citet[Theorem 5]{zhen2025unified}.

\subsubsection{\eqref{eq::P-W} = \eqref{eq::D-B} by Perturbing Decisions and Constraints}
Here we consider the family of bifunctions $\{F_z\}_{z \in Z^{[m]}}$ given in \eqref{eq:pwdb_i_ii_bifunction}, for which it was already noted that $\eqref{eq::P-W} = \inf_x\sup_z F_z(0,x)$, i.e., the primal-worst coincides with  \eqref{eq::P-W}.
Moreover, we recall that a conjugacy calculation
ensures that its dual-best coincides with $\eqref{eq::D-B}$.
It follows that $[\eqref{eq::P-W}=\eqref{eq::D-B}]$ is then simply a matter of
collecting sufficient conditions to ensure
$[\eqref{cond:pwdb_normality} \land \eqref{cond:pwdb_minimax}]$ holds for the family $\{F_z\}_{z \in Z^{[m]}}$.

We obtain
a more general form of \cite{zhen2025unified}'s Theorem 5 (i)-(ii), whose original derivation extends over six pages, with a concise perturbation argument.

\begin{proposition}[%
    Generalized
    \cite{zhen2025unified}'s Theorem 5 (i)-(ii)]
\label{proposition: PW-DB Stability Traditional}%
    
    Consider the setting of \cref{def:pw_db,def:db}, and let $\mathcal{Z}$ be compact.  If one of the following conditions hold:
    \begin{enumerate}[itemsep=0pt,topsep=0pt,label=$(\roman*)$]
        \item
        \label{cond:i}
        there exists $\bar{x} \in\cap_{i\in[m]}\ri\dom F_i$ such that $F_i(\bar{x}) < 0$ for all $i \in (m)$;
        \item 
        \label{cond:ii}
        $X$ is a reflexive Banach space, and \eqref{eq::P-W} has a nonempty, bounded feasible region;
    \end{enumerate}
    then $[\eqref{eq::P-W}=\eqref{eq::D-B}]$. 
    If \ref{cond:i}, then 
    \eqref{eq::D-B} has an optimal solution; if \ref{cond:ii} then  \eqref{eq::P-W} has an optimal solution.
\end{proposition}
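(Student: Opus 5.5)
The plan is to follow the roadmap just laid out: establish \eqref{cond:pwdb_normality} and \eqref{cond:pwdb_minimax} for the family $\{F_z\}_{z\in Z^{[m]}}$ of \eqref{eq:pwdb_i_ii_bifunction}. The conjugacy calculation \eqref{eq:relation_r} identifies the resulting dual-best with \eqref{eq::D-B}. The robust bifunction is
\[
G(u,x)\coloneqq \sup_z F_z(u,x)=F_0(x-d_0)+\textstyle\sum_{i\in(m)}\iota_{(-\infty,w_i]}(F_i(x-d_i)),
\]
where $u=(w,d)$. This $G$ is convex, since it is a supremum of convex functions, and its perturbation function is $p(w,d)=\inf_x G((w,d),x)$.

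For \eqref{cond:pwdb_minimax}, note that
\[
(\sup_z F_z)^{\sf d}(0,u^*)=\inf_{x,u}\sup_z\bigl[F_z(u,x)+\langle u^*,u\rangle\bigr].
\]
After the change of variables $y_i=x-d_i$, the bracket is separable across $i$. It has the form $\sum_i\bigl[w_i^*f_i(y_i,z_i)-\iota_{\Z}(z_i)-\langle d_i^*,y_i\rangle\bigr]$, with $d^*$ summing to zero as in \eqref{eq:: D-B Shortform}; otherwise the value is $-\infty$. I therefore need to swap $\inf_{y_i}\sup_{z_i\in\Z}$ for each $i$ separately. The function $\ell_i(y_i,z_i)=w_i^*f_i(y_i,z_i)-\langle d_i^*,y_i\rangle$ is convex in $y_i$. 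It is concave and upper semicontinuous in $z_i$, because $f_i(x,\cdot)$ is closed and finite. Since $\Z$ is compact and convex, Sion's minimax theorem gives $\inf_{y_i}\sup_{z_i\in\Z}=\sup_{z_i\in\Z}\inf_{y_i}$. The constraint terms need care, because the indicator $\iota_{(-\infty,w_i]}$ is not linear in $f_i$. First I would minimize over $w_i$ in the $\inf$, which turns $\inf_{w_i}\bigl[\iota(\cdot\le w_i)+w_i^*w_i\bigr]$ into $w_i^*\sup_{z_i}f_i$ for $w_i^*\ge0$ and into $-\infty$ for $w_i^*<0$. Only then would I apply Sion. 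This settles \eqref{cond:pwdb_minimax} under compactness of $\Z$ alone.

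For \eqref{cond:pwdb_normality} I treat the two cases separately. Under \ref{cond:i}, I aim for the stronger stability $\partial p(0)\neq\varnothing$, which also yields existence of a dual solution. Because the sup in \eqref{cond:pwdb_minimax} is attained by Sion and compactness, that dual solution transfers to a solution of \eqref{eq::D-B}. To show stability, I would prove $0\in\ri\dom p$ and invoke the standard fact that a convex function finite at a relative interior point of its domain is subdifferentiable there (Rockafellar, Theorem 23.4, or its locally convex analogue on the finite-dimensional affine hull). The domain is
\[
\dom p=\{(w,d):\exists x,\;x-d_0\in\dom F_0,\;F_i(x-d_i)\le w_i\}.
\]
With $\bar x$ as given, and for any $x$ and small perturbations $(w,d)$ in the affine hull directions, we have $x-d_i\in\dom F_i$ near $\bar x$ and $F_i(\bar x-d_i)<w_i$ by continuity of $F_i$ relative to $\ri\dom F_i$. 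I expect this step to be the main obstacle. In infinite dimensions, "relative interior" and continuity of $F_i$ on it must be interpreted carefully. The Slater slack handles the $w$ directions, while the $d$ directions only span $\prod_i(\dom F_i-\bar x)$. So I would restrict $p$ to the affine hull of its domain and check that $0$ lies in the (relative) core. I would also need $p(0)>-\infty$, and may have to assume \eqref{eq::P-W} is finite, treating the case $p(0)=-\infty$ trivially.

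Under \ref{cond:ii}, I would instead show $p$ is lower semicontinuous at $0$, i.e.\ normality. Take $u^k=(w^k,d^k)\to0$ and near-optimal $x^k$. By boundedness of the feasible set, the perturbed feasible sets $\{x:F_i(x-d_i)\le w_i\}$ are uniformly bounded for small $u$; this uses the recession-cone argument, since closed convex level sets keep the same recession cone. Reflexivity then gives a weakly convergent subsequence $x^k\rightharpoonup\bar x$. Since the $F_i$ are weakly lsc, being closed and convex, and $x^k-d_i^k\rightharpoonup\bar x$, the point $\bar x$ is feasible and $F_0(\bar x)\le\liminf p(u^k)$. This yields $\cl p(0)=p(0)$. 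Applying the same argument with $u^k\equiv0$ produces an optimal solution of \eqref{eq::P-W}.
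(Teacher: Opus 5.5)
Your treatment of condition II$'$ matches the paper: you eliminate $w_i$ and then apply Sion over the compact $\mathcal{Z}$. Your treatment of condition (i) also matches: Slater gives $\partial p(0)\neq\varnothing$, and compactness supplies the accompanying optimal $z$.

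The difficulty you anticipate in (i) does not arise. Since $\mathcal{Z}$ is compact and $f_i(x,\cdot)$ is finite and upper semicontinuous, $F_i=\max_{z\in\mathcal{Z}}f_i(\cdot,z)$ is finite. Hence $\operatorname{dom}F_i=X$, and (i) is plain Slater. In infinite dimensions the key fact is not $0\in\operatorname{ri}\operatorname{dom}p$; Rockafellar's Theorem 23.4 is finite-dimensional. What you need is the bound $p(w,d)\le F_0(\bar x-d_0)$ whenever $F_i(\bar x-d_i)\le w_i$. This bound holds on a neighborhood of $0$ once $F_0$ and the $F_i$ are continuous at $\bar x$. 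That continuity is automatic in finite dimensions or in Banach spaces, since these functions are finite, closed and convex. A convex function bounded above near $0$ and finite at $0$ is continuous, hence subdifferentiable, there.

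The genuine gap is in (ii). The step ``closed convex level sets keep the same recession cone, so the perturbed feasible sets are uniformly bounded'' holds only in finite dimensions. In $\ell^2$, take $g(x)=\sum_n x_n^2/n$. Then $\{g\le 0\}=\{0\}$, while $\{g\le\varepsilon\}\ni\sqrt{\varepsilon n}\,e_n$ is unbounded for every $\varepsilon>0$, even though its recession cone is $\{0\}$.

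No other argument can repair this, because the claim itself fails. Take $X=\ell^2$, $Z=\mathbb{R}$, $\mathcal{Z}=\{0\}$, $m=1$, $f_0(x,z)=\langle a,x\rangle$ with $a_n=1/n$, and $f_1(x,z)=g(x)$. Condition (ii) holds, and \eqref{eq::P-W}$=0$ is attained at $x=0$. Yet $p(w,d)=-\infty$ for every $w>0$: let $H_N=\sum_{n\le N}1/n$ and $y_n=-\sqrt{w/H_N}$ for $n\le N$, so that $g(y)=w$ and $\langle a,y\rangle=-\sqrt{wH_N}$. Equivalently, \eqref{eq::D-B} forces $d_0^*=a=-d_1^*$. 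Since $\sum_n n a_n^2=+\infty$, we get $(h_1w_1^*)(z_1,-a)=+\infty$ for all $w_1^*\ge 0$, so \eqref{eq::D-B}$=-\infty$. Thus (ii) is false for infinite-dimensional reflexive $X$.

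Your argument is correct when $\dim X<\infty$, which is Zhen et al.'s setting. There, run the recession argument on the closed convex set $\{(x,w,d):F_i(x-d_i)\le w_i\}$, normalizing $(x^k-\bar x,u^k)$, to get the uniform bound. It also works in reflexive spaces under the stronger hypothesis of the paper's subsequent remark: $\{x:f_i(x,\bar z_i)\le\delta\}$ is bounded for some $i$, some $\bar z_i\in\mathcal{Z}$ and some $\delta>0$. Then every perturbed feasible set with $w_i\le\delta$ lies in $d_i+\{f_i(\cdot,\bar z_i)\le\delta\}$.

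The paper proves (ii) by a different route. It obtains a minimizer of \eqref{eq::P-W} by weak compactness and views \eqref{eq::P-W} as the dual of the dual-best via item (d) of \cref{prop:convex_duality}. It then reads item (c) as ``attainment of the dual implies stability.'' That inference is not valid, since stability is normality \emph{plus} dual attainment. The example above, in which \eqref{eq::P-W} is attained, shows that route cannot supply the missing lower semicontinuity either.
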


\begin{proof}
    Let $\{F_z\}_{z\in Z^{[m]}}$ be the family of bifunctions  defined in \eqref{eq:pwdb_i_ii_bifunction} for which $\inf_x\sup_z F_z(0,x) = \eqref{eq::P-W}$ and $\sup_{u^*}\sup_z (F_z)^{\sf d}(0,u^*) = \eqref{eq::D-B}$.
    To conclude $[\eqref{eq::P-W}=\eqref{eq::D-B}]$, we will show that either of \ref{cond:i} or \ref{cond:ii} yields \eqref{cond:pwdb_normality}, and compactness of $\mathcal{Z}$ yields \eqref{cond:pwdb_minimax}.

    Evidently, \eqref{cond:pwdb_minimax} can be obtained from the compactness of $\mathcal{Z}$ via Sion's Minimax Theorem:
    \begin{align*}
    &\sup_{u^*,z} F_z^{\sf d}(0, u^*) \equiv \!\!\!\sup_{\substack{\{w_i^*\geq0\}_{i=1}^m\\\sum_{i=0}^m d_i^*=0}}
    \max_{\{z_i\in\Z\}_{i=0}^m} \inf_{\{y_i\}_{i=0}^m} 
    f_0(y_0, z_0)
    + {\textstyle\sum_{i\in(m)}} w_i^* \cdot f_i(y_i,z_i)
    - {\textstyle\sum_{i=0}^m} \inner{d_i^*}{y_i}
    \\
       &= \sup_{\substack{\{w_i^*\geq0\}_{i=1}^m\\\sum_{i=0}^m d_i^*=0}}
       \inf_{\{y_i\}_{i=0}^m}  \max_{\{z_i\in\Z\}_{i=0}^m} 
        f_0(y_0, z_0)
        + {\textstyle\sum_{i\in(m)}} w_i^* \cdot f_i(y_i,z_i)
        - {\textstyle\sum_{i\in[m]}} \inner{d_i^*}{y_i}\\
        &\equiv \sup_{\substack{\{w_i^*\geq0\}_{i=1}^m\\\sum_{i=0}^m d_i^*=0}}
       \inf_{\{y_i\}_{i=0}^m}  
        F_0(y_0)
        + {\textstyle\sum_{i\in(m)}} w_i^* \cdot F_i(y_i)
        - {\textstyle\sum_{i\in[m]}} \inner{d_i^*}{y_i} \equiv \sup_{\substack{\{w_i^*\geq0\}_{i=1}^m\\\sum_{i=0}^m d_i^*=0}} (\sup_z F_z)^{\sf d}(0,u^*).
    \end{align*}

    Condition \ref{cond:ii}, given that the objective $F_0$ of \eqref{eq::P-W} is closed and convex, ensures the existence of an optimal solution to \eqref{eq::P-W} by a weakly convergent subsequence \citep[Proposition II.1.2]{Ekeland.99}.
    Using the closedness of $(\sup_z F_z)$, we may appeal to \cref{prop:convex_duality}\ref{cond:dualdual} with $p(0)\coloneqq \inf_{u^*}-(\sup_z F_z)^{\sf d}(0,u^*)$ %
    and $q(0)\coloneqq\sup_{x}-(\sup_z F_z)(0,x)$%
    ; specifically, \eqref{eq::P-W} has an optimal solution means the problem $q(0)$ has an optimal solution, so that \cref{prop:convex_duality}\ref{convex_duality:stability} yields $p(0) = q(0)$, and hence \eqref{cond:pwdb_normality}. 
    Having established \eqref{cond:pwdb_minimax} above, we can conclude $[\eqref{eq::P-W}=\eqref{eq::D-B}]$, with \eqref{eq::P-W} having an optimal solution. 
    
    Condition \ref{cond:i} guarantees subdifferentiability of the perturbation function $\inf_x(\sup_z F_z)$ at the origin via \cref{prop:convex_duality}\ref{cond:slater}. Thus $\inf_x (\sup_z F_z)(0,x) = \max_{u^*} (\sup_z F_z)^{\sf d}(0, u^*)$ and hence \eqref{cond:pwdb_normality}, with existence of an optimal $u^*$. The existence of an accompanying collection of optimal $z_i \in \Z$ to the problem $\sup_{u^*,z} F_z^{\sf d}(0, u^*)$ follows from compactness of $\Z.$ We conclude $[\eqref{eq::P-W}=\eqref{eq::D-B}]$, with \eqref{eq::D-B} having an optimal solution. 
\end{proof}

We
remark that when \eqref{eq::P-W} is
feasible,
then a practical and easy sufficient condition for \ref{cond:ii} \cref{proposition: PW-DB Stability Traditional} is the existence of a $\delta > 0$ and $\bar z_i \in \Z$ for some $i \in (m)$ such that $\{x: f_i(x, \bar z_i) \leq \delta\}$ is bounded.

\subsubsection{\eqref{eq::P-W} = \eqref{eq::D-B} by Perturbing Lagrangians}
In this subsection, we restrict $X$ and $Z$ to finite-dimensional normed spaces and
design another perturbation scheme to reveal a sufficient condition for [\eqref{eq::P-W} = \eqref{eq::D-B}]—\cite{zhen2025unified}'s Theorem 5 (iii). 

This alternative scheme will in fact be a family $\{\mathcal{F}_z\}_{z \in Z^{[m]}}$ of one member; in other words, $\mathcal{F}_z \equiv \mathbb{F}$ for all $z \in Z^{[m]}$. Trivially, $(\sup_z \mathcal{F}_z) = \mathbbm{F}$ and $(\sup_z \mathcal{F}_z)^{\sf d} = \mathbbm{F}^{\sf d} = \sup_z \mathcal{F}_z^{\sf d}$, so
[\eqref{eq::P-W} = \eqref{eq::D-B}]
reduces to normality of the bifunction $\mathbbm{F}.$

Let $\Delta\coloneqq \linhull (\Z - \Z)$, and for any $x\in X$ and any $i\in[m]$, define the concave, closed 
bifunction
\begin{align*}
    G_x^i(\delta_i, z_i) \coloneqq f_i(x,z_i-\delta_{i0})
    - \iota_{\mathcal{Z}}(z_i-\delta_{i1})
\end{align*}
with perturbations $(\delta_{i0},\delta_{i1})\in\Delta\times\Delta$ and decision $z_i\in Z$ and taking values in $[-\infty,+\infty)$.
With $\mathcal{L}_x^i$ denoting the Lagrangian associated with $G_x^i$, it holds that
\begin{align}
    &\sup_{z_i} G_x^i(0,z_i)
    \label{eq:PW_upperbound}
    \!\leq\!
    \underset{\delta_{i}^*}{{\inf}}
    (G_x^i)_{\sf d}(\delta_i^*, 0) 
    \equiv
    \underset{\delta_{i}^*}{{\inf}} \sup_{z_i \in \Delta} 
    \sup_{\substack{\delta_{i}}} G_x^i(\delta_i, z_i) \!+\! \inner{\delta_{i0}^*}{\delta_{i0}} \!+\! \inner{\delta_{i1}^*}{\delta_{i1}}
    \\
    &\equiv \underset{\delta_{i}^*}{{\inf}} \sup_{z_i \in \Delta} 
    \mathcal{L}_x^i(\delta_i^*, z_i) = \underset{\delta_{i}^*}{{\inf}} \sup_{z_i\in \Delta} 
    \mathcal{L}_x^i(\delta_i^*, 0) + \langle \delta_{i0}^* + \delta_{i1}^*, z_i\rangle= \underset{\delta_{i}^*}{{\inf}} \bigl\{ \mathcal{L}_x^i(\delta_i^*,0) :{\delta_{i0}^*+\delta_{i0}^*=0} \bigr\}, \nonumber
\end{align}
where the first equality holds because $\mathcal{L}_x^i(\delta_i^*,z_i) = \mathcal{L}_x^i(\delta_i^*,0) + {\inner{\delta_{i0}^*+\delta_{i1}^*}{z_i}}$ for any $z_i\in \Delta$.
We also highlight that $\mathcal{L}_x^i(\delta_i^*,0)$ is closed and jointly convex in $(\delta_i^*,x)$, as it is the supremum of closed, jointly convex functions in $(\delta_i^*,x)$.
Consequently, for the convex bifunction $\mathbbm{F}$ with perturbations $u\coloneqq(\{x_i\}_{i=0}^m,\{w_i\}_{i=1}^m, \{\zeta_i^*\}_{i=0}^m)\in U$ for $U\coloneqq X^{[m]}\times \R^m \times \Delta^*$ and its dual $\mathbbm{F}^{\sf d}$ with prices $u^*\coloneqq(\{x_i^*\}_{i=0}^m,\{w_i^*\}_{i=1}^m, \{\zeta_i^{**}\}_{i=0}^m)\in U^*$,
\begin{align}
\label{eq:mathbbF}
\mathbbm{F}(u,(\delta^*,x))
&\coloneqq
(\mathcal{L}_{x - x_0}^0)(\delta_0^*
,0) + \sum_{i\in(m)} \iota_{(-\infty, w_i]}\bigl(\,(\mathcal{L}_{x - x_i}^i)(\delta_i^*
,0)\,\bigr)
+
\sum_{i\in[m]} \iota_{\{\zeta_i^*\}}(\delta_{i0}^* + \delta_{i1}^*)
\\\notag
\mathbbm{F}^{\sf d}((\delta^{**},x^*),u^*)
&\equiv \inf_{\substack{\{y_i,\delta_i^*\}_{i=0}^m}}
\mathcal{L}_{y_0}^0( \delta_{0}^*, 0) +
\langle \delta^*_{00}, \zeta_0^{**} - \delta_{00}^{**}
\rangle 
+
\langle \delta^*_{01}, \zeta_0^{**} - \delta_{01}^{**}
\rangle 
- \inner{x_0^*}{y_0} \\\nonumber
&\quad\qquad + \sum_{i\in(m)}
w_i^*\cdot\mathcal{L}_{y_i}^i( \delta_{i}^*, 0) +
\langle \delta^*_{i0}, \zeta_i^{**} - \delta_{i0}^{**}
\rangle 
+
\langle \delta^*_{i1}, \zeta_i^{**} - \delta_{i1}^{**}
\rangle 
- \inner{x_i^*}{y_i} \\
&\quad\qquad + \sum_{i\in(m)}\iota_{[0, +\infty)}(w_i^*) - \iota_{\{x^*\}}(\textstyle \sum_{j\in[m]} x_j^*)
\end{align}
where the equivalence follows from the definition of $\mathcal{L}^i_{x}$, changing variables $y_i\coloneqq x - x_i\in X$ to obtain the separable form.
For each $i \in (m),$ for $w_i^* > 0,$
\begin{align}
&\inf_{y_i}
\bar w_i^*\cdot
\Bigl[
\inf_{\delta_i^*}
\mathcal{L}_{y_i}^i( \delta_{i}^*, \bar \zeta_i^{**}/\bar w_i^*) +
\langle \delta^*_{i}, \delta_{i}^{**}/\bar w_i^*
\rangle
\Bigr]
- \inner{\bar x_i^*}{y_i} 
\overseteq{\eqref{eq:relation_Lag}}
\inf_{y_i}
w_i^*\cdot
\cl( G^i_{y_i}(\cdot,\zeta_i^{**}/ w_i^*))(\delta_i^{**}/w_i^*)
- \inner{x_i^*}{y_i}  
\notag
\\\notag 
&= \inf_{y_i} w_i^* [f_i(y_i, \zeta_i^{**}/ w_i^* - \delta_i^{**}/w_i^*) - \iota_{\mathcal{Z}}(z_i-\delta_{i1})] - \langle x_i^*, y_i \rangle \\
\label{eq:mathbbmFd}
&\overseteq{\eqref{eq:relation_r}}
-(h_i w_i^*)(\zeta_i^{**}-\delta_{i0}^{**}, x_i^*)
-\iota_{\{w_i^*\cdot\Z\}}(\zeta_i^{**}-\delta_{i1}^{**}),
\end{align}
and the same conclusion is found when $w_i^* = 0.$ The computation for $i = 0$ is similar.
In summary, we have a primal problem $\inf_{\delta^*,x}\mathbbm{F}(0,(\delta^*,x))$ and a dual problem $\sup_{u^*}\mathbbm{F}^{\sf d}(0,u^*)$ such that
\begin{align*}
    \inf_{\delta^*,x}\mathbbm{F}(0,(\delta^*,x))
    &\equiv
    \underset{\substack{x, \{\delta_i^*\}_{i=0}^m}}{\infimum}
    \Bigl\{
    {\cal L}_x^{0} ( \delta_{0}^*,0)
    \;\;\st\;\;
    {\cal L}_x^i( \delta_{i}^*, 0) \leq 0\;\;
    \text{and}\;\; \delta^*_{i0} + \delta^*_{i1} = 0,\;i\in[m]
    \Bigr\}
    \oversetgeq{\eqref{eq:PW_upperbound}} \eqref{eq::P-W};\\*
    \sup_{u^*}\mathbbm{F}^{\sf d}(0,u^*)
    &=\eqref{eq::D-B}.
\end{align*}

It is now clear that to obtain $[\eqref{eq::P-W}=\eqref{eq::D-B}]$, we may collect sufficient conditions for the relations $[\inf_{\delta^*,x} \mathbbm{F}(0,(\delta^*,x)=\eqref{eq::P-W}]$ and $[\inf_{\delta^*,x} \mathbbm{F}(0,(\delta^*,x))=\sup_{u^*}\mathbbm{F}^{\sf d}(0,u^*)]$. \cref{prop:convex_duality}\ref{cond:slater} provides clear direction for this; specifically, $0\in\intr\dom \sup_{z_i} G_x^i(\cdot,z_i)$ for all $x\in X$ for the former, and $0\in\intr\dom \inf_{x}\mathbbm{F}(\cdot,x)$ for the latter.
    In this way, the primal-worst-equals-dual-best result established in Theorem 5(iii) of \cite{zhen2025unified}, whose original derivation extends over six pages, is recovered through a single-paragraph perturbation argument.
\begin{proposition}[%
    Generalized
    \cite{zhen2025unified}'s Theorem 5 (iii)]
\label{prop: PerturbLagrangians}
        
    Consider the setting of \cref{def:pw_db,def:db}.
    If $X$ and $Z$ are finite-dimensional, normed vector spaces,
    and moreover, there exists $\bar u^*\coloneqq (\{\bar{x}_i^*\}_{i=1}^m, \{\bar{w}_i^*\}_{i=1}^m, \{\bar{z}_i\}_{i=0}^m)$ feasible to \eqref{eq::D-B} such that for each $i\in[m]$:
    $\bar{w}_i^* > 0$ with $w_0^*\coloneqq1$,
    $\bar{z}_i/\bar w_i^* \in \ri (\Z)$, and $(\bar x_i^*, \bar w_i^*, \bar z_i) \in \ri \dom g_i$, where $g_i(x_i^*, w_i^*, z_i)\coloneqq -(h_i w_i^*)(z_i, x_i^*) - \iota_{[0,+\infty)}(w_i^*)$;
    then $\eqref{eq::D-B} = \eqref{eq::P-W}$, and \eqref{eq::P-W} has an optimal solution.
    
\end{proposition}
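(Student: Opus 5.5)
We follow the roadmap laid out just before the statement. We work with the single convex bifunction $\mathbbm{F}$ of \eqref{eq:mathbbF}, for which $\sup_{u^*}\mathbbm{F}^{\sf d}(0,u^*)=\eqref{eq::D-B}$ (computed in \eqref{eq:mathbbmFd}) and $\inf_{\delta^*,x}\mathbbm{F}(0,(\delta^*,x))\geq\eqref{eq::P-W}$ (by \eqref{eq:PW_upperbound}). Weak duality already gives $\eqref{eq::P-W}\geq\eqref{eq::D-B}$. It therefore suffices to establish the chain
\[
\eqref{eq::D-B}=\sup_{u^*}\mathbbm{F}^{\sf d}(0,u^*)\;=\;\inf_{\delta^*,x}\mathbbm{F}(0,(\delta^*,x))\;\geq\;\eqref{eq::P-W}.
\]
Once the middle equality holds, we also obtain attainment on the primal side.

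\textbf{Step 1: reverse the roles of primal and dual.} The hypothesis is an interior-type condition on the dual-best side, not on \eqref{eq::P-W}. We therefore treat $\sup_{u^*}\mathbbm{F}^{\sf d}(0,u^*)$ as the ``primal'' problem of the concave bifunction $\mathbbm{F}^{\sf d}$. Its perturbations are the variables $(\delta^{**},x^*)$, and by finite dimensionality $(\mathbbm{F}^{\sf d})_{\sf d}=\cl\mathbbm{F}$. We apply \cref{prop:convex_duality}\ref{cond:slater}, in its finite-dimensional relative-interior form, to the concave perturbation function $(\delta^{**},x^*)\mapsto\sup_{u^*}\mathbbm{F}^{\sf d}((\delta^{**},x^*),u^*)$ at the origin.

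Using the separable formula \eqref{eq:mathbbmFd}, this perturbation function is, up to reparametrization, a sum of the concave functions $g_i$, together with the indicators of $\zeta_i^{**}-\delta_{i1}^{**}\in w_i^*\Z$ and of $\sum_j x_j^*=x^*$. Take the point $\bar u^*$ and set $\zeta_i^{**}=\bar z_i$. The conditions $\bar w_i^*>0$ and $\bar z_i/\bar w_i^*\in\ri\Z$ place $\bar z_i$ in the relative interior of the cone generated by $\{1\}\times\Z$, namely $\ri\{(w,z):z\in w\Z,\,w\geq0\}$. Combined with $(\bar x_i^*,\bar w_i^*,\bar z_i)\in\ri\dom g_i$, and with perturbations $\delta_{i0}^{**},\delta_{i1}^{**}$ ranging over the span $\Delta^{*}$, this shows that $0$ lies in the relative interior of the domain of the perturbation function. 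The only affine constraint, $\sum x_j^*=x^*$, is perturbed freely by $x^*$, which accounts for that direction.

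\textbf{Step 2: conclude.} Finite-dimensional Slater/Rockafellar (Theorem 30.4 or Corollary 31.2.1 of \cite{Rockafellar.70}) then yields
\[
\sup_{u^*}\mathbbm{F}^{\sf d}(0,u^*)=\min_{\delta^*,x}(\cl\mathbbm{F})(0,(\delta^*,x)),
\]
with the minimum attained. Next we check that $\mathbbm{F}$ is closed: each $\mathcal{L}^i_x(\delta_i^*,0)$ is closed and jointly convex, as noted before the statement, and the sublevel and affine indicators preserve closedness. Hence the minimum is of $\mathbbm{F}$ itself.

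\textbf{Step 3: close the gap with \eqref{eq::P-W}.} It remains to show that at a minimizer $(\delta^*,x)$ we have $\inf_{\delta_i^*}\{\mathcal L^i_x(\delta_i^*,0):\delta_{i0}^*+\delta_{i1}^*=0\}=F_i(x)$, that is, equality in \eqref{eq:PW_upperbound}. This is again perturbation duality, now for the concave bifunction $G_x^i$. The perturbation $\delta_{i1}$ translates the constraint set $\Z$, so $0\in\ri$ of the domain of $\delta_i\mapsto\sup_{z_i}G^i_x(\delta_i,z_i)$ within $\Delta\times\Delta$, because $f_i$ is finite everywhere and $\Z$ is nonempty. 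Hence there is no gap and the infimum over $\delta_i^*$ is attained. The minimizer $x$ then satisfies $F_i(x)\leq0$ and $F_0(x)=\eqref{eq::D-B}\leq\eqref{eq::P-W}$, so $x$ is optimal for \eqref{eq::P-W} and equality holds throughout.

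\textbf{Main obstacle.} We expect the hardest part to be the relative-interior bookkeeping in Step 1, namely verifying that the hypotheses on $\bar u^*$ translate exactly into $0\in\ri\dom$ of the dual perturbation function once the conic constraints $z_i\in w_i^*\Z$ and the coupling $\sum x_j^*=x^*$ are accounted for. We would handle this by computing $\ri$ of a sum and of a linear image via Theorems 6.5–6.6 of \cite{Rockafellar.70}.
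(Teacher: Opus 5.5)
Your proposal is correct at the paper's level of rigor and is essentially the paper's proof. It uses the same three steps: relative-interior Slater on the dual side at $(0,0,\bar u^*)\in\ri\dom\mathbbm{F}^{\sf d}$; closedness of $\mathbbm{F}$ plus finite-dimensional biduality, turning stability of $\mathbbm{F}^{\sf d}$ into an attained $\min_{\delta^*,x}\mathbbm{F}(0,(\delta^*,x))=\eqref{eq::D-B}$; and normality of the $G^i_x$ to connect with \eqref{eq::P-W}. Your full-domain justification of that last step (finite $f_i$, nonempty $\Z$) is simpler than the paper's use of $\bar z_i/\bar w_i^*\in\ri\Z$, and because you extract $x$ from an attained minimizer, weak duality in \eqref{eq:PW_upperbound} would already suffice there.

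One caveat applies equally to the paper: \cref{prop:convex_duality}\ref{cond:slater_ri} requires \eqref{eq::D-B} to be finite, and the hypotheses do not force this. For example, $m=1$, $X=Z=\R$, $\Z=[0,1]$, $f_0\equiv0$, $f_1\equiv1$ satisfies them with $\eqref{eq::D-B}=\eqref{eq::P-W}=+\infty$ and no optimal solution, so the attainment claim tacitly needs \eqref{eq::P-W} to be feasible.
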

\begin{proof}
In this proof, we show that for the family $\{\mathcal{F}_z\}_{z\in Z}$ with $\mathcal{F}_z \equiv \mathbb{F}$ in \eqref{eq:mathbbF}: $[\inf_{\delta^*,x} \mathbbm{F}(0,(\delta^*,x))=\eqref{eq::P-W}]$; and  $[\eqref{cond:pwdb_normality} \land \eqref{cond:pwdb_minimax}]$.

To show the former, it will suffice to establish that the collection of bifunctions $G^i_x$ all exhibit normality for all $i$ and $x.$ Towards this, we note that for any $i$ and $x,$ it holds that
\begin{align*}
(0, \bar{z}_i/w_i^*) \in \ri \left(\dom G^i_x = \left\{
(\delta_{i0}, \delta_{i1}, z_i) :
f_i(x, z_i - \delta_{i0}) > -\infty, \;\;z_i - \delta_{i1} \in \mathcal{Z}
\right\}\right), 
\end{align*}
so $0 \in \projection \ri (\dom G_x^i) = \ri \projection (\dom G^i_x)$, as desired for \cref{prop:convex_duality}\ref{cond:slater_ri} to be invoked.

What remains is to establish the latter. Seeing as how $[\eqref{cond:pwdb_minimax}]$ holds trivially, it remains to establish $[\eqref{cond:pwdb_normality}]$, the normality/stability of $\mathbbm{F}.$ In fact, in light of \cref{prop:convex_duality}\ref{cond:dualdual}, it will suffice to target the stability of the counterpart, $\mathbbm{F}^{\sf d}$. Observe that
\begin{align*}
\dom \mathbbm{F}^{\sf d} = \left\{\!
\begin{pmatrix}
  \delta^{**}\\ x^*\\ u^* 
\end{pmatrix}
:
\!
\begin{array}{lllll}
h_0(\zeta_0^{**}-\delta_{00}^{**}, x_0^*) < +\infty,
& \zeta_0^{**}-\delta_{01}^{**}\in \mathcal Z,
& x_0^*\in X^*
\\
(h_i w_i^*)(\zeta_i^{**}-\delta_{i0}^{**}, x_i^*) < +\infty, 
& \zeta_i^{**}-\delta_{i1}^{**}\in w_i^*\mathcal Z,
& x_i^* \in X^*,
& w_i^*\geq0,\,
i\in(m)\\ %
\sum_{i\in[m]} x_i^* = x^*
\end{array}
\!\!\!\right\}, 
\end{align*}
so by hypothesis $(0,0,\bar{u}^*) \in \ri(\dom \mathbb{F}^{\sf d})$. It follows that $(0,0) = \projection (0,0,\bar{u}^*) \in \projection \ri (\dom \mathbbm{F}^{\sf d})$. Given $X,Z$ are finite-dimensional Banach spaces, $\projection \ri (\dom \mathbbm{F}^{\sf d}) = \ri \projection (\dom \mathbbm{F}^{\sf d})$; hence, \cref{prop:convex_duality}\ref{cond:slater_ri} yields the conclusion.
\end{proof}

\section{Conclusions}
This paper leverages perturbation duality for the study of RO and DRO, demonstrating that it provides a natural and unifying foundation for deriving and proving duality results in these settings. We derived new results and insights for the recent DRO model of \cite{jiajin2025unifying}, showing how compactness assumptions can be replaced with perturbation-based conditions. We also showed that perturbations provide a unifying treatment of robust duality, or the ``primal-worst equals dual-best'' principle, as well as streamlined proofs that simplify and clarify existing results.

Taken together, these findings suggest that perturbation duality is a versatile and underutilized tool for RO and DRO. Perturbation-based treatments of the CM-OT special cases OT-, $\phi$-, and Sinkhorn-DRO, are provided in \cref{apx:expressiveness}, further illustrating how the framework can diagnose when regularity conditions such as integrability, closedness, and interchangeability are essential rather than merely technical.

\PaperAppendix

\section{Proofs from \cref{sec:lacking_interior}
} \label{apx: CM-OT Proofs}

\AssumptionOneLemma*
\begin{proof}[Proof of \cref{lem:v1=v2}]
   By definition, it is clear that $\eqref{eq:primal} \geq \eqref{eq:primal_explicit}$. Hence, it suffices to show that $\eqref{eq:primal} \leq \eqref{eq:primal_explicit}$. We first consider the case that $\eqref{eq:primal}\in\R$, then secondly consider the case that $\eqref{eq:primal}=+\infty$.
    
     Considering the case $\eqref{eq:primal}\in\R$,  
    let $\epsilon > 0$ be arbitrary. Then let $\mu_\epsilon$ be $\epsilon$-optimal to \eqref{eq:primal}, meaning $\mathbbm{E}_{\mu_\epsilon}[f] > \eqref{eq:primal} - \epsilon$. By the definition of infimum in $\mathbbm{M}_h$, there exists $\gamma_\epsilon\in\Gamma(\hat\mu,\mu_\epsilon)\cap\Gamma_\W$ satisfying
    \(
    \mathbbm{E}_{\gamma_\epsilon}[W-h(\hat V)\mid\hat V]=0,
    \;\hat\nu\text{-a.s.}
    \)
    and
    \(
    \mathbbm{E}_{\gamma_\epsilon}[c]\leq \rho+\epsilon.
    \)
    By \cref{assn:**a}, $a=\rho - \mathbbm{E}_{\gamma^0}[c]>0$ with
    {$\mathbbm{E}_{\gamma^0}[W^0\mid\hat V]-h(\hat V)=0$, $\hat\nu$-a.s.}
    Construct the mixture
    \(
    \bar\gamma_\epsilon\coloneqq(1-t)\gamma_\epsilon + t\gamma^0,
    \; \text{for}\;
    t\coloneqq \frac{\epsilon}{a+\epsilon}\in(0,1),
    \)
    which satisfies
    \(
    \mathbbm{E}_{\bar\gamma_\epsilon}[W-h(\hat V)\mid\hat V]=0,
    \;{\hat\nu\text{-a.s.}}
    \)
    and
    \(
    \mathbbm{E}_{\bar\gamma_\epsilon}[c]\leq\rho.
    \)
    Then $\bar\gamma_\epsilon$ is feasible to \eqref{eq:primal_explicit}, and computing its objective value, we observe that
    \begin{align*}
        \eqref{eq:primal_explicit}
        \geq
        \frac{a}{a+\epsilon} \mathbbm{E}_{\mu_\epsilon}[f] + \frac{\epsilon}{a+\epsilon}\mathbbm{E}_{\gamma^0}[f]
        >
        \frac{a}{a+\epsilon}(\eqref{eq:primal}-\epsilon) + \frac{\epsilon}{a+\epsilon}\mathbbm{E}_{\gamma^0}[f].
    \end{align*}
    Noting that $\mathbbm{E}_{\gamma^0}[f]>-\infty$ and that $\epsilon>0$ was arbitrary, taking $\epsilon\downarrow0$ yields \eqref{eq:primal_explicit} = \eqref{eq:primal}.

    Considering the case $\eqref{eq:primal}=+\infty$, choose feasible $\mu_{1/n}$ with $\mathbbm{E}_{\mu_{1/n}}[f]\geq n$ and apply the mixing construction above with $\epsilon=1/n$; since $\mathbbm{E}_{\gamma^0}[f]>-\infty$, we again find $\eqref{eq:primal_explicit}=\eqref{eq:primal}$ via
    \[
    \eqref{eq:primal_explicit} \geq 
     \frac{a}{a+1/n} \mathbbm{E}_{\mu_{1/n}}[f] + \frac{1/n}{a+1/n}\mathbbm{E}_{\gamma^0}[f] \geq \frac{(an^2+\mathbbm{E}_{\gamma^0}[f])}{(an+1)}\to+\infty = \eqref{eq:primal}.
    \]
\end{proof}

\AssumptionTwoLemma*
\begin{proof}[Proof of \cref{lem:**a_implies_**b}]
    Let $\ref{assn:h_W_int}\land\ref{assn:b}$ with $\beta>0$ be given.
    Recall that by \cref{assn:**a}, the coupling $(\hat V, \hat W, V^0, W^0) \sim \gamma^0\in\Gamma_{\hat\mu}\cap\Gamma_\W$ satisfies $\mathbbm{E}_{\gamma^0}[W-h(\hat V)\mid\hat V]=0$ and  $\mathbbm{E}_{\gamma^0}[c] = \rho-a$ with $a>0$.
    Define the coupling $(\hat V, \hat W, \bar V^+, \bar W^+) \sim \bar\gamma^+ \in\Gamma_{\hat\mu}\cap\Gamma_\W$ via
    \(
    (\hat V, \hat W, \bar V^+, \bar W^+) \coloneqq (\hat V, \hat W, V^0, h(\hat V) + \beta),
    \)
   so that $\mathbbm{E}_{\bar\gamma^+}[\bar W^+-h(\hat V)\mid \hat V]= \beta$ by construction.
    It follows from \ref{assn:b} that when 
    \(
    t_+ 
       \coloneqq
       \min\bigl\{
       1,\,
       a / \bigl[\mathbbm{E}_{\bar\gamma^+}[c]-\mathbbm{E}_{\gamma^0}[c]\bigr]_+
       \bigr\},
    \)
    the weighted combination $(\hat V, \hat W, V^+, W^+) \sim \gamma^+\coloneqq (1 - t_+)\gamma^0 + t_+ \bar\gamma^+$ satisfies
    \[
    \gamma^+ \in \Gamma_{\hat\mu}\cap\Gamma_\W, \quad \mathbbm{E}_{\gamma^+}\left[c\right] \leq \rho, 
    \quad
    \mathbbm{E}_{\gamma^+}[W^+ - h(\hat V)\mid\hat V] = \beta \cdot t_+,
    \quad\text{and}\quad
    \mathbbm{E}_{\gamma^+}[f^-(V^+, W^+)]<+\infty.
    \]
    We can construct a coupling $\gamma^-$ analogously with a mixture weight $t_- \in (0,1]$. Then \cref{assn:**b} holds with $b\coloneqq \beta\cdot\min\{t_+,t_-\}>0$.

    Finally, if \ref{assn:h_W_int} holds and $c(\hat v,\hat w,v,\cdot)$ and $f(v,\cdot)$ are $L$-Lipschitz, then
    \(
    c(\hat V,\hat W;V^0,h(\hat V)\pm\beta)
    \leq
    c(\hat V,\hat W;V^0,W^0)+L\,\abs{h(\hat V)\pm\beta - W^0}
    \)
    and
    \(
    f^-(V^0,h(\hat V)\pm\beta)
    \leq
    f^-(V^0,W^0)+L\,\abs{h(\hat V)\pm\beta - W^0}.
    \)
    Upon integrating, \ref{assn:b} is verified.
\end{proof}

\MainResult*
\begin{proof}[Proof of \cref{prop:main_result}]
    Given any $\gamma \in \Gamma_{\hat\mu}\cap\Gamma_\W$, it holds that $\mathbbm{E}_\gamma[W\mid \hat V] - h(\hat V) = (A\gamma)(\hat{V})$, for some function $A\gamma \in L^1(\hat\nu)$.
    We will find this shorthand convenient, as the statement $A\gamma = 0,$ $\hat \nu$-a.e., equivalently expresses the conditional moment constraint $\mathbbm{E}_\gamma[W\mid\hat V]=h(\hat V)$, almost surely. Further, 
    it is clear that given any two probability measures  $\gamma^1, \gamma^2 \in \Gamma_{\hat\mu}\cap\Gamma_\W,$
    it holds that 
    \(
    A\left((1-t)\gamma^1 + t\gamma^2\right) = (1-t) \cdot A\gamma^1 + t \cdot A\gamma^2,
    \)
    $\nu$-a.e., for all $t \in [0,1]$.

    Define a concave bifunction $G'(\tau, \theta; \gamma)$ over $\R \times L^1(\hat\nu) \times \mathcal{M}(\mathcal{U} \times \mathcal{U})$ for \eqref{eq:primal_explicit} by
    \[
    G'(\tau, \theta; \gamma) 
    \coloneqq
    \mathbbm{E}_\gamma[f(V,W)]
    - \iota_{(-\infty,\tau]}\bigl(
    \mathbbm{E}_{\gamma}[c(\hat V,\hat W;V,W)]-\rho
    \bigr)
    - \iota_{S_0 + \theta}(A\gamma) - \iota_{\Gamma_{\hat\mu}\cap\Gamma_{\W}}(\gamma)
    \]
    where
    \(
    S_0
    \coloneqq
    \{s \in \R^{\V} \text{, $\mathcal{G}$-measurable} : s = 0, \; \hat\nu \text{-a.s.}\}.
    \)
    Let $p'(\tau, \theta)\coloneqq \sup_{\gamma \in \mathcal{M}(\mathcal{U} \times \mathcal{U})} G'(\tau, \theta; \gamma)$ denote the perturbation function of $G'$ for which $p'(0,0) = \eqref{eq:primal_explicit}$.
    The dual bifunction $G'_{\sf d}(\gamma^*; \tau^*, \theta^*)$ over $\mathcal{M}(\mathcal{U} \times \mathcal{U})^* \times \R \times L^\infty(\hat\nu)$ can be computed under $\gamma^*= 0$ to find 
    \[
    G'_{\sf d}(0; \tau^*, \theta^*)
    =
    \displaystyle \sup_{\gamma \in \Gamma_{\hat\mu}\cap\Gamma_{\W} } \tau^* \cdot \bigl(\rho - \mathbbm{E}_\gamma[c(\hat V,\hat W;V,W)]\bigr) +
    \mathbbm{E}_\gamma[f(V,W)]
    - \medint{\theta^*}\cdot{A\gamma}\;d\hat\nu,\;\;
    \text{if $\tau^* \geq 0$};
    \]
    and $G'_{\sf d}(0; \tau^*, \theta^*)=-\infty$ if $\tau^*<0$. 
    Hence
    $q'(0)\coloneqq \inf_{\tau^*, \theta^*} G'_{\sf d}(0; \tau^*, \theta^*) = \dualPsiRef$. 
    
    The perturbation function $p'$ is in fact proper. Indeed, by \Cref{ass:local-conditional-ui}, there exists
    $\delta>0$ such that $
        T_\delta \coloneqq
        \sup_{\gamma\in\mathcal N_\delta} \int f_\gamma^+\,d\hat\nu
        =\sup_{\gamma\in\mathcal N_\delta}
        \mathbbm E_\gamma[f^+]
        < + \infty;%
    $
    consequently, $p'(\tau,\theta)$ is bounded above on a neighborhood of $(0,0).$ 
    Moreover, the coupling $\gamma^0 \in N_\delta$ afforded by \Cref{assn:**a} is feasible to
    $p'(0,0)$, satisfies $\mathbb{E}_{\gamma^0}[f^-]<+\infty$, and has just been shown to also satisfy $\mathbb{E}_{\gamma^0}[f^+]<+\infty$, so that 
    $\mathbb{E}_{\gamma_0}[f] \in \reals$ lower bounds $p'(0,0),$ establishing $p'(0,0) \in \reals$. 
    Finally, the concavity of $p'$ and its local upper-boundedness combine to imply
    that in fact $p'< +\infty$ everywhere, confirming properness so that the closure of $p'$ at 0 admits the following characterization that we will leverage: $(\cl p')(0,0) =  \limsup_{(\tau,\theta)\to0}p'(\tau,\theta)$.

    \textbf{Outline:}
    By the characterization of $(\cl p')(0,0)$, it suffices to establish the final inequality in
    \begin{equation}
    \label{eq:main_result_goalpost}
    p'(0,0) \leq (\cl p')(0,0) = \limsup_{(\tau,\theta)\to0}p'(\tau,\theta) \leq p'(0,0).
    \tag{$*$}
    \end{equation}
    Indeed, $p'(0,0) = (\cl p')(0,0)$ if and only if $p'(0,0) = q'(0)$ \citep[Proposition III.2.1]{Ekeland.99}, and recalling $ q'(0) = \eqref{eq:dual_Psi}$ and noting $p'(0,0)=\eqref{eq:primal}$ by \cref{lem:v1=v2} yields the conclusion.

    Towards this, we assume without loss of generality that there exists a sequence $\{(\tau_r,\theta_r)\}_{r=1}^\infty\subseteq\R\times L^1(\hat\nu)$ with 
    $(\tau_r,\theta_r)\to 0$
    strongly and for which
    $p'(\tau_r,\theta_r)>-\infty$ for every $r$; otherwise, $p'(0,0) \geq -\infty = \limsup_{(\tau, \theta) \rightarrow 0} p'(\tau, \theta)$ trivially. 
    An immediate consequence of this sequence $\{(\tau_r,\theta_r)\}_{r=1}^\infty$ is the existence of a sequence of measures $\{\gamma^r\}_{r=1}^\infty$ in which for every $r$ it holds that $(\hat V, \hat W, V^r, W^r) \sim \gamma^r$ is a feasible solution to the perturbed problem $p'(\tau_r, \theta_r)$ with objective value satisfying 
    \(
    \mathbbm{E}_{\gamma^r}[f(V^r, W^r)]
    \geq p'(\tau_r, \theta_r) - 1/r,
    \)
    
    The strategy will be to show that for any $\epsilon > 0$, there exists an accompanying sequence $\{\tilde\gamma^r\}_{r=1}^\infty$ ($\epsilon$ dependence suppressed) for which $(\hat V, \hat W, \tilde V^r, \tilde W^r) \sim \tilde\gamma^r$ is feasible to $p'(0,0)$ and satisfies
    \begin{align}
        \label{starstar}
        \mathbbm{E}_{\gamma^r}[f(V^r, W^r)] - \mathbbm{E}_{\tilde\gamma^r}[f(\tilde V^r, \tilde W^r)] \leq o(1) + \epsilon,\quad 
        \tag{$**$}
    \end{align}
    which will ensure that
    \(
    p'(\tau_r,\theta_r)-1/r \leq
   \mathbbm{E}_{\gamma^r}[f(V^r, W^r)]
    \leq
    p'(0,0) + o(1) + \epsilon
    \)
    for all $\epsilon>0$,
    yielding
    \eqref{eq:main_result_goalpost}
    and the completion of the proof.
    In what follows, we let $\epsilon>0$ be given and 
    proceed in two steps:
    (1) constructing a feasible $\{\tilde\gamma^r\}_{r=1}^\infty$ sequence; and (2) verifying that it satisfies the asymptotic optimality condition \eqref{starstar}.

    \textbf{(1) Feasibility:}
    For each $r$, $\tilde\gamma^r$ will be the result of two successive edits to $\gamma^r$.
    For the first edit, we mix $\gamma^r$ with $\gamma^0$ in a precise way that depends on $\epsilon$ to attain $\bar\gamma^r$ via 
    \[
    \bar \gamma^r \coloneqq (1 - t_r)\, \gamma^r + t_r\, \gamma^0, \quad t_r \coloneqq \frac{\tau_r + a \cdot \kappa_\epsilon}{\tau_r + a}, \quad \kappa_\epsilon \coloneqq \min\left(\frac{1}{2},\frac{\epsilon}{2\left(T_\delta - \mathbb{E}_{\gamma^0}\left[f\right] + 1\right)}\right).
    \]
    By design, this yields
    \begin{align*}
        &
        \mathbbm{E}_{\bar\gamma^r}[c] \leq \rho- a \cdot \kappa_\epsilon,
        \;\;\;
        \norm{A\bar \gamma^r}_{L^1(\hat\nu)} \leq \frac{a-a \cdot \kappa_\epsilon}{\tau_r+a}\norm{\theta_r}_{L^1(\hat\nu)},
        \;\;\;\text{and}\;\;\;
        {\rm d}_{\rm TV}(\gamma^r,\bar \gamma^r)\leq \frac{\tau_r+a \cdot \kappa_\epsilon}{\tau_r+a}.
    \end{align*}
    For the second edit, we will modify $\bar \gamma^r$ using $\gamma^+$ and $\gamma^-$. 
    In the following, given any $\gamma\in\Gamma_{\hat \mu}$, we will let the collection $\{\gamma_{\hat v}(\cdot)\}_{\hat v \in \V}$ denote a collection of measures on $(\W \times \U, \mathcal{B}\times \mathcal{F})$ such that $\gamma(D\times E) = \int_D \gamma_{\hat v}(E) \,d\hat\nu$ for all $D \in \mathcal{G}$, $E \in \mathcal{B} \times \mathcal{F}$. 
    Such a collection is guaranteed to exist by the assumption that $(\V, \mathcal{G}, \hat\nu)$ has the regular conditional probability property \citep{faden1985rcp}.
    In this way, for $\bar \gamma^r$, let there be given an associated collection $\{\bar\gamma^r_{\hat v}\}_{\hat v \in \V}$, and we will edit $\bar\gamma^r$ by editing this collection.
    For each $\hat v\in\V$, define the pointwise mixing weights $t^+(\hat v),t^-(\hat v)\in[0,1]$
    \begin{align*}
        t^+_r({\hat v})
        \coloneqq
        \frac{\max\{0, -(A\bar\gamma^r)(\hat v)\}}{\abs{(A\bar \gamma^r)(\hat v)} + \abs{(A\gamma^+)(\hat v)}},\quad
        t^-_r({\hat v})
        \coloneqq
        \frac{\max\{0, (A\bar \gamma^r)(\hat v)\}}{\abs{(A\bar \gamma^r)(\hat v)} + \abs{(A\gamma^-)(\hat v)}},\quad
    \end{align*}
    so that at most one of $t_r^+({\hat v}),t_r^-({\hat v})$ is nonzero.
    Now define the edit $\tilde\gamma^r$ via the collection of regular conditional probabilities $\{\tilde\gamma^r_{\hat v}\}_{\hat v \in \V}$, wherein for each $\hat v \in \V$,
    \[
    \tilde\gamma^r_{\hat v}
    \coloneqq
    (1-t^+_r({\hat v})-t^-_r({\hat v}))\,\bar\gamma^r_{\hat v} + t^+_r({\hat v})\, \gamma^+_{\hat v} + t^-_r({\hat v})\, \gamma^-_{\hat v}.
    \]
    By design, $A\tilde\gamma^r=0$ for each $r$.
    We now show that
    \(
    \mathbbm{E}_{\tilde\gamma^r}[c]\leq\rho
    \)
    for sufficiently large $r$, establishing that $\tilde\gamma^r$ is eventually feasible to \eqref{eq:primal_explicit}.
    To see this, we assume without loss of generality that $c \geq 0$ (otherwise, replace $c$ with $c - \inf c$) and let $K_\epsilon$ be such that both 
    \(
    \int_{[\int c\, d\gamma^\pm_{\hat v}] > K_\epsilon} \, [\int c\, d\gamma^\pm_{\hat v}]\,d\hat\nu < a \kappa_\epsilon/4,
    \)
    as ensured by the absolute continuity of the Lebesgue integral.
    We find that for any $r,$
    \begin{align*}
        &\mathbbm{E}_{\tilde \gamma^r}[c]
        =
        \medint \, (1-t^+_r - t^-_r) [\medint c\, d\bar\gamma^r_{\hat v}]\,d\hat\nu
        +
        \medint \, t^+_r [\medint c\, d\gamma^+_{\hat v}]\,d\hat\nu
        +
        \medint \, t^-_r [\medint c\, d\gamma^-_{\hat v}]\,d\hat\nu 
        \\
        &\leq \bigl(\rho - a \kappa_\epsilon\bigr) +  \left(\tfrac{a\kappa_\epsilon}{4} + K_\epsilon\medint t_r^+ d\hat \nu\right) +  \left( \tfrac{a\kappa_\epsilon}{4} + K_\epsilon\medint t_r^- d\hat\nu \right)
        = \rho - a\kappa_\epsilon /2 + K_\epsilon \medint \max(t^+_r,t^-_r) d\hat \nu\\
        &= \rho - a\kappa_\epsilon /2 + o(1),
    \end{align*}
    since
    \begin{align*}
    \label{relation:t_theta}
    \medint \max(t_r^+, t_r^-)\,d\hat\nu
    \leq
    \int \frac{|(A\bar\gamma^r)(\hat v)|}{\abs{(A\bar \gamma^r)(\hat v)} + b} d\hat \nu
    \leq
    \frac{1}{b} \|A\bar\gamma^r\|_{L^1(\hat\nu)}
    \leq
    \frac{1}{b}\left(\frac{a-a\kappa_\epsilon}{\tau_r+a}\norm{\theta_r}_{L^1(\hat\nu)}\right) = o(1).
    \tag{$\dagger$}
    \end{align*}
    Thus we find $\mathbbm{E}_{\tilde\gamma^r}[c]\leq\rho$ for $r$ sufficiently large.
    Consequently, we proceed under the assumption that at the conclusion of these two edits, the measure $\tilde \gamma^r$ that we obtain is feasible to \eqref{eq:primal_explicit}.

    \textbf{(2) {Asymptotic optimality}:}
    From the the two edits $\bar\gamma^r$ and $\tilde\gamma^r$ we may verify \eqref{starstar} in two parts:
    \begin{align*}
        \mathbbm{E}_{\gamma^r}[f(U^r)]
        -
        \mathbbm{E}_{\tilde\gamma^r}[f(\tilde U^r)]
        &=
        \overbrace{
        \mathbbm{E}_{\gamma^r}[f(U^r)] - \mathbbm{E}_{\bar\gamma^r}[f(\bar U^r)]
        }^{\text{Edit 1}}
        +
        \overbrace{
        \mathbbm{E}_{\bar\gamma^r}f(\bar U^r) - \mathbbm{E}_{\tilde\gamma^r}f(\tilde U^r)
        }^{\text{Edit 2}}.
    \end{align*}
    Regarding (Edit 1), we easily see from the construction of $\bar{\gamma}^r$ and $\kappa_\epsilon$ that 
    \[
    \text{(Edit 1)} = t_r \left(\mathbbm{E}_{\gamma^r}[f(U^r)] - \mathbbm{E}_{\gamma^0}[f(U^0)]\right) \leq t_r\left(T_\delta - \mathbbm{E}_{\gamma^0}[f(U^0)] \right) \leq 
    \epsilon/2 + o(1).
    \]
    Regarding (Edit 2), by construction of $\tilde\gamma^r_{\hat v}$, we obtain:
\begin{align*}
    &\text{(Edit 2)}
    =
    \int
    [t_r^+(\hat v)+t_r^-(\hat v)]
    \left[\int f\,d\bar\gamma^r_{\hat v}\right]
    d\hat\nu
    -
    \int
    t_r^+(\hat v)
    \left[\int f\,d\gamma^+_{\hat v}\right]
    d\hat\nu
    -
    \int
    t_r^-(\hat v)
    \left[\int f\,d\gamma^-_{\hat v}\right]
    d\hat\nu
    \\
    &\le
    \underbrace{\int \max\{t_r^+, t_r^-\} \cdot f_{\bar\gamma^r}^+\,d\hat\nu}_{(a)}
    +
    \underbrace{\int
    t_r^+(\hat v)
    \left[\int f^-\,d\gamma^+_{\hat v}\right]
    d\hat\nu}_{(b)}
    +
    \underbrace{\int
    t_r^-(\hat v)
    \left[\int f^-\,d\gamma^-_{\hat v}\right]
    d\hat\nu}_{(c)}= o(1) + \epsilon/2,
\end{align*}
 by \eqref{relation:t_theta} and by Assumptions \ref{assn:**b} and \ref{ass:local-conditional-ui} providing an $M_\epsilon > 0$ such that for sufficiently large $r,$
\[
\max\Bigl(
\medint_{m^+_{\bar\gamma^r} > M_{\epsilon}} f^+_{\bar\gamma^r}d\hat\nu,
\medint_{[\int f^-\,d\gamma^-_{\hat v} ]> M_\epsilon} [\medint f^-\,d\gamma^-_{\hat v} ]d\hat\nu,
\medint_{[\int f^-\,d\gamma^+_{\hat v}] > M_\epsilon} [\medint f^-\,d\gamma^+_{\hat v}]d\hat\nu
\Bigr) < \epsilon/6,
\]
giving the desired conclusion.
\end{proof}

\LInfinityToCb*
\begin{proof}[Proof of \cref{thm:Cb_Linfty}]
Let $\Gamma_c \coloneqq \{\gamma\in\Gamma_{\hat\mu}\cap\Gamma_\W: \mathbbm E_\gamma[c]<+\infty\}$ and set
\[
\mathcal J(\lambda,\psi) \coloneqq \lambda\rho + \sup_{\gamma\in\Gamma_c} \mathbbm E_\gamma[ f-\lambda c-\psi(\hat V)(W-h(\hat V)) ].
\]
If $\Gamma_c=\varnothing$, both dual values are $-\infty$; hence suppose otherwise.
Since $\mathcal C_b(\V)\subseteq L^\infty(\hat\nu)$, $(\dualLinfty)\leq(\dualCb)$, so it remains to prove the reverse inequality.
Fix $\lambda\geq0$ and $\psi\in L^\infty(\hat\nu)$, set $K\coloneqq\norm{\psi}_\infty$, and without loss of generality assume $\abs\psi\leq K$.
By Lusin's and Tietze Extension theorems, for each $\delta>0$ there exists a closed $C_\delta\subseteq\V$ and $\psi'_\delta\in\mathcal C_b(\V)$ such that $\hat\nu(C_\delta^\complement)<\delta$, $\psi'_\delta=\psi$ on $C_\delta$, and $\norm{\psi'_\delta}_\infty\leq K$.
In particular, $\abs{\psi-\psi'_\delta}\leq 2K\ind_{C_\delta^\complement}$. 
We proceed to show that $\mathcal J(\lambda, \psi_\delta') - \mathcal J(\lambda, \psi) = o(1)$, as $\delta \downarrow 0$. If $K = 0$, this is immediate, so assume $K > 0$. Under \textnormal{(ii)}, this is seen by noting for any $\Delta > 0$, 
\begin{align*}
    &\mathcal J(\lambda + \Delta, \psi_\delta') - \mathcal J(\lambda, \psi) \leq \Delta\rho + \sup_{\gamma \in \Gamma_c} \mathbb{E}_{\gamma}\left[-\Delta c(\hat U; U) + (\psi - \psi_\delta')W - (\psi - \psi_\delta')h(\hat V)\right]\\
    &\leq \Delta\rho + \sup_{\gamma \in \Gamma_c} \mathbbm{E}_\gamma[-\Delta  c] + 
     2K\cdot \mathbb{E}_\gamma\left[ a_{\frac{\Delta}{2K}}(\hat U) + \frac{\Delta}{2K} c(\hat U; U); \hat V \in C_\delta^c\right] + 2K \cdot \mathbb{E}_{\hat\nu}\left[|h(\hat V)|; \hat V \in C_\delta^c\right]\\
     &\leq \Delta\rho + \mathbb{E}_{\hat\mu}\bigl[\Delta a_1(\hat U)\bigr] +
     2K\mathbb{E}_{\hat\mu}\left[ a_{\frac{\Delta}{2K}}(\hat\U) ; \hat V \in C_\delta^c\right] + o(1) = \Delta\left(\rho + \mathbb{E}_{\hat\mu}\left[ a_1(\hat U)\right]\right) + o(1), \tag{$\delta \downarrow 0$}
\end{align*}
where the final inequality follows from the inequality $0\leq a_1(\hat u) + c(\hat u;u)$, and upon taking $\Delta \downarrow 0$, we obtain the desired approximation. Under \textnormal{(i)}, this is attained by leveraging uniform integrability and $h\in L^1(\hat\nu)$ via
\begin{align*}
\mathcal J(\lambda,\psi'_\delta)
-
\mathcal J(\lambda,\psi)&\leq 
2K \sup_{\gamma \in \Gamma_c} \int_{C_\delta^c} \mathbb{E}\left[|W| \; \mid\; \hat V \right] d\hat\nu +2K \cdot \mathbb{E}_{\hat\nu}\left[|h(\hat V)|; \hat V \in C_\delta^c\right]
= o(1)
\qquad(\delta\downarrow0).
\end{align*}
In any case, taking infima over $\lambda, \psi$ (along a minimizing sequence if $(\dualLinfty)=-\infty$) then gives $(\dualCb)\leq(\dualLinfty)$; if
$(\dualLinfty)=+\infty$, the initial inequality already gives
equality, proving the result.
\end{proof}

\ifPaperJRNL
  \section*{Acknowledgments}
  
  \theendnotes
  
  \bibliographystyle{\PaperBibStyle}
  \bibliography{arxiv-submissions/2026_07_31/refs}%
\fi

\ifPaperJRNL
    \PaperEndAppendix
    
    \ECRRHFirstLine{\bf Authors' names not included for peer review}
    \ECRRHSecondLine{\fs.7.9.\rm Electronic companion submitted to {\it\theJOURNAL}}
    \ECLRHFirstLine{\bf Authors' names not included for peer review}
    \ECLRHSecondLine{\fs.7.9.\rm Electronic companion submitted to {\it\theJOURNAL}}

    \renewcommand{\theHsection}{EC.\arabic{section}}
    \renewcommand{\theHsubsection}{EC.\arabic{section}.\arabic{subsection}}
    \renewcommand{\theHsubsubsection}{EC.\arabic{section}.\arabic{subsection}.\arabic{subsubsection}}
        
    \ECSwitch

    \newcommand{\ECProofEndSymbol}{%
    \ensuremath{\diamond\diamond}%
    }
    
    \renewenvironment{proof}[1][Proof]{%
    \INFORMSproof{#1.}%
    }{%
    \ifhmode
        \unskip\nobreak\hfill\ECProofEndSymbol\par
    \else
        \noindent\hfill\ECProofEndSymbol\par
    \fi
    \INFORMSendproof
    }

    \crefformat{section}{#2#1#3}
    \crefformat{subsection}{#2#1#3}
    
    \ECHead{Electronic Companion}
\fi

\section{Mathematical Background Review: Convex Duality via Perturbations}
\label{sec: PerturbationPreliminaries}
Here we recall some standard theory and results from convex analysis, in particular, perturbation duality \citep{Rockafellar.70,ponstein1980approaches,Ekeland.99,zalinescu2002convex}

\subsection{Convex Duality}
\label{sec:perturbation}
In the following proposition, we record a summary of duality relations—for reference, \citet{Ekeland.99} 
and \cite[Theorem 2.6.1]{zalinescu2002convex}.
\begin{proposition}[Convex duality]
    \label{prop:convex_duality}
    In the above setting, the following statements hold:
\begin{enumerate}[parsep=0pt,topsep=0pt]
    \item[{\rm (a)}]
    \setitemlabel{(a)}
    \label{convex_duality:weak}
    \emph{Weak duality (\citet[Prop.~III.1.1]{Ekeland.99}):} $p(0) \geq (\cl p)(0) = q(0)$;
    
    \item [{\rm (b)}]
    \setitemlabel{(b)}
    \label{convex_duality:normality}
    \emph{Normality (zero duality gap) (ib.~Prop.~III.2.1):} 
    $p(0) = q(0)$
    iff
    $(\cl p)(0) = p(0)$;

    \item [{\rm (c)}] 
    \setitemlabel{(c)}
    \label{convex_duality:stability}
    \emph{Stability (strong duality) (ib.~Prop. III.2.2):}
    $\partial p(0)$ is the set of optimal solutions to the dual problem $q(0)$; 
    in particular, if $\partial p(0) \neq \varnothing$, then $p(0) = q(0)$;
    \begin{enumerate}
        \item[{\rm (c$^*$)}]
        \setitemlabel{(c$^*$)}
        \label{cond:slater}
        \emph{Interior Slater (ib. Prop.~I.2.5, III.2.3):} if $0\in\intr\dom p$
        and $p(0)\in\R$,
        then $\partial p(0) \neq \varnothing$.

        \item[{\rm (c$^{**}$)}]
        \setitemlabel{(c$^{**}$)}
        \label{cond:slater_ri}
        \emph{Relative interior Slater:} if
        $0\in\ri\dom p$ and $p(0)\in\R$, then $\partial p(0)\neq\varnothing$.
    \end{enumerate}

    \item[{\rm (d)}]
    \setitemlabel{(d)}
    \label{cond:dualdual}
    $(F^{\sf d})_{\sf d} = F$ when $F$ is closed and $X,U$ are reflexive, then the preceding holds for the convex bifunction $-F^{\sf d}.$ In words, the dual of the dual is the primal.
    \end{enumerate}
        
\end{proposition}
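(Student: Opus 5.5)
The statement is a catalogue of classical facts, so the plan is to derive everything from one identity, the conjugacy formula for the perturbation function, and then cite \citet{Ekeland.99}, \citet{Rockafellar.70} and \cite{zalinescu2002convex} for the standard analytic inputs.

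\textbf{Step 1: the identity $q(0)=p^{**}(0)$.} Unwinding the definition of $F^{\sf d}$ gives
\[
F^{\sf d}(0,u^*)=\inf_{u}\Bigl\{\inf_x F(u,x)+\langle u^*,u\rangle\Bigr\}=-p^*(-u^*).
\]
Hence $q(0)=\sup_{u^*}\{-p^*(-u^*)\}=p^{**}(0)$. By Fenchel--Moreau, $p^{**}=\cl p$, with the convention that $\cl p\equiv-\infty$ whenever the lower semicontinuous hull takes the value $-\infty$ somewhere. Together with $\cl p\le p$, this yields (a). Part (b) is then immediate, since $p(0)=q(0)$ holds exactly when $p(0)=(\cl p)(0)$.

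\textbf{Step 2: part (c).} A dual point $u^*$ is optimal with value $p(0)$ exactly when $p(0)+p^*(-u^*)=\langle -u^*,0\rangle$. This is the Fenchel--Young equality, which holds if and only if $-u^*\in\partial p(0)$. Up to the sign convention $u^*\leftrightarrow-u^*$ that the paper absorbs, this identifies the dual solution set with $\partial p(0)$. Moreover, if $\partial p(0)\neq\varnothing$, then $p$ has an affine minorant exact at $0$, so $p(0)=(\cl p)(0)$ and (b) applies.

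\textbf{Step 3: the Slater variants.} For (c$^{**}$), everything is finite dimensional. If $p(0)\in\R$ and $0\in\ri\dom p$, then $p$ is proper on $\aff\dom p$: a convex function that is finite at a relative interior point cannot take the value $-\infty$. Then \citet[Theorem 23.4]{Rockafellar.70} gives $\partial p(0)\neq\varnothing$. For (c$^*$) I will follow \citet[Prop.~I.2.5, III.2.3]{Ekeland.99}. The route is to show that $p$ is continuous at $0$, after which the Hahn--Banach separation of $\operatorname{epi}p$ from $(0,p(0)-\varepsilon)$ produces a nonvertical supporting hyperplane, that is, a subgradient.

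\textbf{Main obstacle.} In infinite dimensions, $0\in\intr\dom p$ alone does not give continuity. I would first use the finite-dimensional case, where convex functions are continuous on the interior of their domain. In Banach spaces with $F$ closed, I would invoke \cite[Theorem 2.6.1]{zalinescu2002convex}, a Robinson--Ursescu-type result, which shows that $p$ is bounded above near $0$ and therefore continuous there. This is the step where the statement's ``in the above setting'' must carry the implicit closedness and completeness hypotheses.

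\textbf{Step 4: part (d).} On reflexive spaces, $(F^{\sf d})_{\sf d}$ is just the biconjugate of the closed convex function $F$ after the sign flips, so Fenchel--Moreau gives $(F^{\sf d})_{\sf d}=F$. Applying (a)--(c) to the convex bifunction $-F^{\sf d}$ then returns the original primal as the dual of the dual.
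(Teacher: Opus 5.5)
Your route is the standard argument behind the results the paper cites from Ekeland--Temam (Chapters~I and~III), and the paper gives no argument beyond those citations. That route is: $F^{\sf d}(0,u^*)=-p^*(-u^*)$, hence $q(0)=p^{**}(0)=(\cl p)(0)$, then Fenchel--Young for (c), Rockafellar's Theorem~23.4 for (c$^{**}$), continuity plus Hahn--Banach for (c$^{*}$), and Fenchel--Moreau for (d). The caveats you add are needed, not cosmetic, because the statement as written omits them.
\begin{itemize}
\item In general the dual optimal set is $-\partial(\cl p)(0)$ (when $(\cl p)(0)$ is finite). This can be nonempty while $\partial p(0)=\varnothing$. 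Take $p=+\infty$ on $(-\infty,0)$, $p(0)=1$, $p=0$ on $(0,\infty)$: every $u^*\geq0$ is dual optimal with value $0<p(0)$. So keep ``optimal with value $p(0)$'' in (c).
\item (c$^{**}$) needs finite dimension.
\item (c$^{*}$) is false in an arbitrary normed space. With $F(u,x)\coloneqq\ell(u)$ and $\ell$ a discontinuous linear functional, $0\in\intr\dom p$ and $p(0)=0$, yet $\partial p(0)=\varnothing$. So you must import either your completeness-plus-closedness hypothesis or the continuity hypothesis of Ekeland--Temam's Prop.~III.2.3.
\end{itemize}

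One step fails as written. Separating $\operatorname{epi}p$ from the point $(0,p(0)-\varepsilon)$ gives only an affine minorant $a$ of $p$ with $a(0)>p(0)-\varepsilon$, i.e.\ an $\varepsilon$-subgradient. That re-proves normality, not $\partial p(0)\neq\varnothing$.

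The fix is to separate the point $(0,p(0))$ from the open convex set $\intr\operatorname{epi}p$, which is nonempty because $p$ is bounded above near $0$.
\begin{itemize}
\item This yields $(u^*,\alpha)\neq0$ with $\langle u^*,u\rangle+\alpha t\geq\alpha p(0)$ on $\operatorname{epi}p$.
\item Letting $t\to\infty$ gives $\alpha\geq0$.
\item $\alpha\neq0$, since otherwise $\langle u^*,\cdot\rangle\geq0$ on a ball about $0$ would force $u^*=0$.
\item Hence $-u^*/\alpha\in\partial p(0)$.
\end{itemize}
Alternatively, keep your $\varepsilon$-subgradients: the local upper bound on $p$ makes them norm-bounded, and any weak$^*$ cluster point as $\varepsilon\downarrow0$ is a subgradient.
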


In summary, a convex bifunction $F$ yields a pair of primal and dual optimization problems that are equal in value (respectively, equal with a dual solution) if and only if the corresponding perturbation function 
$p$ is closed at $0$ (respectively, subdifferentiable at 
$0$).

We remark that although the use of \cref{prop:convex_duality}\ref{cond:slater} is a common strategy to argue for (strong) duality, it isn't always a viable strategy in infinite dimensional settings. 
Indeed, many important sets, particularly those defined by abstract constraints (that will not be relaxed/perturbed), lack interiors.
For example, it is readily verified that the space of probability measures has no interior in the space of finite signed measures under the TV-norm and weak-* topologies. %
Some recent works \citep{zalinescu2015qri,vancuong2022polyhedral,vancuong2023generalized,vancuong2025dualitylagrangevector} have provided possible remedies via the notion of generalized interior,
which guarantee either primal or dual solution existence.

\section{A Reader's Guide to Normal Integrands in \cref{prop:ip}}
\label{apx:normalIntegrand_detail}
Here we provide some guidance on \cref{prop:ip} for the reader
unfamiliar with normal integrands.
Specifically, we first verify a technical property known as \emph{decomposability} before applying standard theory \cite{rockwets}.

\InterchangeabilityPrinciple*
\begin{proof}[Guide to \cref{prop:ip}]
    In this setting, let $\V$ and $\W$ be equipped with the standard (subspace) topologies from $\reals^n$ and $\reals$ respectively. Then recall $\U \coloneqq \V \times \W \subseteq \mathbbm{R}^{n+1}$ is a closed set; further, $\U$ is paired with the $\sigma$-algebra $\mathcal{F}$.
    In what follows, given any function $x: \mathcal{U} \rightarrow \reals^{n+1}$, we write $x(\hat u)$ as $\left(x_\V(\hat u),  x_\W (\hat u)\right) \in \reals^n \times \reals$ whenever $\hat u \in \V\times\W$.  Given $\lambda \geq 0$ and $\psi \in \Psi$, we define the jointly-measurable
    function $F_{\lambda, \psi}: \U \times \mathbbm{R}^{n+1} \rightarrow \Reals$ via
    \(
    F_{\lambda, \psi}(\hat u; u) \coloneqq f(u) - \psi(\hat v)\cdot ( w - h(\hat v))  - \lambda c(\hat u; u) - \iota_{\U}(u), \; \forall \hat u = (\hat v, \hat w) \in \U, \; \forall u \in \reals^{n+1}.
    \)
    Since for any $\hat u\in\U$, $\lambda\geq0$, and $\psi\in\Psi$ the function $F_{\lambda,\psi}(\hat u,\cdot)$ is upper semicontinuous, its level set mapping is closed and measurable, and hence a normal integrand by Proposition 14.33 of \cite{rockwets}.

    Define $\mathcal{X}$ to be the set of measurable functions $x: \U \rightarrow \reals^{n+1}$ such that $ \mathbbm{E}\bigl[\,\abs{x_\W (\hat U)}\,\bigr] < \infty$.
    Then $\mathcal{X}$ is \emph{decomposable with respect to $\hat\mu$} (ib., Definition 14.59).
    To see why, let $x^0 \in \mathcal{X}$; $A \in \mathcal{F}$ with $\hat\mu(A) < \infty$; and $x^1: A \rightarrow \mathbbm{R}^{n+1}$ be bounded and measurable. Then the function $x^*(\hat u)$---defined as taking value $x^0(\hat u)$ when $\hat u \notin A$ and $x^1(\hat u)$ otherwise---%
    is clearly measurable; moreover,
    \(
    \mathbbm{E}_{\hat\mu}\bigl[\,|x^*_\W(\hat U)|\,\bigr] =
    \mathbbm{E}_{\hat\mu}\bigl[\,|x^0_\W(\hat U)|;\; \U \setminus A\,\bigr]
    +
    \mathbbm{E}_{\hat\mu}\bigl[\,|x^1_\W(\hat U)|;\; A\,\bigr]
    < \infty,
    \)
    since the first term is bounded by definition of $\mathcal{X}$ and the second is bounded by assumption, showing that $x^*\in\X$ as desired.
    Then for any $\lambda \geq 0$ and $\psi \in \Psi,$
    \begin{align*}
    \sup_{x \in \mathcal{X}} \mathbbm{E}_{\hat \mu}[
    F_{\lambda, \psi}(\hat U; x(U))
    ]
    &=
    \sup_{x \in \mathcal{X}} \mathbbm{E}_{\hat \mu}
    \bigl[
    f(x(\hat U)) - \psi(\hat V)\cdot ( x_\W (\hat U)-h(\hat V))  - \lambda c(\hat U; x(\hat U)) - \iota_{\U}(x(\hat U))
    \bigr]
    \\
    &\leq
    \sup_{\gamma \in \Gamma_{\hat{\mu}}{\cap\Gamma_\W}}\!\!\!
    \mathbbm{E}_\gamma
    \bigl[
    f(U) - \psi(\hat V)\cdot (W-h(\hat V))  - \lambda c(\hat U; U)
    \bigr]
    \equiv
    \dualPsiRef
    \\
    &
    \leq 
    \mathbbm{E}_{\hat\mu}\Bigl[\;
    \sup_{u\in\U} 
    f(u) - \psi(\hat V)\cdot (w-h(\hat V))  - \lambda c(\hat U; u)
    \;\Bigr]
    \equiv
    \dualPsiIPRef
    \\
    &
    =  \mathbbm{E}_{\hat\mu}\Bigl[\;
    \sup_{u \in \reals^{n+1}} 
    F_{\lambda, \psi}(\hat U; u)\Bigr].
    \end{align*}
    The first inequality holds because any measurable $x\in\X$ induces a coupling $\gamma\in\Gamma_{\hat\mu}\cap\Gamma_\W$.
    The second holds since for any $\gamma \in \Gamma_{\hat \mu} \cap \Gamma_\W$,  $\mathbbm{E}_\gamma\left[F_{\lambda,\psi}(\hat u,u)\right]\leq \mathbbm{E}_{\hat\mu}\left[\sup_{u\in\U}F_{\lambda,\psi}(\hat u,u)\right]$ (and $\sup_{u\in\U}F_{\lambda,\psi}(\cdot,u)$ being $\hat \mu$-measurable by Theorem 14.37 (ib.)).
    Finally by Theorem 14.60 (ib.), the first and final quantities are equal, so each inequality is tight; indeed, $F_{\lambda, \psi}$ is a normal integrand, and $\mathcal{X}$ is decomposable with respect to  $\hat \mu$.
\end{proof}

\section{A Reader's Guide to \cref{lem:duality_gap_unbounded_f} from \cref{sec:lacking_interior}}
\label{apx:example_detail}
Here we provide the reader fuller guidance on the duality gap exhibited in \cref{lem:duality_gap_unbounded_f}.

\unboundedfdualitygap*
\begin{proof}[Guide to \cref{lem:duality_gap_unbounded_f}]
    First note that \ref{assn:f_closed} and \ref{assn:c_closed} hold by construction.
    Indeed, since $\hat W\equiv1$ under $\hat\mu$,  $\mathbbm{E}_{\hat\mu}[f]=0 < +\infty$, and $f,c$ are continuous.
    Noting that $\mathbbm{M}_h(\hat\mu,\mu)\leq\rho$ is equivalent to $\mathbbm{M}_h(\hat\mu,\mu)=0$, we see that any  $\gamma$ feasible to \eqref{eq:M_h} must have $V=\hat V$, $\gamma$-a.s.
    In this case, we obtain $\eqref{eq:primal}=0$ from the calculation
    \[
    \mathbbm{E}_\gamma[f]=
    \mathbbm{E}_\gamma[\hat V\cdot(W-1)]
    =
    \mathbbm{E}_{\hat\nu}[ \mathbbm{E}[\hat V\cdot(W-1)\mid \hat V] ]
    =
    \mathbbm{E}_{\hat\nu}[ \hat V\cdot \mathbbm{E}[(W-1)\mid \hat V] ]
    =0.
    \]
    Meanwhile, $\dualPsiRef = +\infty.$ To see this, let $\psi\in L^\infty(\hat\nu)$,
    let $D\coloneqq\{\hat V > \|\psi \|_{L^\infty(\hat\nu)}+\epsilon\}$ for some $\epsilon>0$ so that $\hat\nu(D)>0$ and $\hat V-\psi(\hat V)>\epsilon>0$ on $D$.
    For $r\geq1$, define a coupling $(V^r, W^r, \hat V, \hat W)\sim \gamma_r\in\Gamma_{\hat\mu}$ with $V^r=\hat V$, $\gamma_r$-a.s.~and $W^r\coloneqq r\ind_D + \ind_{D^c}$ so that $\mathbbm{E}_{\gamma_r}[W^r] = r\cdot\hat\nu(D) < \infty$, and hence $\gamma_r\in\Gamma_{\hat\mu}\cap\Gamma_{\W}$ as well.
    Then for all $r\geq1$,
    \[
    \mathbbm{E}_{\gamma_r}[(\hat V - \psi(\hat V))\cdot (W^r-1)]
    =(r-1)\mathbbm{E}_{\hat\nu}[(\hat V-\psi(\hat V));D]
    \geq
    (r-1)\cdot \epsilon \cdot \hat\nu(D)
    ,
    \]
    yielding
    \[
    \sup_{\gamma\in\Gamma_{\hat\mu}\cap\Gamma_{\W}
    } \mathbbm{E}_\gamma[(\hat V - \psi(\hat V)) \cdot (W-1)]
    \geq
    \sup_{r\geq1} 
    (r-1)\cdot \epsilon \cdot \hat\nu(D)
    =
    +\infty.
    \]
    Considering $\psi \in L^\infty(\hat \nu)$ was arbitrary and $\rho - \mathbbm{E}[c(\hat V, \hat W; V^r, W^r)] = \rho > 0,$
    then
    $\dualPsiRef = +\infty$, as desired.
    \end{proof}

\section{Supplemental Examples on \cref{assn:**a,assn:**b}}
Here we collect examples clarifying the role and relation to other regularity conditions of \cref{assn:**a,assn:**b}.
\cref{apx:A1A2} focuses on the separate roles of the assumptions. In particular, \cref{ex:duality_gap_a-not-b} shows that \cref{assn:**b} is important: a duality gap may exist when \cref{assn:**a} holds yet \cref{assn:**b} fails, and then shows how \cref{assn:**b} eliminates the gap in that example.
Second, \cref{apx:nonexistence} compares \cref{assn:**a,assn:**b} with the traditional Slater's condition and any other condition that guarantees solution existence.
\cref{ex:not_slater} illustrates that our assumptions are distinct from Slater's condition, while
\cref{ex:nonexistence_primal,lem:dual-nonexist-template,ex:nonexistence_dual,ex:nonexistence_dual_Cb} illustrate that they are distinct from conditions that guarantee solution existence (primal or dual).

\subsection{Role of \cref{assn:**b}}
\label{apx:A1A2}
\begin{example}[Duality gap when \cref{assn:**b} fails]
    \label{ex:duality_gap_a-not-b}
    Let
    $\V\coloneqq\{0\}$ and
    $\W\coloneqq[0,1]$
    be endowed with standard topologies, $\mathcal F$ be the product of their respective Borel sigma-fields,
    $f(v,w)\coloneqq \ind_{\{w<1\}}(w)$,
    $h\coloneqq1$,
    $\rho\coloneqq 1 + \epsilon$ for any $\epsilon>0$,
    and $c(\hat u,u)\coloneqq (w-\hat w)^2$.
    Also let $\hat\nu\coloneqq\delta_0$, $\hat\mu\coloneqq \hat\nu\otimes\delta_{0}$, and $\Psi\coloneqq L^\infty(\hat\nu)=\R$.

    \cref{assn:**a} holds by considering the coupling $\gamma^0\in\Gamma_{\hat\mu}$ with $W\equiv1$, as $\mathbbm{E}_{\gamma^0}[W-1\mid\hat V]=0$, $\hat\nu$-a.s.~and $\mathbbm{E}_{\gamma^0}[c] = \mathbbm{E}[W^2]=1<\rho$.
    However, \cref{assn:**b} fails since for any coupling $\gamma\in\Gamma_{\hat\mu}$, $\W$ requires $0\leq W\leq 1$, $\gamma$-a.s.,~so $\mathbbm{E}_\gamma[W-1\mid\hat V]\leq0$ $\hat\nu$-a.s.
    Thus it is impossible to find $\gamma^+\in\Gamma_{\hat\mu}$ with $\mathbbm{E}_{\gamma^+}[W-1\mid\hat V]\geq b>0$.

    In this setting, $\eqref{eq:primal}=\eqref{eq:primal_explicit}$ by \cref{lem:v1=v2},
    and because any feasible coupling must have $W\equiv1$, we find that $\eqref{eq:primal} = 0$, yet \begin{align*}
        \dualPsiRef
        =
        \dualPsiIPRef
        =
        \inf_{\lambda\geq0,\psi\in\R}
        \lambda\rho + \sup_{w\in[0,1]} \ind_{\{w<1\}}(w) - \lambda w^2 - \psi\cdot(w-1)=1,
    \end{align*}
    since along any sequence $1\neq w\uparrow1$, for any $\lambda\geq0$ and $\psi\in\R$, it holds that $\ind_{\{w<1\}}(w)=1$, $\lambda (\rho-w^2)\to\lambda\epsilon\geq0$, and $\psi\cdot(w-1)\to0$.
\end{example}
While \cref{ex:duality_gap_a-not-b} uses the fact that $f$ is not upper semicontinuous, the failure of \cref{assn:**b} is also important.
One way to satisfy \cref{assn:**b} and close the gap would be to perturb the upper bound of $\W$ by a small amount.
Indeed, redefining $\W\coloneqq[0,1+\sqrt{\epsilon}]$ for $\epsilon\in(0,1]$,
it would then be possible to perturb the (formerly unique) feasible $W\equiv1$ ``up'' and ``down'' within the revised domain to $ W^+ \coloneqq W + \sqrt{\epsilon}$ and $W^- \coloneqq W - \sqrt{\epsilon}$.
Moreover, for a sequence $\delta_r\downarrow0$ with  $\delta_r\in(0,\sqrt{\epsilon}]$, we may then define a sequence of couplings $\{\gamma^r\}_{r\geq1}$ with $W_r$-marginal obtained by mixing $W^+$ and $W_r^-\coloneqq W - \delta_r$ independently of $W$ drawn according to $t_r\sim{\rm Bern}(q_r)$ where $q_r\coloneqq \delta_r/(\delta_r + \sqrt{\epsilon})\in(0,1)$, i.e., $W_r\coloneqq (1-t_r) W^-_r + t_r W^+$.
Then $\mathbbm{E}_{\gamma^r}[f]\to1$ as $r\to\infty$ and both $\mathbbm{E}_{\gamma^r}[W_r - 1\mid\hat V]=0$ and $\mathbbm{E}_{\gamma^r}[c]=1+\sqrt{\epsilon}\delta_r \leq 1+\epsilon=\rho$ for all $r$ when $\delta_r\in(0,\sqrt{\epsilon}]$, yielding $\eqref{eq:primal}=1$ to match $\dualPsiRef$.
This example motivates the form of \cref{assn:**b}.

\subsection{Relation of \cref{assn:**a,assn:**b} to Conditions Guaranteeing Solution Existence}
\label{apx:nonexistence}
First we provide an example demonstrating that \cref{assn:**a,assn:**b} are distinct from the traditional Slater's condition.
\begin{example}[\cref{assn:**a,assn:**b} are Distinct from Slater]
    \label{ex:not_slater}
    Let
    $\V\coloneqq\reals$ and
    $\W\coloneqq\R_+$ be endowed with standard topologies, $\mathcal F$ be the product of their respective Borel sigma-fields,
    $f$ be arbitrary,
    $h\equiv1$,
    $\rho>0$,
    $c\equiv0$,
    $(\hat V, \hat W) \sim \hat \mu$ be such that the marginal $\hat V \sim \hat \nu \coloneqq N(0,1).$
    Consider the perturbation function $p:L^1(\hat\nu)\to\Reals$
    \begin{align*}
        p(\theta)=
        \left\{\!\!\!
        \begin{array}{cl}
            \sup_{\gamma\in\Gamma_{\hat\mu}\cap\Gamma_\W} & \mathbbm{E}_\gamma[f] \\
            \st & \mathbbm{E}_\gamma[W\mid\hat V]= 1 + \theta(\hat V), \;\; \hat V \text{-a.e.}
        \end{array}
        \!\!\!\right\}.
    \end{align*}
    Define a collection of functions $\{\theta_r\}_{r=1}^\infty \subseteq L^1(\hat \nu)$. For each positive integer $r,$ let $\theta_r(\hat v)$ take value $-2$ if $\hat v\in[r,r+1]$ and zero otherwise.
    Then $\|\theta_r\|_{L^1(\hat \nu)} = 2\cdot \hat \nu ([r, r+1]) \downarrow 0$ as $r \rightarrow \infty,$ and yet for every integer $r,$ it is the case that for all $\gamma \in \Gamma_{\hat \mu}\cap \Gamma_{\W}$,
    \[
    \mathbbm{E}_\gamma[W\mid\hat V] - 1 \geq 0 -1 > -2 = \theta_r(\hat V)
    \]
    with measure $\hat \nu([r,r+1]) > 0$.
    In other words, the Slater condition \cref{prop:convex_duality}\ref{cond:slater} does not hold, i.e., $0 \notin \intr \dom p$.
    With some mild additional assumption on $f$, we further obtain $0 \notin \ri \dom p$.
    Indeed, $\dom p \subseteq \{\theta\in L^1(\hat\nu) : \theta \geq -1,\;\text{$\hat\nu$-a.s.}\} = -1+L_+^1(\hat\nu)$, so when $\dom p \supseteq -1+L_+^1(\hat\nu)$, then  $\aff \dom p = \aff L^1_+(\hat\nu) = L^1(\hat\nu)$, i.e., $\ri \dom p = \intr_{\aff \dom p} \dom p = \intr_{L^1(\hat\nu)}\dom p = \intr \dom p$.
    For example, given the coupling $\gamma_\theta$ with $W=1+\theta(\hat V)$, any condition on $f$ ensuring that $\mathbbm{E}_{\gamma_\theta}[f]>-\infty$ for all $\theta\in -1+L_+^1(\hat\nu)$ is sufficient for $\dom p \supseteq -1+L_+^1(\hat\nu)$.
    However, \cref{assn:**a,assn:**b} hold. 
\end{example}

Next we present an example showing that the existence of primal optimal solutions is not guaranteed when the compactness of $\U$ is relaxed and \cref{assn:**a,assn:**b} hold, even if $f$ and $c$ remain closed, i.e., $[\neg\ref{assn:U_compact}\land\ref{assn:f_closed}\land\ref{assn:c_closed}]$.
\begin{example}[Primal nonexistence]
    \label{ex:nonexistence_primal}
    Let
    $\V=\{0\}$ and 
    $\W\coloneqq\R$ be endowed with the standard (subspace) topologies,
    $\mathcal{F}$ be the product of their respective Borel sigma-fields,
    $f(v,w) \coloneqq -1/(1+\abs{w})$,
    $h\coloneqq1$,
    $\rho>0$,
    $c \coloneqq 0$,
    $\hat \mu \coloneqq \delta_{(0,0)}$,
    and $\Psi\coloneqq L^\infty(\hat\nu)=\mathcal{C}_b(\V)=\R$.
    Then $0=\eqref{eq:primal}=\dualPsiRef=\dualPsiIPRef$, but no optimal $\gamma$ exists.

    Clearly \cref{assn:**a,assn:**b} hold, so $\eqref{eq:primal}=\eqref{eq:primal_explicit}=\dualPsiRef$ by \cref{prop:main_result}.
    We find $\dualPsiRef=\dualPsiIPRef$ by noting that $f,c,\psi$ are continuous and applying
    \cref{prop:ip}.
    Finally it is clear that $f(0,w)<0$ for all $w\in\R$, so $\mathbbm{E}_\mu[f]<0$ for any coupling $\mu$ that is feasible to \eqref{eq:primal}, preventing primal attainment.

\end{example}

Next we examine the possibility that the dual problem \dualPsiIPRef fails to admit an optimal solution.
Towards this, we recall that:
\[
C_b(\V) \subseteq L^\infty(\hat\nu) \subseteq L^1(\hat\nu).
\]
Specifically, we will demonstrate that it is possible for the optimal value $(\dualLinftyIP)$ to be only approached in the limit by members of $L^\infty(\hat\nu)$ whilst being attained by a unique member of $L^1(\hat\nu)$. Similarly, the optimal value $(\dualCbIP)$ only approached in the limit by members of $C_b(\V)$ whilst being attained by a unique member of $L^\infty(\hat\nu)$.

The following result will provide a useful template for constructing instances in which no dual optimal solution exists and yet there is zero duality gap. 

\begin{example}[Template for dual nonexistence]
    \label{lem:dual-nonexist-template}
    Let $(\V,\mathcal{G})$ be a measurable space with probability measure $\hat\nu$ and let 
    $\W\coloneqq\{0,2\}$. Let $\hat\mu \coloneqq \hat\nu \otimes \delta_0 \in \mathcal{P}(\U)$. Let 
    $h\equiv1$,
    $\rho>0$,
    and $c$ take value $+\infty$ if $v\neq \hat v$ and zero otherwise.
    Finally, let
    \(
    f(v,w)\coloneqq g(v)\cdot\ind_{\{w=0\}}(w),
    \)
    for $g:\V\to(-\infty,0]$ be $\mathcal{G}$-measurable with $g\in L^1(\hat\nu)$.

    Then  $\eqref{eq:primal} = (\dualLinfty) = (\dualLinftyIP),$
    and in addition, $\psi^*(\hat v) \coloneqq -g(\hat v)/2$ is the $\hat\nu$-a.e.~unique solution to $(\dualLoneIP)$.
\end{example}
\begin{proof}[Guide to \cref{lem:dual-nonexist-template}]
In the following, let $\Psi = L^\infty(\hat\nu)$ and  $\bar\Gamma\coloneqq\Gamma_{\hat\mu}\cap\Gamma_\W\cap\{\gamma : \mathbbm{E}_\gamma[c]<+\infty\}$.
    By construction of $c\in\{0,+\infty\}$, it holds that
    \[
    \mathbbm{E}_\gamma[c]<\infty
    \Rightarrow
    V=\hat V,\;\gamma\text{-a.s.}
    \tag{$\dagger$}
    \]
    We now verify \cref{assn:**a,assn:**b}.
    Let $\gamma^0$ be the coupling defined by $\hat V\sim\hat\nu$, $\hat W\equiv 0$, $V=\hat V$ a.s., and
    $\gamma^0(W=0\mid\hat V)=\gamma^0(W=2\mid\hat V)=1/2$, $\hat\nu$-a.s.~so $\mathbbm{E}_{\gamma^0}[W-1\mid\hat V]=0$, $\hat\nu$-a.s.
    Then $\mathbbm{E}_{\gamma^0}[c]=0<\rho$ and $\mathbbm{E}_{\gamma^0}[f]=\mathbbm{E}_{\hat\nu}[g]/2>-\infty$ since $g\in L^1(\hat\nu)$.
    Thus \cref{assn:**a} holds, so $\eqref{eq:primal} = \eqref{eq:primal_explicit}$ reduces to a problem over couplings.
    Let $\gamma^\pm$ be couplings with $\hat V\sim\hat\nu$, $\hat W\equiv 0$, $V=\hat V$ a.s., and $W^-\equiv 0$ and $W^+\equiv2$, respectively.
    Then $\mathbbm{E}_{\gamma^\pm}[c]=0$ and
    \(
    \mathbbm{E}_{\gamma^\pm}[W^\pm-1\mid\hat V] \equiv \pm1,
    \)
    so \cref{assn:**b} holds. 
    
    If $\gamma$ is feasible for \eqref{eq:primal_explicit},
    then the conditional moment constraint $\mathbbm{E}_\gamma[W\mid\hat V]=1$ and $W\in\{0,2\}$ imply
    $\gamma(W=0\mid\hat V)=\gamma(W=2\mid\hat V)=1/2$ $\hat\nu$-a.s.
    Hence every feasible $\gamma$ has the same objective value, so
    \(
    \mathbbm{E}_{\gamma}[ g(\hat V)\cdot\ind_{\{W=0\}}]
    =\mathbbm{E}_{\hat\nu}[g]/2
    =\eqref{eq:primal}
    =\dualPsiRef,
    \)
    where the last equality follows from \cref{prop:main_result}.
    
    The value of the dual program \dualPsiRef can be expressed as with couplings restricted to $\bar\Gamma$ as
    \[
    \dualPsiRef
    =
    \inf_{\lambda>0,\psi\in\Psi}\!\! \lambda\rho + B(\psi)
    =
    \inf_{\psi\in\Psi} B(\psi)
    =\inf_{\lambda\geq0,\psi\in\Psi}\!\! \lambda\rho + B(\psi),
    \qquad
    B(\psi)\coloneqq \sup_{\gamma\in\bar\Gamma} \mathbbm{E}_\gamma[f - \psi(\hat V)\cdot(W-1)].
    \]
    Indeed, the first equality holds since for $\lambda>0$, any coupling with $V\neq\hat V$ on a set of positive mass has value of $-\infty$ in the inner supremuum, and $c=0$ $\gamma$-a.s.~for every $\gamma\in\bar\Gamma$ by $(\dagger)$; the second and third use $\rho\geq0$.
    Next we show that the IP holds.
    To begin, note that any $\gamma\in\bar\Gamma$ may be expressed as
    \(
    \gamma(d\hat v,d\hat w;dv,dw) = \hat\nu(d\hat v)\,\delta_0(d\hat w)\,\delta_{\hat v}(dv)\,\bigl( (1-t(\hat v))\,\delta_0(dw) + t(\hat v)\,\delta_2(dw)\bigr)
    \)
    for some $\mathcal G$-measurable $t:\V\to[0,1]$, i.e., $t(\hat v) = \gamma(W=2\mid \hat V=\hat v)$.
    Thus the inner supremum over couplings reduces to a supremum over $\mathcal G$-measurable $t$, and for fixed $\psi\in\Psi$, satisfies
    \begin{align*}
        \sup_{\gamma\in\bar\Gamma} \mathbbm{E}_\gamma[f(V,W)-\psi(\hat V)\!\cdot\!(W-1)]
        &=\!\!\!
        \sup_{t : \V\to[0,1]}
        \medint
        \bigl(
        (1-t(\hat v)) \!\cdot\![f(\hat v,0)+\psi(\hat v)]
        +
        t(\hat v) \!\cdot\! [f(\hat v,2)-\psi(\hat v)]
        \bigr)
        \,\hat\nu(d\hat v)
        \\
        &=
        \medint
        \max
        \{
        g(\hat v) + \psi(\hat v)
        ,
        -\psi(\hat v)
        \}
        \,\hat\nu(d\hat v).
    \end{align*}
    The last equality uses the pointwise identity $\sup_{t\in[0,1]} (1-t)a + tb = \max\{a,b\}$ is attained by $t^*(\hat v) = \ind_{\{-\psi(\hat v) \geq g(\hat v) + \psi(\hat v)\}}(\hat v)$, which is $\mathcal G$-measurable since it is defined by comparing $\mathcal G$-measurable functions.
    Thus the IP is justified, and
    \begin{align*}
    \dualPsiRef
    =
    \dualPsiIPRef
    &=
    \inf_{\psi\in \Psi}
    \mathbbm{E}_{\hat\nu}[\max\{g+\psi,\,-\psi\}]
    =
    \mathbbm{E}_{\hat\nu}[g]/2
    +
    \inf_{\psi\in \Psi}\mathbbm{E}_{\hat\nu}\left[\,\abs{\psi+{g}/{2}}\,\right] \geq \mathbbm{E}_{\hat\nu}[g]/2,
    \end{align*}
    where the last equality
    uses the pointwise identity
    \(
    \max\{g(\hat v)+\psi(\hat v),-\psi(\hat v)\}
    ={g(\hat v)}/{2}+\abs{\psi(\hat v)+{g(\hat v)}/{2}}.
    \)
    It follows that if $\Psi$ were in fact $L^1(\hat\nu)$, then the lower bound $\mathbbm{E}_{\hat\nu}[g]/2$ is made tight due to $\psi^* \coloneqq -g/2 \in \Psi$; in other words, $\psi^*$ solves $(\dualLoneIP)$ and is in fact clearly its $\hat\nu$-a.e.~unique solution. 
\end{proof}

 Using \cref{lem:dual-nonexist-template}, we can firstly attain an example in which solutions in $\Psi\coloneqq L^\infty(\hat\nu)$ can approximate the dual optimal value $(\dualLinftyIP)$ but no individual member can attain it. 
\begin{example}[Dual nonexistence in $L^\infty(\hat\nu)$]
    \label{ex:nonexistence_dual}
    In the setting of \cref{lem:dual-nonexist-template},
    let
    $\V\coloneqq(0,1]\subset\R$,
    $\hat\nu\coloneqq{\rm Unif}[0,1]$,
    and $g\in L^1(\hat\nu)\setminus L^\infty(\hat\nu)$ be given by $g(v)\coloneqq-1/\sqrt{v}$ so $f\leq0$.
    Then $\eqref{eq:primal}=(\dualLinftyIP)=\mathbbm{E}_{\hat\nu}[g/2]$ but no $\psi \in L^\infty(\hat\nu)$ solves $(\dualLinftyIP)$.
    
    By \cref{lem:dual-nonexist-template}, the $\hat\nu$-a.e.~unique minimizer is $\psi^*=-g/2 \in L^1(\hat\nu)\setminus L^\infty(\hat\nu)$. However, the optimal dual value $\mathbbm{E}_{\hat\nu}[g/2]$ can be approached using a sequence $\{\psi_r\}_{r=1}^\infty\in L^\infty(\hat\nu)$ in which $\psi_r\coloneqq-g_r/2$, where $g_r\coloneqq\max\{g,-r\}$. Indeed, $g_r\to g$ in $L^1(\hat\nu)$.
\end{example}

\cref{lem:dual-nonexist-template} can also be used to produce an instance in which there is no dual optimal solution in even the setting of Theorem 4.2 of \cite{jiajin2025unifying} with $\Psi\coloneqq\mathcal{C}_b(\V)$, even when $\U$ is compact, and $f$ is upper semicontinuous, and $c$ lower semicontinuous, i.e., $[\ref{assn:U_compact}\land\ref{assn:f_closed}\land\ref{assn:c_closed}]$ holds.
\begin{example}[Dual nonexistence in $\mathcal{C}_b(\V)$]
\label{ex:nonexistence_dual_Cb}
    In the setting of \cref{lem:dual-nonexist-template}, let
    $\V\coloneqq[0,1]$,
    $\hat\nu\coloneqq{\rm Unif}[0,1]$,
    and
    $\Psi\coloneqq L^\infty(\hat\nu)$.
    Let $C\subseteq\V$ be the (closed) Smith-Volterra-Cantor set, for which $\hat\nu(C)=1/2$, and $O\coloneqq \V\setminus C$ denote its complement, which is open in $\R$.
    Finally, set $g(v)\coloneqq-\ind_{O}(v)\in L^\infty(\hat\nu)\setminus \mathcal{C}_b(\V)$.
    Then $\eqref{eq:primal}=\dualPsiIPRef=\mathbbm{E}_{\hat\nu}[g/2]$.
    
    Also by \cref{lem:dual-nonexist-template}, the $\hat\nu$-a.e.~unique minimizer is $\psi^*=-g/2$, which is attained in $\Psi$.
    If we restrict the dual multiplier space to $\mathcal{C}_b(\V)$,
    then the optimal value is preserved, but there does not exist $\psi'\in\mathcal{C}_b(\V)$ achieving the optimal value.
    Indeed, for 
    ${\rm dist}(\cdot,C)$ denoting the distance to $C$,
    we may approximate ${g_r(v)\coloneqq -\min\{1,r\cdot{\rm dist}(v,C)\}\in\mathcal{C}_b(\V)}$ (since ${\rm dist}(\cdot,C)$ is continuous) and $\psi_r\coloneqq -g_r/2$ so that $g_r\downarrow g$ and $\psi_r\uparrow \psi^*$ with dual value approaching $\mathbbm{E}_{\hat\nu}[g/2]$ from above by monotone convergence theorem;
    however, if $\psi'\in\mathcal{C}_b(\V)$ attained the optimal value,
    then $\psi'=\psi^*=\tfrac12\ind_{O}$, $\hat\nu$-a.e.
    Since $O$ is open and $\hat\nu$ has full support, continuity implies that $\psi'=1/2$ on $O$; otherwise an open subset of $O$ would have positive $\hat\nu$-mass with $\psi'\neq1/2$.
    Since $O$ is dense, continuity gives $\psi'\equiv1/2$ on $[0,1]$, contradicting that $\psi'=0$ $\hat\nu$-a.e.~on $C$ where $\hat\nu(C)=1/2$, so $\psi^*$ is not attained in $\mathcal{C}_b(\V)$.

    Moreover,
    \(
    f(v,w)
    \coloneqq
    -\ind_{O}(v)\cdot \ind_{\{w=0\}}(w),
    \)
    is upper semicontinuous.
    Indeed, at $w=0$, $f(\cdot,0)=-\ind_O(\cdot)$ is upper semicontinuous since  $\ind_O(\cdot)$ is lower semicontinuous because $O$ is open, i.e., $\ind_O(\cdot) = 1-\ind_{C}$ where $\ind_C(\cdot)$ is upper semicontinuous.
    At $w=2$, $f(\cdot,2)=0$ is continuous.
    This verifies closedness of $f$ \ref{assn:f_closed}; moreover, compactness of $\U$ \ref{assn:U_compact} and closedness of $c$ \ref{assn:c_closed} both hold.
\end{example}

\section{A Reader's Guide to Unifying OT-, $\phi$-, and Sinkhorn-Duality via Perturbations for CM-OT Duality}
\label{apx:expressiveness}
In this supplemental section, we illustrate how OT-, $\phi$-, and Sinkhorn-DRO are all subclasses of the problem \eqref{eq:primal}, and we demonstrate derivations of their respective duality results from the literature via perturbations, commenting on the viability of \cref{assn:**a,assn:**b} along the way.

\subsection{OT-DRO}
\label{apx:OTDRO}
Let $(\V, \mathcal{G})$ be a measurable space with $\V$ convex and admitting a distance function $d:\V\times\V\to[0,\infty]$ such that  $d(v,v)=0$ for all $v\in \V$. Also let $\rho \geq 0$, measurable loss function $\ell:\V\to\R$, $\hat \nu \in \mathcal{P}(\V)$ be given,
and $\Pi(\hat\nu,\hat\nu)\coloneqq \{\pi \in\mathcal{P}(\V\times \V) : \pi(\hat G\times\V) = \hat\nu(\hat G),\; \pi(\V\times G) = \nu(G),\;\forall \hat G\in\mathcal{G}, \; \forall G \in \mathcal{G}\}$ denote the set of couplings of $\hat\nu,\nu \in \mathcal{P}(\V)$ and
$\Pi_{\hat{\nu}} \coloneqq \bigcup_{\nu \in\mathcal{P}(\V)} \Pi(\hat\nu,\nu)$ denote the set of couplings with $\hat\nu$ as first marginal.
Then the OT-DRO problem \citep{gao2022short} instance \eqref{eq:ot-dro} is
\begin{align}
\label{eq:ot-dro}
\begin{array}{cl}
    \displaystyle \sup_{\nu\in\mathcal{P}(\V)} & \mathbbm{E}_{V \sim \nu}[\ell(V)] \\
    \st &  \inf_{\pi\in\Pi(\hat\nu,\nu)}\mathbbm{E}_{(\hat V, V) \sim\pi}[d(\hat V, V)] \leq \rho.
\end{array}
\tag{$\rm {P}_{OT}$}
\end{align}

\subsubsection{\textbf{OT-DRO $\preceq$ CM-OT DRO}}
We show that \eqref{eq:ot-dro} can be reduced to a corresponding instance \eqref{eq:primal} of CM-OT with equivalent optimal value, i.e., \eqref{eq:primal} = \eqref{eq:ot-dro}. To construct \eqref{eq:primal}, we let  $\W\coloneqq[0,2]$, $(\hat V, \hat W \equiv 1) \sim \hat\mu\coloneqq \hat\nu\otimes\delta_1$,  $f(v,w)\coloneqq\ell(v)$, $c(\hat v,\hat w;v,w)\coloneqq d(\hat v,v)$, and $h\equiv1$. The equality will be established by showing that for every feasible $\nu$ to \eqref{eq:ot-dro} there is a feasible $\mu$ to \eqref{eq:primal} with equal objective value, and vice versa. 

If 
$\nu \in \mathcal{P}(\mathcal{V})$ and $(\hat V, V) \sim \pi\in\Gamma(\hat\nu,\nu)$, then it holds that upon appending deterministic entries of ``1", the resulting vector $(\hat V, \hat W \equiv 1, V, W \equiv 1)$ has probability law $\gamma \in \Gamma_{\hat \mu} \cap \Gamma_{\W}$ such that $\mathbb{E}_\gamma[c(\hat V, \hat W, V, W)] = \mathbb{E}_\pi[d(\hat V, V)]$ and we can let $\mu$ be the law of the subvector $(V,W)$ to (trivially) find $\mathbbm{E}_\nu[\ell(v)] = \mathbbm{E}_\mu[f(v,w)]$ and $\mathbbm{E}_\mu[W \mid  \hat V] = 1$. In summary, $\eqref{eq:ot-dro} \leq \eqref{eq:primal}$.

On the other hand, if $\mu \in \mathcal{P}(\U)$ and $(\hat V, \hat W, V, W) \sim \gamma \in \Pi(\hat \mu, \mu) \cap \Pi_{\W},$ then the subvector $(\hat V,V)$ has law $\pi \in\Pi_{\hat \nu}$ such that $\mathbb{E}_\pi[d(\hat V, V)] = \mathbb{E}_\gamma[c(\hat V, \hat W, V, W)]$ and we can let $\nu$ be the law of $V$ to again find $\mathbbm{E}_\nu[\ell(V)] = \mathbbm{E}_\mu[f(V,W)]$. In summary, $\eqref{eq:ot-dro} \geq \eqref{eq:primal}$.

\subsubsection{\textbf{On \cref{assn:**a,assn:**b} in OT-DRO}} 
Having shown above that there is a family of instances to CM-OT DRO that captures OT-DRO, we now remark briefly on the satisfaction of \cref{assn:**a,assn:**b} by this family. \cref{assn:**a} clearly holds for arbitrary $\rho > 0,$ since we may define $(\hat V, \hat W, V^0, W^0) \coloneq (\hat V, 1, \hat V, 1).$
\label{rmk:ot-assn2}
As for \cref{assn:**b}, we may define $(\hat V, \hat W, V^+, W^+) \coloneq (\hat V, 1.5, \hat V, 1.5)$ and $(\hat V, \hat W, V^-, W^-) \coloneq (\hat V, 0.5, \hat V, 0.5)$.
    
\subsubsection{\textbf{Duality in OT-DRO}}
In \cite{gao2022short}, $\eqref{eq:ot-dro}$ was shown to be equal to the 
dual(s)
\begin{align}
\label{eq:ot-dro-dual}
&\displaystyle\inf_{\lambda\geq0} \;\; \lambda \rho + \sup_{\pi\in\Gamma_{\hat\nu}}\mathbbm{E}_{(\hat V, V) \sim \pi}\Bigl[\; 
    \ell(V) - \lambda d(\hat V,V) \;\Bigr]\tag{$\text{D}_{\text{OT}}$}
    \\
    \label{eq:ot-dro-dual-IP}
    &= 
\displaystyle\inf_{\lambda\geq0} \;\; \lambda \rho +
    \mathbbm{E}_{\hat V \sim \hat\nu}\Bigl[\;
    \sup_{v\in\V} 
    \ell(v) - \lambda d(\hat V,v)
\;\Bigr]\tag{$\text{D}_{\text{OT}}+\text{IP}$}
\end{align}
under certain conditions; more precisely, $\eqref{eq:ot-dro}=\eqref{eq:ot-dro-dual}$  holds when $\rho > 0$ via a perturbation argument, and the equality $\eqref{eq:ot-dro-dual} = \eqref{eq:ot-dro-dual-IP}$ referred to as the \emph{interchangeability principle} was studied under the assumption that $\ell - \lambda d$ is \emph{diagonally dominant} for all $\lambda\geq0$.  

With the above reduction of \eqref{eq:ot-dro} to an instance \eqref{eq:primal}, we remark how these duality relations in OT-DRO can be achieved with the approach/results of this work applied to the reduced instance \eqref{eq:primal}. Indeed, noting that $\rho > 0$ in \eqref{eq:ot-dro} is equivalent to \cref{assn:**a} in the reduced instance \eqref{eq:primal}, we have that  $0\in\intr\dom p$ for the perturbation function 
\(
p(\tau)\coloneqq \sup_{\mu\in\mathcal{P}(\U)} \{\mathbbm{E}_\gamma[f] : \mathbbm{M}_h(\hat\mu,\mu)\leq\rho+\tau\}
\)
whenever $\rho > 0$.
In other words, $\eqref{eq:ot-dro} = \eqref{eq:ot-dro-dual}$ can be obtained as a direct result of \cref{prop:convex_duality}\ref{cond:slater} via
\[
\eqref{eq:primal} =
\inf_{\lambda\geq0} \lambda\rho\, +\, 
\sup_{ \substack{\gamma\in\Gamma_{\hat\mu}:\mathbbm{E}_\gamma[W\mid\hat V]=1} }
\mathbbm{E}_\gamma[f-\lambda c]
=
\inf_{\lambda\geq0} \lambda\rho\, +\,
\sup_{ \substack{\pi\in\Gamma_{\hat\nu}} }
\mathbbm{E}_\pi[\ell-\lambda d].
\]
As for the equality $\eqref{eq:ot-dro} = \eqref{eq:ot-dro-dual-IP}$, our \cref{thm:combined} 
provides a special case (ib. Example 2) among those covered in \cite{gao2022short}. Indeed, if $\rho > 0$, $\ell$ is bounded above and upper semicontinuous; $d$ is lower semicontinuous; and $(\V, \mathcal{G}, \hat\nu)$ is a probability space with $\V \subseteq \reals^n$ being convex and $\mathcal{G}$ the standard Borel $\sigma$-algebra; then \cref{thm:combined} provides equality of $\eqref{eq:primal}$ to $\eqref{eq:ot-dro-dual-IP}$ via
\begin{align*}
    \displaystyle\inf_{\substack{\lambda\geq0\\ \psi \in L^\infty(\hat\nu)}}
    \lambda \rho +
    \mathbbm{E}_{\hat V \sim \hat\nu}\Bigl[\;
    \sup_{v\in\V, w \in \W} 
    \ell(v) - \psi(\hat V) \cdot (w-1) - \lambda d(\hat V,v)
    \Bigr] = \displaystyle\inf_{\lambda\geq0} \lambda \rho +
    \mathbbm{E}_{ \hat\nu}\Bigl[\,
    \sup_{v\in\V} 
    \ell(v) - \lambda d(\hat V,v)
    \Bigr].
    \end{align*}

\subsection{$\phi$-DRO} \label{apx:PhiDRO}
Let $(\V, \mathcal{G})$ be a measurable space with $\V$ convex.
Let $\phi: [0, \infty)\to \Reals$ be a proper, convex function such that $\phi(1) = 0$ and $\phi(0) = \lim_{t\downarrow 0} \phi(t)$. As well, let $\hat\nu \in \mathcal{P}(\V)$, measurable loss function $\ell: \V \rightarrow \reals$, and $\rho \geq 0$ be given.
Then the $\phi$-DRO problem \citep{BenTalPhiDivergence} instance \eqref{eq:phi-dro} is
\begin{align}
\label{eq:phi-dro}
\begin{array}{cl}
    \displaystyle \sup_{\nu\in\mathcal{P}(\V)} & \mathbbm{E}_{V\sim\nu}[\ell(V)] \\
    \st & \mathbbm{D}_\phi(\hat\nu,\nu) \leq \rho,
\end{array}
\tag{$\rm {P}_{\phi}$}
\end{align}
where $\mathbbm{D}_\phi(\hat\nu,\nu)\coloneqq \mathbbm{E}_{\hat V \sim \hat \nu}\bigl[\phi\bigl(\frac{d\nu}{d\hat\nu}(\hat V)\bigr)\bigr]$  when $\nu\ll\hat\nu$ and $+\infty$ otherwise.

\subsubsection{\textbf{$\phi$-DRO $\preceq$ CM-OT DRO}}
We show that \eqref{eq:phi-dro} can be reduced to a corresponding instance \eqref{eq:primal} of CM-OT with equivalent optimal value, i.e., $\eqref{eq:primal} = \eqref{eq:phi-dro}.$ To construct $\eqref{eq:primal}$ we let $\W\coloneqq\R_+$, $\hat\mu\coloneqq\hat\nu\otimes\delta_1$, $H\equiv0$ (see \cref{rmk:Vhat_broader}), $h\equiv1$, $f(u)\coloneqq\ell(v)\cdot w$, and $c(\hat v, \hat w, v, w)\coloneqq \phi(w) + \iota_{\Delta}(\hat v,v)$ where $\Delta\coloneqq\{(\hat v,v) \in \V^2 : \hat v=v\}$. 
To confirm their equality, we separately verify the inequalities $\eqref{eq:primal} \geq \eqref{eq:phi-dro}$ and $\eqref{eq:primal} \leq \eqref{eq:phi-dro}$. In the following, we will let $(\hat V, \hat W \equiv 1) \sim \hat \mu$ denote a random vector with $\hat \mu$ as its law, and $\hat V\sim \hat \nu$ its marginal. 

To see $\eqref{eq:primal} \geq \eqref{eq:phi-dro}$, 
let $\nu$ be feasible to $\eqref{eq:phi-dro}$ and we will design a $\mu \in \mathcal{P}(\U)$ that is feasible for $\eqref{eq:primal}$ and presents objective value equal to that of $\nu$. Let $\mu \in \mathcal{P}(\U)$ be the 
pushforward of $\hat\nu$ under the map $\hat v \mapsto (\hat v, \frac{d\nu}{d\hat\nu}(\hat v))$ and define the random vector $(V = \hat V, W = \frac{d\nu}{d\hat\nu}(\hat V))\sim \mu$, where $\hat V \sim \hat \nu.$ Towards verifying the feasibility of $\mu$ for $\eqref{eq:primal},$ let $\gamma \in \Gamma(\hat \mu, \mu)$ be the pushforward of $\hat\nu$ under the map $\hat v \mapsto (\hat v, 1, \hat v, \frac{d\nu}{d\hat\nu}(\hat v)),$ which generates the coupled random vector $(\hat V, \hat W \equiv 1, V = \hat V, W = \frac{d\nu}{d\hat\nu}(\hat V)) \sim \gamma$.
Then the (conditional) moment constraint $\mathbbm{E}_{\gamma}[W \mid H(\hat V)] = \mathbbm{E}_{\gamma}[W] =  \mathbbm{E}_{\hat\nu}\bigl[\frac{d\nu}{d\hat \nu}(\hat V)\bigr] = \nu(\V)=1$ holds; further, the transport constraint does too, as $\mathbbm{E}_{\gamma}[c] = \mathbbm{E}_{\hat\nu}\bigl[\phi\bigl(\frac{d\nu}{d\hat\nu}(\hat V)\bigr)\bigr] = \mathbbm{D}_\phi(\hat\nu,\nu)\leq\rho.$
Finally, we verify the equality of objective values 
\[
\mathbbm{E}_{\mu}[f(V,W)] = \mathbbm{E}_{\hat \nu}\bigl[\ell(\hat V)\cdot \tfrac{d\nu}{d\hat\nu}(\hat V)\bigr] = \mathbbm{E}_\nu[\ell],
\]
which completes the confirmation that $\eqref{eq:primal} \geq \eqref{eq:phi-dro}$. 

To see $\eqref{eq:primal} \leq \eqref{eq:phi-dro}$, let $\mu$ be feasible to \eqref{eq:primal} with defined random vector $(V,W) \sim \mu$, and its marginal denoted $\projection_\V\mu \in \mathcal{P}(\V)$ such that $V \sim \projection_\V\mu$. Then we will design a $\nu \in \mathcal{P}(\V)$, confirm that it is feasible for $\eqref{eq:phi-dro}$, and show that it presents equal objective value. We define this $\nu$ via $\nu(G) \coloneqq \mathbbm{E}_{\mu}\left[W; V \in G\right]$, for all $G \in \mathcal{G}$ and begin by confirming its feasibility for \eqref{eq:phi-dro}. It will be helpful to note the following consequences of the feasibility of $\mu$ for \eqref{eq:primal}:  
(1) for any $\epsilon > 0,$ there exists a $\gamma^\epsilon \in \Gamma(\hat\mu, \mu) \cap \Gamma_{\W}$ for which we'll write $(\hat V, \hat W, V^\epsilon, W^\epsilon) \sim \gamma^\epsilon$ such that
$(\hat V, \hat W) \sim \hat \mu$ and $(V^\epsilon, W^\epsilon) \sim \mu$, satisfying
\[
\mathbbm{E}_{\gamma^\epsilon}[c(\hat V, \hat W, V^\epsilon, W^\epsilon)]\leq\rho+\epsilon,\quad \text{and}\quad
\mathbbm{E}_\mu[W^\epsilon ]= \mathbbm{E}_\gamma[W^\epsilon \mid H(\hat V)] = h(\hat V) \equiv 1;
\]
and (2) letting $\epsilon > 0$ be arbitrary,
the boundedness of the expectation 
$\mathbbm{E}_{\gamma^\epsilon}[c]$ means $V^\epsilon = \hat V$ almost surely, which combined with the fact that $V$ and $V^\epsilon$ are equal in distribution, means $V$ and $\hat V$ are equal in distribution, i.e., $V \sim \hat\nu$.
In summary, $\projection_\V \mu = \hat \nu.$

Armed with the above consequences, we proceed to check the feasibility of $\nu$ for \eqref{eq:sinkhorn-dro}. First, $\nu$ is a probability measure because $\W = \mathbb{R}_+$ confirms its nonnegativity and $\nu(\V)= \mathbbm{E}_\mu[W]= \mathbbm{E}_\mu[W^\epsilon ] = 1$, by construction.
Second, to verify $\nu \ll \hat\nu$,
we note that it is by construction of $\nu$ that $\nu \ll \projection_\V \mu$—in fact, $\mathbbm{E}_{\mu}\left[W \mid V\right] = \frac{d\nu}{d\projection_\V \mu}(V) $, $\projection_\V \mu$-almost surely—so the aforementioned consequence $\projection_\V \mu = \hat \nu$ confirms $\nu \ll \hat\nu$.
Third, to confirm $\mathbbm{D}_\phi(\hat\nu,\nu) \leq \rho$, we note that for arbitrary $\epsilon > 0$,
\[
\mathbbm{D}_\phi(\hat\nu,\nu) = 
\mathbbm{E}_{\projection_\V\mu}\!\bigl[\phi\bigl(\tfrac{d\nu}{d\projection_\V \mu}(V) \bigr)\bigr] = \mathbbm{E}_{\mu}\!\left[\phi(\mathbbm{E}_\mu[W \mid V])\right] \leq \mathbbm{E}_{\mu}[\phi(W)] = \mathbbm{E}_{\gamma^\epsilon}[\phi(W^\epsilon)] = \mathbbm{E}_{\gamma^\epsilon}[c]\leq\rho+\epsilon.
\]
With $\nu$ confirmed feasible to $\eqref{eq:phi-dro}$, we finally confirm that it presents an objective equal to that of $\mu$: 
\[
\mathbbm{E}_\nu[\ell] 
= 
\mathbbm{E}_{\projection_{\V}\mu}\bigl[\ell(V) \cdot \tfrac{d\nu}{d\projection_\V\mu}(V)\bigr] 
=
\mathbbm{E}_{\projection_\V\mu}[\ell(V) \cdot \,\mathbbm{E}_\mu[W \mid V]
] =
\mathbbm{E}_{\mu}[\ell(V) \cdot \,W]
\equiv
\mathbbm{E}_\mu[f(V,W)].
\]
In summary, $\eqref{eq:primal} \leq \eqref{eq:phi-dro}$.

\subsubsection{\textbf{On \cref{assn:**a,assn:**b} in $\phi$-DRO}}
\label{rmk:phi-assn2}
Having shown above that there is a family of instances to CM-OT DRO that captures $\phi$-DRO, we now remark briefly on the satisfaction of \cref{assn:**a,assn:**b} by this family. 

We can make the mild assumption that: 
\begin{align}
\label{eq:phi-dro-aux}
\rho > 0; \quad\quad 1 \in \intr\dom \phi; \quad\quad \mathbbm{E}_{\hat \nu}[\ell(\hat V)] > - \infty. \tag{\ref{eq:phi-dro} Auxiliary Assumption}
\end{align}
\cref{assn:**a} would then hold 
because we can let $\gamma^0$ be the pushforward of $\hat \nu$ under the map $\hat v \mapsto (\hat v, 1, \hat v, 1)$, which generates the random vector $(\hat V, 1, V^0 \equiv \hat V, W^0 \equiv 1) \sim \gamma^0$.  
\cref{assn:**b} 
would subsequently hold by \cref{lem:**a_implies_**b} since there would exist a $\beta \in (0, 1)$ such that: $h(\hat V) \pm \beta = 1\pm \beta \in \W = \R_+$; $\mathbbm{E}_{\gamma^0}[c(\hat V, \hat W, V^0, h(\hat V) \pm \beta)] = \mathbbm{E}_{\gamma^0}[\phi(1\pm\beta) + \iota_{\{0\}}(V^0-\hat V)] = \phi(1\pm \beta)< +\infty$; and $\mathbbm{E}_{\gamma^0}[f(V^0, h(\hat V) \pm \beta)] = \mathbbm{E}_{\hat\nu} [\ell(\hat V)]> -\infty$.
  
\subsubsection{\textbf{Duality in $\phi$-DRO}} 
In \cite{bayraksan_love_2015}, $\eqref{eq:phi-dro}$ is shown to admit the dual problem
\begin{align}
\label{eq:phi-dro-dual}
\displaystyle\inf_{\lambda\geq0, \mu \in \R} \;\; \lambda \rho + \mu + \mathbbm{E}_{\hat V \sim \hat\nu}\bigl[\;
    (\phi^* \lambda)\bigl(\ell(\hat V) - \mu\bigr)
    \;\bigr]\tag{$\text{D}_\phi+{\rm IP}$},
\end{align}
where for any $s\in \R$, $(\phi^*0)(s) \coloneqq \iota_0(s)$; however, if $\hat\nu$ is finitely supported, we may alternatively represent  $(\phi^*0)(s)$ with $(\phi^*0^+)(s)\coloneqq\lim_{\lambda \downarrow 0}(\phi^* \lambda)(s)$ as would be obtained through closure.

With the above reduction of \eqref{eq:phi-dro} to an instance \eqref{eq:primal}, we remark how this duality relation in $\phi$-DRO can be achieved with the approach/results of this work applied to the reduced instance \eqref{eq:primal}.
Indeed, let us assume \eqref{eq:phi-dro-aux}. Then, upon noting that in the above construction of $\eqref{eq:primal}$ that $H\equiv 0$ and $h \equiv 1$, the conditional moment constraint amounts to an expectation constraint $\mathbbm{E}_\gamma\left[W\right] = 1$ so that 
for the perturbation function $p(\tau, \theta)$ defined over $\R \times \R$ via 
\begin{align*}
p(\tau, \theta) &\coloneq \sup_{\gamma \in \mathcal{M}(\mathcal{U} \times \mathcal{U})}\mathbbm{E}_\gamma[f(V,W)]
    - \iota_{(-\infty,\tau]}\bigl(
    \mathbbm{E}_{\gamma}[c(\hat V,\hat W;V,W)]-\rho
    \bigr)
    - \iota_{\theta}(\mathbbm{E}_\gamma \left[W\right] - 1) - \iota_{\Gamma_{\hat\mu}\cap\Gamma_{\W}}(\gamma)\\
    &= \sup_{\gamma \in \mathcal{M}(\mathcal{U} \times \mathcal{U})}\mathbbm{E}_\gamma[\ell(V)\cdot W]
    - \iota_{(-\infty,\tau]}\bigl(
    \mathbbm{E}_{\gamma}[\phi(W) + \iota_{\Delta}(\hat V,V)]-\rho
    \bigr)
    - \iota_{\theta}(\mathbbm{E}_\gamma \left[W\right] - 1) - \iota_{\Gamma_{\hat\mu}\cap\Gamma_{\W}}(\gamma),
\end{align*}
we find $(0,0) \in \intr \dom p$.
Consequently, \cref{prop:convex_duality}\ref{cond:slater} reveals
\begin{align*}
\eqref{eq:phi-dro} &= \inf_{\lambda \geq0, \psi \in \R}\lambda\rho + \sup_{(\hat V, W)}[\ell(\hat V)\cdot W - \psi\cdot (W-1) - \lambda\phi(W)]\\
&\leq
\inf_{\lambda \geq0, \psi \in \R} \lambda\rho + \psi+
\mathbbm{E}_{\hat\nu}[\sup_{w\geq0} (\ell(\hat V) - \psi)\cdot w - \lambda\phi(w)]\\
&= \inf_{\lambda \geq0, \psi \in \R} \lambda\rho + \psi+
\mathbbm{E}_{\hat\nu}[(\phi^*\lambda)(\ell(\hat V) - \psi)s] = \eqref{eq:phi-dro-dual}.
\end{align*}
Finally, under the assumption that $\V \subseteq \mathbb{R}^n$ and equipped with $\mathcal{G}$ the standard Borel $\sigma$-algebra, and $\ell$ is upper semicontinuous, the above inequality can be made an equality---and hence complete the equality $\eqref{eq:phi-dro} = \eqref{eq:phi-dro-dual}$---by invoking \cref{prop:ip}.

\subsection{Sinkhorn-DRO}
Let $(\X,\mathcal{S})$
be a measurable space, with $\X$
convex, and let there be given reference measures $\hat\nu_1 \in \mathcal{P}(\mathcal{X}), \; \hat\nu_2\in\;{\Mbar_+(\X)}$, the nonnegative cone of the space of (not necessarily finite) measures, with their product, written $\hat\nu \coloneqq \hat\nu_1 \otimes \hat\nu_2,$ to be a measure over the product space $(\X \times \X, \mathcal{S} \times \mathcal{S})$.
Let $\phi: [0, \infty)\to \Reals$ be defined via $\phi(t)\coloneqq t\log(t)$ for $t> 0,$ and $\phi(0) \coloneqq \lim_{t\downarrow 0} \phi(t) = 0$.
Finally, let there be a measurable loss function $\ell: \X \rightarrow \reals\cup \{\infty\},$ parameters $\rho \geq 0, \alpha > 0$, and distance metric $d:\X\times\X\to[0,\infty]$.
Then the Sinkhorn-DRO problem \citep{wang2025sinkhorn} instance is
\begin{align}
\label{eq:sinkhorn-dro}
\begin{array}{cl}
    \displaystyle \sup_{\nu_2\in\mathcal{P}(\X)} & \mathbbm{E}_{\nu_2}[\ell] \\
    \st & \mathbbm{S}_\alpha(\hat\nu_1,\nu_2) \leq \rho,
\end{array}
\tag{$\rm {P}_{S}$}
\end{align}
which utilizes the Sinkhorn discrepancy 
\[\mathbbm{S}_\alpha(\hat\nu_1,\nu_2)\coloneqq \inf_{\nu \in N(\hat\nu_1,\nu_2)} \mathbbm{E}_{(V_1, V_2) \sim \nu}[d(V_1, V_2)] + \alpha \tilde{\mathbbm{D}}_\phi(\hat\nu,\nu)
\]
that incorporates the extended convex KL-divergence function $\tilde{\mathbbm{D}}_\phi(\hat\nu, \cdot)$ 
\[
\tilde{\mathbbm{D}}_\phi(\hat\nu, \nu) \coloneqq \iota_{\mathcal{P}(\X \times \Z)}(\nu) + \left(\mathbbm{D}_{\phi}(\hat\nu, \nu) \coloneqq
\begin{cases}
 \mathbbm{E}_{\hat V \sim \hat \nu}\bigl[\phi\bigl(\frac{d\nu}{d\hat\nu}(\hat V)\bigr)\bigr], & \nu\ll\hat\nu; \\
+\infty, & \text{otherwise};
\end{cases}\right)
\]
and the shorthand
$N(\nu_1,\nu_2) \coloneqq \{\nu \in\mathcal{P}(\X\times \X) : \nu(S\times\X) = \nu_1(S),\; \nu(\X\times O) = \nu_2(O),\;\forall S\in\mathcal{S}, \; \forall O \in \mathcal{S}\}$ for denoting the set of couplings of $\nu_1 \in \mathcal{P}(\X), \; \nu_2 \in\mathcal{P}(\X)$. In the sequel,
$N_{\hat{\nu}} \coloneqq \bigcup_{\nu_2 \in\mathcal{P}(\X)} N(\hat\nu,\nu_2)$ will denote the set of couplings with $\hat\nu$ as first marginal.

We proceed under the following standard assumptions for \eqref{eq:sinkhorn-dro} provided in \cite{wang2025sinkhorn}:
\begin{align}
    \label{eq:sinkhorn-dro-standard-1}
        \bullet\;\; & \text{$d(\hat V_1, \hat V_2) \in [0,\infty)$ for $\hat\nu$-a.e.~$(\hat V_1, \hat V_2)$;}
        \tag{\ref{eq:sinkhorn-dro}-1}
        \\
        \label{eq:sinkhorn-dro-standard-2}
        \bullet\;\; & \text{$Z(\hat v_1)\coloneqq\mathbbm{E}_{\hat V_2\sim\hat\nu_2}[\exp(-d(\hat v_1,\hat V_2)/\alpha)] < +\infty$ for $\hat\nu_1$-a.e.~$\hat v_1$};
        \tag{\ref{eq:sinkhorn-dro}-2}
        \\
        \label{eq:sinkhorn-dro-standard-3}
        \bullet\;\; & \text{$(\X,\mathcal{S},\hat\nu_1)$ has the Regular Conditional Probability property \citep{faden1985rcp}}.
        \tag{\ref{eq:sinkhorn-dro}-3}
\end{align}
We remark that (\ref{eq:sinkhorn-dro-standard-1}) and (\ref{eq:sinkhorn-dro-standard-2}) together ensure that $Z(\hat v_1) \in (0, +\infty)$ for $\hat \nu_1$-a.e. $\hat v_1,$ which plays a role in characterizing \eqref{eq:sinkhorn-dro}'s feasibility. Indeed, we recall \citep{agrawal2021optimal} that for any measurable function $g: \Z \rightarrow \reals$ such that $\mathbbm{E}_{\hat\nu_2}\left[\exp(g)\right] < +\infty$, it holds that
\begin{align*}
\tilde{\mathbbm D}_\phi(\hat\nu_2, \cdot)^*\left(g\right)
= \log\left(\mathbbm{E}_{\hat\nu_2}\left[\exp(g)\right]\right),
\end{align*}
with $\sup_{\nu_2} \mathbbm{E}_{\nu_2}\left[g\right] - \tilde{\mathbbm D}_{\phi}(\hat\nu_2, \nu_2) = \log\left(\mathbbm{E}_{\hat\nu_2}\left[\exp(g)\right]\right)$
solved uniquely by $\bar{\nu}_2$ satisfying
\[
\frac{d\bar{\nu}_2}{d\hat\nu_2} = \exp(g) \cdot \frac{1}{\mathbbm{E}_{\hat\nu_2}\left[\exp(g)\right]},
\]
which means
\begin{align*}
&\inf_{\nu_2 \in \mathcal{P}(\X)}\mathbbm{S}_\alpha(\hat\nu_1,\nu_2) = \inf_{\nu_2} \inf_{\nu \in N(\hat\nu_1,\nu_2)} \mathbbm{E}_{(V_1, V_2) \sim \nu}[d(V_1, V_2)] + \alpha \tilde{\mathbbm{D}}_\phi(\hat\nu,\nu)\\
&= \inf_{\{\nu_{v_1}\}_{v_1 \in \mathcal{X}} \subseteq \mathcal{P}(\mathcal{X})} \mathbbm{E}_{\hat\nu_1}\left[\mathbbm{E}_{V_2 \sim \nu_{\hat V_1}}\bigl[d(\hat V_1,V_2)\bigr] + \alpha \mathbbm{E}_{\hat\nu_2}\left[\phi\left(\frac{d\nu_{\hat V_1}}{d\hat\nu_2}\right)\right] \right]\\
&= - \alpha \mathbbm{E}_{\hat\nu_1}\left[\sup _{\nu_{\hat V_1} \in \mathcal{P}(\mathcal{X})} \mathbbm{E}_{\nu_{\hat V_1}}\left[-\frac{d(\hat V_1, V_2)}{\alpha}\right] - \mathbbm{E}_{\hat\nu_2}\left[\phi\left(\frac{d\nu_{\hat V_1}}{d\hat\nu_2}\right)\right]\right]\\
&= -\alpha \mathbbm{E}_{\hat\nu_1}\left[\tilde{\mathbbm D}_\phi(\hat\nu_2, \cdot)^*\left(-\frac{d(\hat V_1, \cdot)}{\alpha}\right) \right] = -\alpha \mathbbm{E}_{\hat\nu_1}\left[\log \mathbbm{E}_{\hat\nu_2} \left[\exp\left(-\frac{d(\hat V_1, \hat V_2)}{\alpha}\right)\right] \right]\\
&= -\alpha \mathbbm{E}_{\hat\nu_1}\bigl[\log Z(\hat V_1) \bigr].
\end{align*}
Under assumptions \eqref{eq:sinkhorn-dro-standard-1} and \eqref{eq:sinkhorn-dro-standard-2},
the infimum is attained uniquely by
\begin{align}
\label{eq:nu0_2}
\nu_2^0(\cdot)
\coloneqq
\mathbbm{E}_{\hat\nu_1}
\bigl[
\mathbb Q_{\hat V_1}(\cdot)
\bigr],
\tag{$\dagger$}
\end{align}
where \(v_1\mapsto \mathbb Q_{v_1}\) is the Gibbs kernel on \(\mathcal X\) defined, for \(\hat\nu_1\)-a.e.~\(v_1\), by
\[
\frac{d\mathbb Q_{v_1}}{d\hat\nu_2}(v_2)
=
\frac{\exp\{-d(v_1,v_2)/\alpha\}}{Z(v_1)};
\tag{Gibbs kernel}
\]
equivalently, for each such \(v_1 \in \X\), \(\mathbb Q_{v_1}\) is the conditional Gibbs measure induced by \(d(v_1,\cdot)\), \(\alpha\), and \(\hat\nu_2\).
The infimum characterizes the smallest permissible $\rho$ in order for \eqref{eq:sinkhorn-dro} to be feasible, and the following observation summarizes this discussion.

\begin{observation}[\eqref{eq:sinkhorn-dro} feasibility] \label{lem:sinkhorn-dro feasibility}
Define $\underline{r} \coloneqq -\alpha \mathbbm{E}_{\hat\nu_1}\bigl[\log Z(\hat V_1) \bigr].$ Then
    $\eqref{eq:sinkhorn-dro}$ is feasible if and only if $\underline{r} \leq \rho.$ Further, $\mathbbm{S}_\alpha(\hat\nu_1,\nu_2^0) = \underline{r}$, so that $\nu_2^0$ is feasible whenever $\eqref{eq:sinkhorn-dro}$ is feasible.
\end{observation}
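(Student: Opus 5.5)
The plan is to read the observation off the chain of equalities displayed just before it, and to add the one ingredient that chain does not state: the infimum is attained at the explicit coupling built from the Gibbs kernel. Once attainment is in hand, both claims follow from the order relation between $\underline r$ and $\rho$.

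\textbf{Step 1 (reduction to conditional laws).} First I would justify the second equality in the displayed chain. Take $\nu\in N(\hat\nu_1,\nu_2)$ with $\tilde{\mathbbm D}_\phi(\hat\nu,\nu)<+\infty$, where $\hat\nu=\hat\nu_1\otimes\hat\nu_2$. By \eqref{eq:sinkhorn-dro-standard-3}, $\nu$ disintegrates as $\nu(dv_1,dv_2)=\hat\nu_1(dv_1)\,\nu_{v_1}(dv_2)$. Since the first marginal of $\nu$ equals that of $\hat\nu$, the chain rule for relative entropy gives
\[
\mathbbm D_\phi(\hat\nu,\nu)=\mathbbm E_{\hat\nu_1}\bigl[\mathbbm D_\phi(\hat\nu_2,\nu_{\hat V_1})\bigr].
\]
The density satisfies $\frac{d\nu}{d\hat\nu}(v_1,v_2)=\frac{d\nu_{v_1}}{d\hat\nu_2}(v_2)$, so the identity follows from Fubini applied to $\phi\ge -1/e$. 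Conversely, any measurable kernel $\{\nu_{v_1}\}$ defines such a coupling, with $\nu_2$ taken to be its second marginal. Hence $\inf_{\nu_2}\mathbbm S_\alpha(\hat\nu_1,\nu_2)$ equals the infimum over kernels of $\mathbbm E_{\hat\nu_1}[\Phi_{\hat V_1}(\nu_{\hat V_1})]$, where
\[
\Phi_{v_1}(\eta)\coloneqq\mathbbm E_\eta[d(v_1,\cdot)]+\alpha\,\mathbbm D_\phi(\hat\nu_2,\eta).
\]

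\textbf{Step 2 (pointwise Gibbs principle with attainment).} Fix $v_1$ with $Z(v_1)\in(0,\infty)$, which holds $\hat\nu_1$-a.e.\ by \eqref{eq:sinkhorn-dro-standard-1}--\eqref{eq:sinkhorn-dro-standard-2}. For any $\eta\ll\hat\nu_2$ with $\Phi_{v_1}(\eta)<\infty$, I would use the identity
\[
\Phi_{v_1}(\eta)=\alpha\,\mathbbm D_\phi(\mathbb Q_{v_1},\eta)-\alpha\log Z(v_1).
\]
It comes from writing $\log\frac{d\eta}{d\hat\nu_2}=\log\frac{d\eta}{d\mathbb Q_{v_1}}-d(v_1,\cdot)/\alpha-\log Z(v_1)$ and integrating against $\eta$. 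Since $\mathbbm D_\phi\ge0$ with equality iff $\eta=\mathbb Q_{v_1}$, we get $\inf\Phi_{v_1}=-\alpha\log Z(v_1)$, attained uniquely at $\mathbb Q_{v_1}$. This is exactly the conjugacy formula of \cite{agrawal2021optimal} quoted above, applied with $g=-d(v_1,\cdot)/\alpha$, but now including the attainment.

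\textbf{Step 3 (conclusion).} Let $\nu^0(dv_1,dv_2)\coloneqq\hat\nu_1(dv_1)\,\mathbb Q_{v_1}(dv_2)$. Its second marginal is $\nu_2^0$ from \eqref{eq:nu0_2}, so $\nu^0\in N(\hat\nu_1,\nu_2^0)$. Combining Steps 1 and 2,
\[
\underline r\le\inf_{\nu_2}\mathbbm S_\alpha(\hat\nu_1,\nu_2)\le\mathbbm S_\alpha(\hat\nu_1,\nu_2^0)\le\mathbbm E_{\hat\nu_1}\bigl[\Phi_{\hat V_1}(\mathbb Q_{\hat V_1})\bigr]=\underline r,
\]
so $\mathbbm S_\alpha(\hat\nu_1,\nu_2^0)=\underline r$. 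The equivalence then follows directly. If $\underline r\le\rho$, then $\nu_2^0$ is feasible for \eqref{eq:sinkhorn-dro}. Conversely, any feasible $\nu_2$ satisfies $\underline r\le\mathbbm S_\alpha(\hat\nu_1,\nu_2)\le\rho$.

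\textbf{Main obstacle.} The hard part is the measure-theoretic bookkeeping in Steps 1--2, not the variational identity. Three things need care.
\begin{itemize}
\item Measurability: I would show $v_1\mapsto Z(v_1)$ and $v_1\mapsto\mathbb Q_{v_1}(B)$ are measurable by Tonelli. This makes $\nu^0$ a well-defined probability measure.
\item Well-definedness of $\underline r$: the expectation $\mathbbm E_{\hat\nu_1}[\log Z]$ could be $\pm\infty$ or undefined. I would treat these cases separately. If $\mathbbm E_{\hat\nu_1}[(\log Z)^-]=+\infty$, then $\underline r=+\infty$ and every coupling has infinite cost, so there is no feasible point and the statement holds trivially. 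Otherwise the pointwise identity integrates without $\infty-\infty$ issues.
\item Splitting integrals: the identity in Step 2 must be justified when $\mathbbm E_\eta[d]$ or the entropy is individually infinite. Here I would use that $d\ge0$ and $\phi\ge-1/e$, so integrals of their negative parts are finite and the integrals can be split.
\end{itemize}
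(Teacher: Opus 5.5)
Your proposal is correct and follows essentially the paper's route: disintegrate couplings over $\hat\nu_1$, apply the Gibbs/KL-conjugacy principle pointwise with unique minimizer $\mathbb Q_{v_1}$, and read off that $\underline r$ is the minimal Sinkhorn cost, attained at $\nu_2^0$. The paper quotes that principle as a known conjugacy formula, whereas you derive it from $\Phi_{v_1}(\eta)=\alpha\,\mathbbm D_\phi(\mathbb Q_{v_1},\eta)-\alpha\log Z(v_1)$.

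One caution on your bookkeeping: the paper allows $\hat\nu_2$ to have infinite mass, and then $\phi\ge -1/e$ does not make negative parts integrable, neither in Step~1's Fubini argument nor in your splitting remark. Instead, split $g\log g=g\log(g/q)+g\log q$ with $g=d\eta/d\hat\nu_2$ and $q=d\mathbb Q_{v_1}/d\hat\nu_2$, which is legitimate whenever $\mathbbm E_\eta[d]<\infty$. The leftover $\infty-\infty$ case can occur even at $\eta=\mathbb Q_{v_1}$, so it needs a convention, which the paper also leaves implicit.
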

As for the role of (\ref{eq:sinkhorn-dro-standard-3}), it offers a convenient (for the purposes of perturbation) reformulation of $\eqref{eq:sinkhorn-dro}$. The following observation presents this reformulation of \eqref{eq:sinkhorn-dro}, just as \cref{lem:v1=v2} did for \eqref{eq:primal}.
\begin{observation}
\label{lem:sinkhorn-droPrime}
Define the problem 
\begin{align}
\label{eq:sinkhorn-droPrime}
\begin{array}{cl}
    \displaystyle \sup_{\nu \in N_{\hat\nu_1}} & \mathbbm{E}_{\hat\nu_1}\bigl[ \mathbbm{E}_{V_2 \sim \nu_{V_1}}[\ell(V_2)]\bigr] \\
    \st & \mathbbm{E}_{\hat\nu_1}\left[\mathbbm{E}_{V_2 \sim \nu_{V_1}}\left[d(V_1,V_2)\right] + \alpha \mathbbm{E}_{\hat\nu_2}\left[\phi\left(\frac{d\nu_{V_1}}{d\hat\nu_2}\right)\right] \right] \leq \rho.
\end{array}
\tag{$\rm {P}_{S}'$}
\end{align}
Then $\eqref{eq:sinkhorn-dro} = \eqref{eq:sinkhorn-droPrime}$ for all $\rho$.
\end{observation}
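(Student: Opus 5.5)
The plan is to establish the two inequalities $\eqref{eq:sinkhorn-dro}\leq\eqref{eq:sinkhorn-droPrime}$ and $\eqref{eq:sinkhorn-dro}\geq\eqref{eq:sinkhorn-droPrime}$ by passing back and forth between a coupling $\nu\in N(\hat\nu_1,\nu_2)$ and its disintegration $\{\nu_{v_1}\}_{v_1\in\X}$ with respect to the first marginal $\hat\nu_1$. Such a disintegration exists by (\ref{eq:sinkhorn-dro-standard-3}). The computation already displayed just before \cref{lem:sinkhorn-dro feasibility} does exactly this for the constraint functional, so the work is mostly bookkeeping.

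\textbf{Step 1: rewriting the cost for a single coupling.} Fix $\nu\in N_{\hat\nu_1}$ with disintegration $\{\nu_{v_1}\}$, and assume $\nu\ll\hat\nu=\hat\nu_1\otimes\hat\nu_2$ (otherwise $\tilde{\mathbbm D}_\phi(\hat\nu,\nu)=+\infty$). Then $\frac{d\nu}{d\hat\nu}(v_1,v_2)=\frac{d\nu_{v_1}}{d\hat\nu_2}(v_2)$ for $\hat\nu_1$-a.e.\ $v_1$. Applying Fubini/Tonelli gives
\[
\mathbbm E_\nu[d]+\alpha\tilde{\mathbbm D}_\phi(\hat\nu,\nu)=\mathbbm E_{\hat\nu_1}\Bigl[\mathbbm E_{\nu_{V_1}}[d(V_1,V_2)]+\alpha\,\mathbbm E_{\hat\nu_2}\bigl[\phi\bigl(\tfrac{d\nu_{V_1}}{d\hat\nu_2}\bigr)\bigr]\Bigr].
\]
The one technical care point is that $\phi(t)=t\log t$ is bounded below by $-1/e$ but $\hat\nu_2$ may be infinite. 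I will therefore split $\phi=\phi^+-\phi^-$ and handle the negative part with the fact that $\frac{d\nu_{v_1}}{d\hat\nu_2}$ integrates to $1$. Alternatively, I will first add $d/\alpha+\log Z$ to work with the relative entropy to the Gibbs kernel $\mathbb Q_{v_1}$, which is nonnegative, so that Tonelli applies cleanly. The objective rewrites similarly: $\mathbbm E_{\nu_2}[\ell]=\mathbbm E_{\hat\nu_1}[\mathbbm E_{\nu_{V_1}}[\ell]]$, since $\nu_2$ is the second marginal of $\nu$.

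\textbf{Step 2: the inequality $\leq$.} Let $\nu_2$ be feasible for \eqref{eq:sinkhorn-dro} and fix $\epsilon>0$. Choose $\nu\in N(\hat\nu_1,\nu_2)$ whose cost is at most $\rho+\epsilon$. By Step 1, $\nu$ is feasible for the $(\rho+\epsilon)$-version of \eqref{eq:sinkhorn-droPrime} and has the same objective value.

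To remove the $\epsilon$, I use a mixing argument in the spirit of \cref{lem:v1=v2}: mix $\nu$ with the coupling induced by the Gibbs kernel, $\hat\nu_1(dv_1)\mathbb Q_{v_1}(dv_2)$, whose cost equals $\underline r$ by \cref{lem:sinkhorn-dro feasibility}. The cost is convex along mixtures because $\tilde{\mathbbm D}_\phi$ is convex. When $\underline r<\rho$ the mixing recovers feasibility at level $\rho$ with vanishing objective loss, provided $\mathbbm E_{\nu_2^0}[\ell]>-\infty$. The boundary case $\rho=\underline r$ instead forces the unique minimizer, so no mixing is needed. I expect this $\epsilon$-slack issue, together with the integrability caveat on $\ell$, to be the main obstacle. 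If the mixing route is unavailable, the fallback is to show directly that the infimum in $\mathbbm S_\alpha$ is attained: the entropic term gives weak compactness and lower semicontinuity of the cost in $\nu$ (uniform integrability of densities via superlinearity of $\phi$), after which no $\epsilon$ is needed at all.

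\textbf{Step 3: the inequality $\geq$.} Conversely, any $\nu\in N_{\hat\nu_1}$ feasible for \eqref{eq:sinkhorn-droPrime} defines $\nu_2\coloneqq\nu(\X\times\cdot)$. By Step 1, $\mathbbm S_\alpha(\hat\nu_1,\nu_2)$ is at most the cost of $\nu$, which is at most $\rho$, and the objective values agree. Taking suprema on both sides yields $\eqref{eq:sinkhorn-dro}=\eqref{eq:sinkhorn-droPrime}$.
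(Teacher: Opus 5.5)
Your proposal is correct and follows the paper's argument. The paper splits on $\rho$ versus $\underline r$, uses uniqueness of the Gibbs coupling at $\rho=\underline r$, and for $\rho>\underline r$ mixes a coupling of cost at most $\rho+\epsilon$ with the strictly feasible Gibbs coupling as in \cref{lem:v1=v2}; it leaves the marginalization inequality $\eqref{eq:sinkhorn-dro}\geq\eqref{eq:sinkhorn-droPrime}$ implicit. The condition $\mathbbm{E}_{\nu_2^0}[\ell]>-\infty$ that you flag is tacitly needed by that mixing step, and your fallback of attaining the infimum in $\mathbbm{S}_\alpha$ (which the paper itself invokes via \citet[Theorem 4.2]{nutz2021eot} in the Sinkhorn reduction) removes both that requirement and the case split.
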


\begin{proof}[Guide to \cref{lem:sinkhorn-droPrime}]
    We separately examine the cases of $\rho$ is smaller, equal, and larger than $\underline{r}$. When $\rho$ is smaller, then clearly both $\eqref{eq:sinkhorn-dro} = - \infty= \eqref{eq:sinkhorn-droPrime}$ on account of an empty feasible region.
    When $\rho = \underline{r}$, then by \cref{lem:sinkhorn-dro feasibility}, $\nu_2^0$ is uniquely feasible to \eqref{eq:sinkhorn-dro}; on the other hand, any feasible $\nu$ to \eqref{eq:sinkhorn-droPrime} must have its disintegration $\{\nu_{v_1}\}_{v_1 \in \mathcal{X}}$ be comprised of members equal to the collection $\{\mathbb{Q}_{v_1}\}_{v_1 \in \mathcal{X}}$, for $v_1$- $\hat\nu_1$ a.e., so that $\eqref{eq:sinkhorn-dro} = \mathbbm{E}_{\nu_2^0}\left[\ell(V_2)\right] = \mathbbm{E}_{\hat\nu_1}\bigl[\mathbbm{E}_{V_2 \sim \mathbb{Q}_{\hat V_1}}[\ell(V_2)]\bigr] = \eqref{eq:sinkhorn-droPrime}.$ When $\rho$ is larger, then the collection $\{\mathbb{Q}_{v_1}\}_{v_1 \in \mathcal{X}}$ yields
\[
\mathbbm{E}_{\hat\nu_1}\left[\mathbbm{E}_{V_2 \sim \mathbb{Q}_{\hat V_1}}\bigl[d(\hat V_1,V_2)\bigr] + \alpha \mathbbm{E}_{\hat\nu_2}\left[\phi\left(\frac{d\mathbb{Q}_{\hat V_1}}{d\hat\nu_2}\right)\right] \right] = \underline{r} < \rho,
\]
so that there exists a $\nu \in N_{\hat\nu_1}$ that is strictly feasible to $\eqref{eq:sinkhorn-droPrime}$, and hence a mixing argument, as in the proof of \Cref{lem:v1=v2}, will establish $\eqref{eq:sinkhorn-dro} = \eqref{eq:sinkhorn-droPrime}$.
\end{proof}

\subsubsection{\textbf{Sinkhorn-DRO $\preceq$ CM-OT DRO}}
\label{sec:sinkhorn-reduction}
We show that \eqref{eq:sinkhorn-dro} can be reduced to a corresponding instance \eqref{eq:primal} of CM-OT with equivalent optimal value, i.e., $\eqref{eq:primal} = \eqref{eq:sinkhorn-dro}$.

To construct an instance \eqref{eq:primal} of CM-OT DRO, we follow the notational convention of \Cref{sec:cmot-problem-setting} to let $\V\coloneqq \X \times \X$ be equipped with $\sigma$-algebra $\mathcal{G} \coloneqq \mathcal{S} \times \mathcal{S}$, $\W\coloneqq\R_+$ be equipped with standard Borel $\sigma$-algebra $\mathcal{B}$, and $\U \coloneqq \V\times\W$ equipped with $\mathcal{F} \coloneqq \mathcal{G}\times \mathcal{B}$.
We let $\hat \mu\coloneqq \hat\nu \otimes\delta_{1}$ and
let
$H: \X \times \X \rightarrow \X$ be the projection mapping $(v_1, v_2) \mapsto v_1$,
$h: \X \rightarrow \reals_+$ be the constant mapping $v_1 \mapsto 1$, $f(u) \coloneqq \ell(v_2)\cdot w$, 
and
$c(\hat u,u)\coloneqq w\cdot d(v_1,v_2) + \alpha\cdot \phi(w) + \iota_{\Delta}(\hat v , v)$ where $\Delta\coloneqq\{(\hat v,v)\in\V^2 : \hat v=v\}$.
To confirm the equality $\eqref{eq:primal} = \eqref{eq:sinkhorn-dro},$ we separately verify the inequalities $\eqref{eq:primal} \geq \eqref{eq:sinkhorn-dro}$ and $\eqref{eq:primal} \leq \eqref{eq:sinkhorn-dro}$.

To see $\eqref{eq:primal}\geq\eqref{eq:sinkhorn-dro}$, let $\nu_2$ be feasible to \eqref{eq:sinkhorn-dro}, and we will design a $\mu$ feasible for $\eqref{eq:primal}$ whose objective value equals that of $\nu_2.$
With the feasibility of $\nu_2$ comes the existence of a coupling $\nu\in\Gamma(\hat\nu_1,\nu_2)$ such that $\nu\ll\hat\nu$ attains the infimum $\mathbbm{S}_\alpha(\hat\nu_1,\nu_2)<+\infty$ {\citep[Theorem 4.2]{nutz2021eot}}.
Explicitly, let $\mu\in\mathcal{P}(\U)$ be the pushforward of $\hat\nu$ under the map $(\hat v_1,\hat v_2)\mapsto \bigl( (\hat v_1,\hat v_2),\frac{d\nu}{d\hat\nu}(\hat v_1,\hat v_2) \bigr)$; correspondingly, the random vector $\hat V=(\hat V_1, \hat V_2)\sim\hat\nu$ yields the random vector $\bigl( \hat V_1, \hat V_2,\frac{d\nu}{d\hat\nu}(\hat V_1, \hat V_2) \bigr) \sim \mu$. Furthermore, the pushforward of $\hat\nu$ under the map $\hat v \mapsto(\hat v, 1;\hat v, \frac{d\nu}{d\hat\nu}(\hat v)$ generates a coupling $\gamma\in\Gamma(\hat\mu,\mu)$ for which we'll write 
\[
\bigl(\hat V, \hat W \coloneqq 1
\,;\,
V \coloneqq \hat V, W \coloneqq \tfrac{d\nu}{d\hat\nu}(\hat V)\bigr) \sim \gamma.
\]
This coupling $\gamma$ satisfies the conditional moment constraint; indeed, with $\{\nu_{\hat v_1}\}_{\hat v_1 \in \X} \subseteq \mathcal{P}(\X)$ denoting the $\hat \nu_1$-a.e.~uniquely determined family of probability measures that is the disintegration of $\nu,$ we find 
\begin{align*}
    \mathbbm{E}_\gamma[W\mid H(\hat V)]
    = \mathbbm{E}_\gamma[W\mid \hat V_1]
    = \mathbbm{E}_{\hat\nu}\!\bigl[ \tfrac{d\nu}{d\hat\nu}(\hat V) \bigm| \hat V_1 \bigr]
    &
    = \mathbbm{E}_{\hat\nu}\!\bigl[ \tfrac{d\hat\nu_1}{d\hat\nu_1}(\hat V_1) \cdot \tfrac{d\nu_{\hat V_1}}{d\hat\nu_2}(\hat V_2) \bigm| \hat V_1 \bigr] 
    \\
    &= \mathbbm{E}_{\hat\nu}\!\bigl[ 1 \cdot \tfrac{d\nu_{\hat V_1}}{d\hat\nu_2}(\hat V_2) \bigm| \hat V_1 \bigr]
    = \mathbbm{E}_{\hat\nu_2}\!\bigl[\tfrac{d\nu_{\hat V_1}}{d\hat\nu_2}(\hat V_2) \bigr]=1.
\end{align*}

Further, the transport constraint holds since with $V = \hat V$, we find
\begin{align*}
    \mathbbm{E}_\gamma[c]
    &=
    \mathbbm{E}_{\hat\nu}\bigl[
        d(\hat V_1,\hat V_2)\cdot \tfrac{d\nu}{d\hat\nu}(\hat V)
    \bigr]
    + \alpha \mathbbm{E}_{\hat\nu}\bigl[
    \phi\bigl(\tfrac{d\nu}{d\hat\nu}(\hat V)\bigr)
    \bigr]
    = \mathbbm{E}_{\nu}[d] + \alpha \mathbbm{D}_\phi(\hat\nu,\nu)
    \leq\rho.
\end{align*}
Finally, we verify equality of objective values
\begin{align}
\mathbbm{E}_\mu[f] = \mathbbm{E}_{\hat\nu}\bigl[\ell(\hat V_2) \cdot \tfrac{d\nu}{d\hat\nu}(\hat V)\bigr] = \mathbbm{E}_{\nu}[\ell] = \mathbbm{E}_{\nu_2}[\ell], 
\end{align}
which confirms the claim.

To see $\eqref{eq:primal} \leq \eqref{eq:sinkhorn-dro}$, let $\mu \in \mathcal{P}(\U)$ be feasible to \eqref{eq:primal} with defined random vector $\left((V_1, V_2),W\right) \sim \mu$ such that $V = (V_1, V_2) \sim \projection_\V \mu$, and we will design a feasible $\nu_2\in\mathcal{P}(\X)$ for \eqref{eq:sinkhorn-dro} with equal objective value. Towards this, we consider the measure $\nu \in \mathcal{P}(\X \times \X)$ defined via $\nu(G)\coloneqq \mathbbm{E}_\mu[W; (V_1,V_2)\in G]$ for any $G\in\mathcal{G} = \mathcal{S} \times \mathcal{S}$ and take its second marginal as our desired $\nu_2$, i.e., $\nu_2(S)\coloneqq \nu(\X\times S)$ for any $S\in\mathcal S$. We proceed to demonstrate the feasibility of $\nu_2$ for \eqref{eq:sinkhorn-dro}. It will be helpful to note the following consequences of the feasibility of $\mu$ for \eqref{eq:primal}: (1) for any $\epsilon>0$ there exists a $\gamma^\epsilon\in\Gamma(\hat\mu,\mu)\cap\Gamma_\W$, for which we'll write 
\(
\bigl
( (\hat V_1,\hat V_2),\hat W
\,;\,
(V_1^\epsilon,V_2^\epsilon),W^\epsilon
\bigr
)\sim\gamma^\epsilon
\) such that
$\bigl
( (\hat V_1,\hat V_2),\hat W
\bigr
) \sim \hat{\mu}$ and $\bigl( (V^\epsilon_1, V^\epsilon_2), W^\epsilon \bigr) \sim \mu$, 
satisfying
\[
\mathbbm{E}_{\gamma^\epsilon}[c]\leq\rho+\epsilon, \quad \text{and} \quad \mathbbm{E}_{\gamma^\epsilon}[W^\epsilon\mid H(\hat V_1,\hat V_2)] = h(\hat V_1) = 1;
\]
and (2) letting $\epsilon > 0$ be arbitrary, the boundedness of the expectation $\mathbbm{E}_{\gamma^\epsilon}[c]$ means $(V_1^\epsilon,V_2^\epsilon)=(\hat V_1,\hat V_2)$ almost surely, which combined with the fact that $(V_1, V_2)$ and $(V^\epsilon_1,V^\epsilon_2)$ are equal in distribution, means $(V_1, V_2)$ and $(\hat V_1, \hat V_2)$ are equal in distribution, i.e., $(V_1, V_2) \sim \hat{\nu}$.
In summary, $\projection_\V \mu = \hat \nu$.

Armed with the above consequences, we proceed to check the feasibility of $\nu_2$ for \eqref{eq:sinkhorn-dro}. First, $\nu_2$ is a probability measure because $\W = \reals_+$ confirms its nonnegativity and $\nu_2(\X) = \nu(\X \times \X) = \mathbbm{E}_\mu\left[W\right] = \mathbbm{E}_\mu\left[W^\epsilon\right] = 1.$ Second, to verify $\nu \ll \hat\nu$
we note that it is by construction of $\nu$ that $\nu \ll \projection_\V \mu$—in fact, $\mathbbm{E}_{\mu}\left[W \mid V\right] = \frac{d\nu}{d\projection_\V \mu}(V) $, $\projection_\V \mu$-almost surely—so the aforementioned consequence $\projection_\V \mu = \hat \nu$ confirms $\nu \ll \hat\nu.$ 
Third, to confirm $\mathbbm{S}_\alpha(\hat\nu_1,\nu_2) \leq \rho$, we note that for arbitrary $\epsilon>0$, 
both 
\[
\mathbbm{D}_\phi(\hat\nu,\nu) = 
\mathbbm{E}_{\projection_\V\mu}\!\bigl[\phi\bigl(\tfrac{d\nu}{d\projection_\V \mu}(V) \bigr)\bigr] = \mathbbm{E}_{\mu}\!\left[\phi(\mathbbm{E}_\mu[W \mid V])\right] \leq \mathbbm{E}_{\mu}[\phi(W)] = \mathbbm{E}_{\gamma^\epsilon}[\phi(W^\epsilon)],
\]
and
\begin{align*}
\mathbbm{E}_{\nu}[d] = \mathbbm{E}_{\projection_\V \mu}\bigl[d(V_1, V_2) \cdot \tfrac{d\nu}{d\projection_\V \mu}(V)\bigr] &= \mathbbm{E}_{\projection_\V \mu}[d(V_1, V_2) \cdot \mathbbm{E}_{\mu}[W \!\mid\! V]] \\
&= \mathbbm{E}_\mu[W\cdot d(V_1, V_2)] = \mathbbm{E}_\mu[W^\epsilon\cdot d(V^\epsilon_1, V^\epsilon_2)],
\end{align*}
so that 
\[
\mathbbm{S}_\alpha(\hat\nu_1,\nu_2) \leq \mathbbm{E}_{\nu}[d] + \alpha \mathbbm{D}_\phi(\hat\nu,\nu) \leq \mathbbm{E}_\mu\left[W^\epsilon\cdot d(V^\epsilon_1, V^\epsilon_2)\right] + \mathbbm{E}_{\gamma^\epsilon}[\alpha \cdot \phi(W^\epsilon)] = \mathbbm{E}_{\gamma^\epsilon}[c] \leq \rho + \epsilon.
\]
With $\nu_2$
feasible to $\eqref{eq:sinkhorn-dro}$, we finally confirm that it presents an objective equal to that of $\mu$: 
\[
\mathbbm{E}_{\nu_2}[\ell] 
= 
\mathbbm{E}_{\projection_{\V}\mu}\bigl[\ell(V_2) \cdot \tfrac{d\nu}{d\projection_\V\mu}(V_1, V_2)\bigr] 
=
\mathbbm{E}_{\projection_\V\mu}[\ell(V) \cdot \,\mathbbm{E}_\mu[W \mid V]
] =
\mathbbm{E}_{\mu}[\ell(V) \cdot \,W]
\equiv
\mathbbm{E}_\mu[f(V,W)].
\] 
In summary, $\eqref{eq:primal} \leq \eqref{eq:sinkhorn-dro}$.

\subsubsection{\textbf{On \cref{assn:**a,assn:**b} in Sinkhorn-DRO}}
Having shown above that there is a family of instances to CM-OT DRO that captures Sinkhorn-DRO, we now remark briefly on the satisfaction of \cref{assn:**a,assn:**b} by this family. 

By \cref{lem:sinkhorn-dro feasibility}, we see that so long as $\rho > \underline{r}$ and $\mathbbm{E}_{\nu_2^0}[\ell]>-\infty$ for $\nu^0_2$ given in \eqref{eq:nu0_2}, then \cref{assn:**a} holds.
Indeed, for $\nu^0(d\hat v_1,d\hat v_2) \coloneqq \hat\nu_1(d\hat v_1)\,\mathbbm{Q}_{\hat v_1}(d\hat v_2)$ whose second marginal coincided with $\nu^0_2$, define $\gamma^0$ analogous to the reduction in \ref{sec:sinkhorn-reduction} by
\[
(\hat V,\hat W;V^0,W^0)=\Bigl((\hat V_1,\hat V_2),\hat W\equiv 1; (\hat V_1,\hat V_2), W^0\equiv \tfrac{d\nu^0}{d\hat\nu}(\hat V) = \tfrac{d\mathbbm{Q}_{\hat V_1}}{d\hat\nu_2}(\hat V_2)\Bigr)\sim\gamma^0,
\]
for $\hat V=(\hat V_1,\hat V_2)\sim \hat\nu_1\otimes\hat\nu_2$.
Then $\mathbbm{E}_{\gamma^0}[f(V^0,W^0)] =\mathbbm{E}_{\nu^0_2}[\ell]>-\infty$, certifying \cref{assn:**a}.

Moreover,
\cref{assn:**b}   holds by \cref{lem:**a_implies_**b}.
Indeed, let
\[
(\hat V,\hat W;V^\pm,W^\pm)=\Bigl((\hat V_1,\hat V_2),\hat W\equiv 1; (\hat V_1,\hat V_2), W^\pm\equiv (1\pm\beta)\tfrac{d\mathbbm{Q}_{\hat V_1}}{d\hat\nu_2}(\hat V_2)\Bigr)\sim\gamma^\pm.
\]
Then $ \mathbbm{E}_{\gamma^\pm}[W^\pm\mid H(\hat V)] = 1\pm\beta$, and $\mathbbm{E}_{\gamma^\pm}[c] = (1\pm\beta)\underline{r} + \alpha (1\pm\beta)\log(1\pm\beta)\leq\rho$ for $\beta>0$ sufficiently small by continuity in $\beta$.
Finally, $\mathbbm{E}_{\gamma^\pm}[f(V^\pm,W^\pm)] = (1\pm\beta)\mathbbm{E}_{\hat\nu_2^0}[\ell]>-\infty$, veryfing \cref{assn:**b}.

\subsubsection{\textbf{Duality in Sinkhorn-DRO}}
In this section, we show, via a perturbation methodology, that $\eqref{eq:sinkhorn-dro}$ admits a dual problem in
\begin{align}
    \label{eq:sinkhorn-dro-dual}
    \min\left(\inf_{\lambda > 0} \lambda \left(\rho - \underline{r}\right) + \lambda\alpha\mathbbm{E}_{\hat\nu_1}\left[\log \mathbbm{E}_{V_2 \sim \mathbbm{Q}_{ v_1}}\left[\exp\left(\frac{\ell(V_2)}{\lambda\alpha} \right)\right]\right], \esssup_{\hat\nu_2}\ell\right),
    \tag{$\text{D}_{\rm S}$}
\end{align}

We then proceed to derive \eqref{eq:sinkhorn-dro-dual} and characterize when \eqref{eq:sinkhorn-dro} = \eqref{eq:sinkhorn-dro-dual}.  
We begin by representing the reformulation $\eqref{eq:sinkhorn-droPrime}$ with the concave bifunction
\[
V(r, \nu) \coloneqq \mathbbm{E}_{\hat\nu_1}\bigl[\mathbbm{E}_{\nu_{\hat V_1}}\left[\ell(V_2)\right]\bigr] - \iota_{N_{\hat\nu_1}}(\nu) - \iota_{(-\infty,r]} \left(\mathbbm{E}_{\hat\nu_1}\left[\mathbbm{E}_{V_2 \sim \nu_{V_1}}\left[d(V_1,V_2)\right] + \alpha \mathbbm{E}_{\hat\nu_2}\left[\phi\left(\frac{d\nu_{V_1}}{d\hat\nu_2}\right)\right] \right] \right),
\]
which yields the perturbation function $v(r)\coloneqq \sup_{\nu} V(r,\nu)$ as well as a dual bifunction $V_{\sf d}$ that satisfies 
\begin{align*}
V_{\sf d}(0, -\lambda) &=\!\! \sup_{\nu \in N_{\hat\nu_1}} \! \sup_r \mathbbm{E}_{\hat\nu_1}\!\bigl[\mathbbm{E}_{\nu_{\hat V_1}}\!\!\left[\ell(V_2)\right]\bigr] - \iota_{(-\infty,r]}\!\! \left(\mathbbm{E}_{\hat\nu_1}\!\!\left[\mathbbm{E}_{V_2 \sim \nu_{V_1}}\!\left[d(V_1,V_2)\right] + \alpha \mathbbm{E}_{\hat\nu_2}\!\!\left[\phi\!\left(\frac{d\nu_{V_1}}{d\hat\nu_2}\right)\right] \right] \right) - \lambda r.\\
&= 
\begin{cases}
    -\lambda \underline{r} + \mathbbm{E}_{\hat\nu_1}\left[\lambda\alpha \log \mathbbm{E}_{V_2 \sim \mathbbm{Q}_{ v_1}}\left[\exp\left(\frac{\ell(V_2)}{\lambda\alpha} \right)\right]\right]; & \lambda > 0 \\
    \esssup_{\hat\nu_2}\ell; & \lambda = 0\\
    -\infty; & \lambda < 0.
\end{cases}
\end{align*}
Indeed, when $\lambda > 0,$
\begin{align*}
&\sup_{\{\nu_{v_1}\}_{v_1 \in \mathcal{X}} \subseteq \mathcal{P}(\mathcal{X})} \mathbbm{E}_{\hat\nu_1}\bigl[\mathbbm{E}_{\nu_{\hat V_1}}\left[\ell(V_2)\right]\bigr]  - \lambda \left(\mathbbm{E}_{\hat\nu_1}\left[\mathbbm{E}_{V_2 \sim \nu_{V_1}}\left[d(V_1,V_2)\right] + \alpha \mathbbm{E}_{\hat\nu_2}\left[\phi\left(\frac{d\nu_{V_1}}{d\hat\nu_2}\right)\right] \right] \right) \\
&= \lambda\alpha \cdot \mathbbm{E}_{\hat\nu_1}\left[ \bigl(\tilde{\mathbbm D}_{\phi}(\hat\nu_2,\cdot)\bigr)^*\left(\frac{\ell(V_2) - \lambda d(\hat V_1, V_2)}{\lambda\alpha}\right) \right] =  \lambda\alpha \cdot\mathbbm{E}_{\hat\nu_1}\left[\log \mathbbm{E}_{\hat \nu_2}\left[\exp\left(\frac{\ell(V_2) - \lambda d(\hat V_1, V_2)}{\lambda\alpha}\right)\right]\right]\\
&= -\lambda \underline{r} + \mathbbm{E}_{\hat\nu_1}\left[\lambda\alpha \log \mathbbm{E}_{V_2 \sim \mathbbm{Q}_{ v_1}}\left[\exp\left(\frac{\ell(V_2)}{\lambda\alpha} \right)\right]\right],
\end{align*}
and when $\lambda = 0,$ clearly 
\[
\sup\Bigl\{ \mathbbm{E}_{\hat\nu_1}\!\bigl[\mathbbm{E}_{\nu_{\hat V_1}}\![\ell(V_2)]\bigr] : \{\nu_{v_1}\}_{v_1 \in \mathcal{X}} \subseteq \mathcal{P}(\mathcal{X}),\; \mathbbm{E}_{\hat\nu_1}\!\bigl[\mathbbm{E}_{V_2 \sim \nu_{V_1}}\![d(V_1,V_2)] + \alpha \mathbbm{E}_{\hat\nu_2}\!\bigl[\phi\bigl(\tfrac{d\nu_{V_1}}{d\hat\nu_2}\bigr)\bigr] \bigr] < \infty   \Bigr\} = \esssup_{\hat\nu_2}\ell.
\]
Given $\rho \geq 0,$
we can then define the translated bifunction $G(\tau, \nu) \coloneqq V(\rho + \tau, \nu)$, along with its perturbation $p(\tau)\coloneqq \sup_{\nu}G(\tau, \nu) = v(\rho + \tau)$, which yields the claimed dual problem 
\begin{align*}
q(0)\coloneqq \inf_{\lambda \geq 0} G_{\sf d}(0, -\lambda) &= \inf_{\lambda \geq 0} \lambda \rho + V_{\sf d}(0, -\lambda) \\
 &= \min\left(\inf_{\lambda > 0} \lambda \left(\rho - \underline{r}\right) + \lambda\alpha\mathbbm{E}_{\hat\nu_1}\left[\log \mathbbm{E}_{V_2 \sim \mathbbm{Q}_{ v_1}}\left[\exp\left(\frac{\ell(V_2)}{\lambda\alpha} \right)\right]\right], \esssup_{\hat\nu_2}\ell\right) = \eqref{eq:sinkhorn-dro-dual}. 
\end{align*}

By \cref{prop:convex_duality}\ref{convex_duality:normality}, the identity $\eqref{eq:sinkhorn-dro} = \eqref{eq:sinkhorn-dro-dual}$ (i.e., $p(0) = q(0)$) holds precisely when $(\cl p)(0) = p(0)$ (i.e., $(\cl v)(\rho) = v(\rho)$).
This highlights the implicit dependence of $p$ on $\rho.$

In the following example, we show that $v$ can fail to be closed at $\rho = \underline{r}$ and yet be closed at all $\rho > \underline{r}.$ This boundary behavior clarifies the scope of Theorem 1(II) of \cite{wang2025sinkhorn}.
\begin{example}[Clarifying the endpoint in Theorem 1(II) of \cite{wang2025sinkhorn}]
\label{ex:clarifying-endpoint}
Let $\X\coloneqq[1,\infty)$ be endowed with the standard subspace topology, $\hat\nu_1=\delta_1$, $\hat\nu_2$ be heavy-tailed, i.e., $\mathbbm{E}_{\hat\nu_2}[\exp(t\hat V_2)] = +\infty$ for all $t>0,$ but with finite mean $\mathbbm{E}_{\hat\nu_2}[\hat V_2] \in \reals$ (e.g., Pareto $\hat\nu_2([t,\infty)) = t^{-p}$ with $p>1$), and $\hat\nu\coloneqq\hat\nu_1\otimes\hat\nu_2$.
Also let $d\equiv0$, $\ell(\hat v_2)=\hat v_2$, and $\alpha > 0$ be arbitrary.
\end{example}
\begin{proof}[Guide to \cref{ex:clarifying-endpoint}]
Then \eqref{eq:sinkhorn-dro-standard-1} and \eqref{eq:sinkhorn-dro-standard-2} hold since $d \equiv 0$; \eqref{eq:sinkhorn-dro-standard-3} holds since $\X$ is a standard Borel space.
Moreover, $Z(\hat v_1)\coloneqq \mathbbm{E}_{\hat\nu_2}[\exp(-d(\hat v_1,\hat V_2)/\alpha)] = 1$, so that $\underline{r} \coloneqq -\alpha \mathbbm{E}_{\hat\nu_1}[\log Z(\hat V_1) ] = 0$ and $\frac{d\mathbb{Q}_{v_1}}{d\hat\nu_2} = 1$ for all $v_1.$
However, when $\rho = \underline{r},$
\begin{align*} 
\eqref{eq:sinkhorn-dro}= v(\underline{r}) &= \mathbbm{E}_{\hat \nu_1}\bigl[\mathbbm{E}_{\mathbb{Q}_{\hat V_1}}[\ell(\hat V_2)]\bigr] = \mathbbm{E}_{\hat\nu_2}[\hat V_2] < +\infty \\
&= 
\min \Bigl\{
    \inf_{\lambda>0} \lambda\cdot 0 + \lambda\alpha\mathbbm{E}_{\hat V_1\sim\hat\nu_1}\bigl[ \log \mathbbm{E}_{\hat V_2\sim\hat\nu_2}\bigl[\exp\bigl\{\tfrac{\ell(\hat V_2) - 0}{\lambda\alpha}\bigr\}\bigr]\bigr]
    \;,\;
    \esssup_{\hat\nu_2}\ell
    \Bigl\}
    \\
    &=\min\Bigl\{\inf_{\lambda>0} \lambda \alpha \log \mathbbm{E}_{\hat\nu_2}\bigl[\exp\{\tfrac{\hat V_2}{\lambda\alpha}\}\bigr],+\infty
    \Bigr\}
= \eqref{eq:sinkhorn-dro-dual}.
\end{align*}
On the other hand, we can show that $v(r) = +\infty$ for all $r > \underline{r}$ and hence $v$ is closed at all $\rho > \underline{r},$ equiv., $\eqref{eq:sinkhorn-dro} = \eqref{eq:sinkhorn-dro-dual}$ for all $\rho > \underline{r}.$ To see this, suppose $v(r) < +\infty$ for some $r > \underline{r}$, which by the monotonicity of $v$, means $v(\underline{r}) \leq v(r) \in \reals.$ If $v(\underline{r}) = v(r)$, then $v$ would be constant over $[\underline{r}, r)$ so that $v$ is closed at $\underline{r}$, a contradiction; hence, assuming $v$ is finite but nonconstant over $(\underline{r}, r)$, there exists an $r^*$ in this interval with some $\lambda > 0$ as supergradient. This means $- \lambda \underline{r} + \mathbbm{E}_{\hat\nu_1}\left[\lambda\alpha \log \mathbbm{E}_{V_2 \sim \mathbbm{Q}_{ v_1}}\left[\exp\left(\frac{\ell(V_2)}{\lambda\alpha} \right)\right]\right] = V_{\sf d}(0, -\lambda) = -v^\circ(\lambda) < +\infty$, which contradicts the heavy-tail property of $\hat\nu_2.$
\end{proof}

The counterexample highlights a possible complication that can arise around $\underline{r}$---namely, that it can be a relative boundary point for $\dom v$ at which $v$ and $\cl v$ may differ. Closedness of $v$, equivalently, $\eqref{eq:sinkhorn-dro} = \eqref{eq:sinkhorn-dro-dual}$ holding uniformly over all $\rho,$ can occur in several ways. Trivially, if $v \equiv -\infty,$ then $v$ is closed. A less trivial sufficient condition is when $v(\underline{r}) = \mathbbm{E}_{\hat\nu_1}[\mathbbm{E}_{V_2 \sim \mathbb{Q}_{\hat V_1}}[\ell(V_2)]] = +\infty$.
In fact, when  $v(\underline{r}) = \mathbbm{E}_{\hat\nu_1}[\mathbbm{E}_{V_2 \sim \mathbb{Q}_{\hat V_1}}[\ell(V_2)]] \in \reals$, then the properness of $v$ (which in fact amounts to a uniform tail control of $\ell(V_2)$, for $V_2 \sim \mathbb{Q}_{v_1}$) also suffices. These insights lead to sufficiency conditions in terms of the parameters $d, \ell, \hat\nu_2, \alpha, \hat\nu_1$ in order for $\eqref{eq:sinkhorn-dro} = \eqref{eq:sinkhorn-dro-dual}$ to hold uniformly in $\rho.$

\begin{observation}
    \label{lem:v-proper}
    Let $v(r) \coloneqq \sup_\nu V(r, \nu)$. 
    If $v\left(\underline r \right) = \mathbbm{E}_{\hat\nu_1}\bigl[\mathbbm{E}_{V_2 \sim \mathbb{Q}_{\hat V_1}}[\ell(V_2)]\bigr]$ is finite, then
    \[
    \text{$v$ is proper} \iff \exists \lambda^* >0: \mathbbm{E}_{\hat\nu_1}\left[ \log \mathbbm{E}_{V_2 \sim \mathbbm{Q}_{ v_1}}\left[\exp\left(\frac{\ell(V_2)}{\lambda\alpha} \right)\right]\right] <+\infty.
    \]
\end{observation}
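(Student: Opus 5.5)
The argument rests on the characterization of properness for the concave perturbation function $v$. Since $v$ is concave and nondecreasing in $r$, with $v(r)=-\infty$ for $r<\underline r$ by \cref{lem:sinkhorn-dro feasibility} and $v(\underline r)\in\R$ by hypothesis, properness of $v$ amounts to $v<+\infty$ everywhere. For the upper bound we use the dual bifunction computed above: for every $\lambda>0$ and every $r$, weak duality gives
\[
v(r)\leq \lambda r + V_{\sf d}(0,-\lambda) = \lambda(r-\underline r) + \lambda\alpha\,\mathbbm{E}_{\hat\nu_1}\Bigl[\log \mathbbm{E}_{V_2\sim\mathbbm{Q}_{\hat V_1}}\bigl[\exp\bigl(\tfrac{\ell(V_2)}{\lambda\alpha}\bigr)\bigr]\Bigr].
\]

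For the direction ($\Leftarrow$), take $\lambda\coloneqq\lambda^*$ in the bound above. The right-hand side is then finite for every $r$, so $v<+\infty$ everywhere. Combined with $v(\underline r)\in\R$, this shows $v$ is proper.

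For the direction ($\Rightarrow$), suppose $v$ is proper. Then $v$ is finite on $[\underline r,+\infty)$, and it is concave and nondecreasing there. Fix any $r^*>\underline r$, which lies in the interior of $\dom v$. In one dimension a finite concave function has a nonempty superdifferential at interior points of its domain, so \cref{prop:convex_duality}\ref{cond:slater} (in its concave form) applies. Monotonicity forces any supergradient to satisfy $\lambda\geq0$.

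We must exclude $\lambda=0$. If $0$ is a supergradient at $r^*$, then $v(r)\leq v(r^*)$ for all $r$, so $v$ is constant on $[r^*,\infty)$. In that case we pick a different point. If $v$ is constant on all of $[\underline r,\infty)$, then $V_{\sf d}(0,-\lambda)=\sup_r\{v(r)-\lambda r\}=v(\underline r)-\lambda\underline r$ is finite for every $\lambda>0$, and the conclusion is immediate. Otherwise some interior point has a strictly positive supergradient, because the left derivative is positive wherever $v$ is still strictly increasing.

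With a supergradient $\lambda^*>0$ at $r^*$, we have $v(r)\leq v(r^*)+\lambda^*(r-r^*)$ for all $r$. Hence
\[
V_{\sf d}(0,-\lambda^*)=\sup_r\{v(r)-\lambda^* r\}\leq v(r^*)-\lambda^* r^*<\infty.
\]
By the explicit formula for $V_{\sf d}$, this is exactly the finiteness of $\mathbbm{E}_{\hat\nu_1}[\log\mathbbm{E}_{\mathbbm{Q}}[\exp(\ell/(\lambda^*\alpha))]]$, since $\lambda^*\underline r$ is finite. This mirrors the argument in \cref{ex:clarifying-endpoint}.

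The main obstacle is the identification $V_{\sf d}(0,-\lambda)=\sup_r\{v(r)-\lambda r\}$ together with the explicit conjugate formula. This relies on the Donsker–Varadhan computation done above, applied $\hat\nu_1$-a.e. after a measurable selection via \eqref{eq:sinkhorn-dro-standard-3}. We take that computation as established. We must also check that the $\log$-expectation cannot equal $-\infty$, which holds because $v(\underline r)$ finite forces $\mathbbm{E}_{\mathbbm{Q}}[\ell]>-\infty$ and Jensen then gives a lower bound.
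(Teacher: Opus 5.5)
Your proof is correct and follows essentially the paper's argument. For ($\Rightarrow$), superdifferentiability of the finite concave $v$ at an interior point of its domain gives $V_{\sf d}(0,-\lambda)=\sup_r\{v(r)-\lambda r\}<+\infty$ for some $\lambda>0$. For ($\Leftarrow$), your direct bound $v(r)\le\lambda^* r+V_{\sf d}(0,-\lambda^*)$ is the paper's contrapositive run forwards: there, an improper $v$ forces $v\equiv+\infty$ on $(\underline r,+\infty)$, hence $V_{\sf d}(0,-\lambda)=+\infty$ for all $\lambda>0$.

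The one cosmetic difference is how a zero supergradient is handled. The paper skips your case split by noting that $\{\lambda\ge 0: V_{\sf d}(0,-\lambda)<+\infty\}$ is an interval $[\underline\lambda,+\infty)$, so any supergradient $\lambda_0\ge 0$ already gives finiteness at every $\lambda>\lambda_0$.
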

\begin{proof}[Guide to \cref{lem:v-proper}]
    For the $(\Rightarrow)$ statement, let $v$ be proper. Then the interval $(\underline{r}, +\infty) \in \intr \dom v$ so that $v$ is superdifferentiable at every member; in other words, $\dom v^\circ = \{\lambda \geq 0: -v^\circ(\lambda) < \infty\} \neq \varnothing$.
Further, since $-v^\circ$ is nonincreasing,
we conclude that $\dom v^\circ = [\underline{\lambda}, +\infty)$ for some $\underline{\lambda} \geq 0.$ This means $- \lambda \underline{r} + \mathbbm{E}_{\hat\nu_1}\left[\lambda\alpha \log \mathbbm{E}_{V_2 \sim \mathbbm{Q}_{ v_1}}\left[\exp\left(\frac{\ell(V_2)}{\lambda\alpha} \right)\right]\right] = V_{\sf d}(0, -\lambda) = -v^\circ(\lambda) < +\infty$ for all $\lambda \geq \underline{\lambda}.$

For the $(\Leftarrow)$ statement, let us assume $v$ is improper. Then $v \equiv +\infty$ over $(\underline{r}, +\infty)$, whereas $v(\underline{r}) \in \reals$. Hence, $- \lambda \underline{r} + \mathbbm{E}_{\hat\nu_1}\left[\lambda\alpha \log \mathbbm{E}_{V_2 \sim \mathbbm{Q}_{ v_1}}\left[\exp\left(\frac{\ell(V_2)}{\lambda\alpha} \right)\right]\right] = V_{\sf d}(0, -\lambda) = -v^\circ(\lambda) = +\infty$ for all $\lambda > 0.$
\end{proof}
\begin{observation}[Sufficiency for $\eqref{eq:sinkhorn-dro} = \eqref{eq:sinkhorn-dro-dual}$ for all $\rho$] \label{lem:sinkhorn-dro StrongDuality}
    \[
    \text{If } \quad
\mathbbm{E}_{\hat\nu_1}\bigl[\mathbbm{E}_{V_2 \sim \mathbb{Q}_{\hat V_1}}[\ell(V_2)]\bigr] \in \reals \quad \text{and} \quad \exists \lambda^* >0: \mathbbm{E}_{\hat\nu_1}\left[ \log \mathbbm{E}_{V_2 \sim \mathbbm{Q}_{ v_1}}\left[\exp\left(\frac{\ell(V_2)}{\lambda^*\alpha} \right)\right]\right] <+\infty,
\]
or 
\[
\text{if } \quad \mathbbm{E}_{\hat\nu_1}\bigl[\mathbbm{E}_{V_2 \sim \mathbb{Q}_{\hat V_1}}[\ell(V_2)]\bigr] = +\infty,
\]
    then $\eqref{eq:sinkhorn-dro} = \eqref{eq:sinkhorn-dro-dual}$ for all $\rho.$ 
\end{observation}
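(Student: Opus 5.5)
The statement to prove is \cref{lem:sinkhorn-dro StrongDuality}. The plan is to reduce it to closedness of the perturbation function $v$. By \cref{prop:convex_duality}\ref{convex_duality:normality}, applied to the translated bifunction $G(\tau,\nu)=V(\rho+\tau,\nu)$, the identity $\eqref{eq:sinkhorn-dro}=\eqref{eq:sinkhorn-dro-dual}$ at a given $\rho$ is equivalent to $(\cl v)(\rho)=v(\rho)$. Here $v$ is concave and nondecreasing on $\R$, and \cref{lem:sinkhorn-droPrime} gives $\eqref{eq:sinkhorn-dro}=v(\rho)$. So it suffices to show that $v$ is upper semicontinuous at every $\rho$. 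We split according to the position of $\rho$ relative to $\underline r$, using \cref{lem:sinkhorn-dro feasibility}.

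First, the region $\rho<\underline r$. There $v(\rho)=-\infty$, and since $\dom v\subseteq[\underline r,\infty)$, the set $(-\infty,\underline r)$ is open and lies outside the domain. Hence $\limsup_{r\to\rho}v(r)=-\infty$, and closedness holds trivially.

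Second, the region $\rho>\underline r$. Here $\rho\in\intr\dom v$, because $v(r)\ge v(\underline r)>-\infty$ for all $r\ge\underline r$ by monotonicity.
\begin{itemize}
\item Under the first hypothesis, $v(\underline r)=\mathbbm{E}_{\hat\nu_1}[\mathbbm{E}_{\mathbb Q_{\hat V_1}}[\ell]]\in\R$, and \cref{lem:v-proper} gives that $v$ is proper. A proper concave function on $\R$ is continuous on the interior of its domain, so $v$ is closed at $\rho$.
\item Under the second hypothesis, $v(\underline r)=+\infty$, so by monotonicity $v\equiv+\infty$ on $[\underline r,\infty)$. Then $v(\rho)=+\infty\ge\limsup_{r\to\rho}v(r)$, and closedness again holds.
\end{itemize}

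The main obstacle is the boundary point $\rho=\underline r$, which is exactly where \cref{ex:clarifying-endpoint} shows closedness can fail.
\begin{itemize}
\item Under the second hypothesis this case is again trivial, since $v(\underline r)=+\infty$.
\item Under the first hypothesis, $v$ is proper with finite values on $[\underline r,\infty)$ and $-\infty$ to the left. We need $\lim_{r\downarrow\underline r}v(r)\le v(\underline r)$. Rather than rely on a general fact about concave functions on $\R$ (such a function need not be u.s.c.\ at a boundary point), we use the dual directly. With $\lambda^*$ from the hypothesis, the dual value $D(\lambda^*):=-\lambda^*\underline r+\lambda^*\alpha\,\mathbbm{E}_{\hat\nu_1}[\log\mathbbm{E}_{\mathbb Q}[\exp(\ell/(\lambda^*\alpha))]]$ is finite. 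Since $D(\lambda)=V_{\sf d}(0,-\lambda)$, we get $\sup_\nu V(r,\nu)\le\lambda r+D(\lambda)$ for every $r$ and every $\lambda\ge\lambda^*$. Taking $r\downarrow\underline r$ gives $\limsup_{r\downarrow\underline r}v(r)\le\inf_{\lambda\ge\lambda^*}\{\lambda\underline r+D(\lambda)\}$.
\item It remains to show this infimum equals $v(\underline r)=\mathbbm{E}_{\hat\nu_1}[\mathbbm{E}_{\mathbb Q}[\ell]]$. Write $t=1/(\lambda\alpha)$. Then $\lambda\underline r+D(\lambda)=\alpha^{-1}\cdots=\mathbbm{E}_{\hat\nu_1}\bigl[t^{-1}\log\mathbbm{E}_{\mathbb Q}[e^{t\ell}]\bigr]$. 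For each $\hat v_1$, $t^{-1}\log\mathbbm{E}_{\mathbb Q}[e^{t\ell}]$ decreases to $\mathbbm{E}_{\mathbb Q}[\ell]$ as $t\downarrow0$; this is the cumulant generating function argument, valid since $\mathbbm{E}_{\mathbb Q}[e^{t^*\ell}]<\infty$ a.e.
\item The integrands are dominated above by their value at $t^*=1/(\lambda^*\alpha)$, which is integrable by hypothesis. Monotone convergence (downward, with an integrable upper bound) then gives convergence of the outer $\hat\nu_1$-expectation to $v(\underline r)$.
\end{itemize}

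The delicate step is this exchange of the limit $t\downarrow0$ with $\mathbbm{E}_{\hat\nu_1}$. It needs the inner integrand to be finite a.e.\ and bounded below by something integrable, which here is $\mathbbm{E}_{\mathbb Q_{\hat V_1}}[\ell]$, integrable by the first hypothesis via Jensen. Combining the three regions gives closedness of $v$ at every $\rho$, hence $\eqref{eq:sinkhorn-dro}=\eqref{eq:sinkhorn-dro-dual}$ for all $\rho$.
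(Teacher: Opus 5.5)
There is a genuine gap, although the paper's own proof has the same flaw. Your reduction in the first paragraph treats \eqref{eq:sinkhorn-dro} $=$ \eqref{eq:sinkhorn-dro-dual} at $\rho$ as equivalent to upper semicontinuity of $v$ at $\rho$. That silently identifies $\cl v$ with the upper semicontinuous hull, which is legitimate only when $v<+\infty$ everywhere. The closure in \cref{prop:convex_duality} is the one equal to the dual value (the biconjugate). For a concave $v$ that takes the value $+\infty$ somewhere, this closure is identically $+\infty$.

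That is exactly the situation under the second hypothesis, where $v(\underline r)=+\infty$. There your treatment of $\rho<\underline r$ fails, and in fact the claimed equality is false for such $\rho$:
\begin{itemize}
\item By Jensen, $\lambda\alpha\,\mathbbm{E}_{\hat\nu_1}[\log\mathbbm{E}_{\mathbb{Q}_{\hat V_1}}[\exp(\ell(V_2)/(\lambda\alpha))]]\ge\mathbbm{E}_{\hat\nu_1}[\mathbbm{E}_{\mathbb{Q}_{\hat V_1}}[\ell(V_2)]]=+\infty$ for every $\lambda>0$.
\item Since $\mathbb{Q}_{v_1}\ll\hat\nu_2$, also $\esssup_{\hat\nu_2}\ell\ge\mathbbm{E}_{\hat\nu_1}[\mathbbm{E}_{\mathbb{Q}_{\hat V_1}}[\ell(V_2)]]=+\infty$.
\item Hence \eqref{eq:sinkhorn-dro-dual} $=+\infty$ for every $\rho$, whereas \eqref{eq:sinkhorn-dro} $=-\infty$ for $\rho<\underline r$ by infeasibility.
\end{itemize}
The pointwise fact $\limsup_{r\to\rho}v(r)=-\infty$ that you cite is true, but it is not what \cref{prop:convex_duality} needs. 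The paper handles this case in one line (``$\dom v=[\underline r,+\infty)$, so that $v=\cl v$''), which has the same defect, so you are not missing an idea the paper supplies. What can actually be proved under the second hypothesis is equality for $\rho\ge\underline r$ only, where both sides equal $+\infty$ by weak duality.

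Under the first hypothesis your argument is correct. There $v$ is proper, via \cref{lem:v-proper} or directly from your bound $v(r)\le\lambda^* r+D(\lambda^*)<+\infty$. Properness is what makes the closure coincide with the upper semicontinuous hull, so say so before using upper semicontinuity in the region $\rho<\underline r$.

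Compared with the paper, the analytic core is the same. With $K_{v_1}(t)\coloneqq\log\mathbbm{E}_{\mathbb{Q}_{v_1}}[e^{t\ell(V_2)}]$, both arguments use the monotone limit $K_{\hat v_1}(t)/t\downarrow\mathbbm{E}_{\mathbb{Q}_{\hat v_1}}[\ell]$ as $t\downarrow0$. The integrable upper bound comes from $t^*=1/(\lambda^*\alpha)$ and the lower bound from Jensen.

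The organization differs:
\begin{itemize}
\item The paper first shows $\inf_{t>0}\mathbbm{E}_{\hat\nu_1}[K_{\hat V_1}(t)]/t=\mathbbm{E}_{\hat\nu_1}[\mathbbm{E}_{\mathbb{Q}_{\hat V_1}}[\ell(V_2)]]$. It then evaluates \eqref{eq:sinkhorn-dro-dual} directly in each regime: $-\infty$ for $\rho<\underline r$ by sending $\lambda\to\infty$, and $v(\underline r)$ at $\rho=\underline r$. It uses Slater only for $\rho>\underline r$.
\item You instead verify closedness of $v$ pointwise. In the interior you use continuity of concave functions; at the boundary you use the weak-duality bound.
\end{itemize}
Evaluating the dual directly is the more robust of the two, because it never requires deciding which notion of closure applies. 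That is precisely the point that goes wrong in the second case.
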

\begin{proof}[Guide to \cref{lem:sinkhorn-dro StrongDuality}]
The second hypothesis that $v(\underline{r}) = \mathbbm{E}_{\hat\nu_1}\bigl[\mathbbm{E}_{V_2 \sim \mathbb{Q}_{\hat V_1}}[\ell(V_2)]\bigr] = +\infty$ trivially yields $\dom v = [\underline{r}, +\infty)$, so that $v = \cl v$. So we proceed with the first hypothesis.
Let us introduce the shorthand $K_{v_1}(t) \coloneqq \log \mathbbm{E}_{V_2 \sim \mathbbm{Q}_{ v_1}}\left[\exp\left(t \cdot \ell(V_2) \right)\right]$ for the cumulant generating function of $\ell(V_2)$ when $V_2 \sim \mathbb{Q}_{v_1}.$

For $\lambda > \lambda^*,$ we can bound $\lambda \alpha K_{\hat V_1}\left(\frac{1}{\lambda \alpha}\right)$ from below and above for $\hat \nu_1$ - a.e. $\hat V_1$ via:
\[
\mathbbm{E}_{V_2 \sim \mathbb{Q}_{\hat V_1}}\left[\ell(V_2)\right] \leq \frac{\log \mathbbm{E}_{V_2 \sim \mathbbm{Q}_{ v_1}}\left[\exp\left((1/\lambda\alpha) \cdot \ell(V_2) \right)\right]}{1/(\lambda\alpha)} \equiv \frac{K_{\hat V_1}\left(\frac{1}{\lambda \alpha}\right)}{1/(\lambda\alpha)} \leq \frac{K_{\hat V_1}\left(\frac{1}{\lambda^* \alpha}\right)}{1/(\lambda^*\alpha)}.
\]
The first inequality follows from Jensen's inequality applied under $\mathbb{Q}_{\hat V_1}$.
The last inequality follows because $K_{v_1}(0)=0$ and convexity of $K_{v_1}$ makes $t\mapsto K_{v_1}(t)/t$ nondecreasing. We note that both the lower and upper bounds have finite expectations under $\hat\nu_1$, and $\frac{K_{\hat V_1}(t)}{t}\downarrow K'_{\hat V_1,+}(0)=\mathbbm{E}_{V_2\sim\mathbb{Q}_{\hat V_1}}[\ell(V_2)]$ as $t\downarrow0$, for $\hat\nu_1$-a.e.~$\hat V_1$; hence, dominated convergence applies to the nonnegative difference $\frac{K_{\hat V_1}(t)}{t}-\mathbbm{E}_{V_2\sim\mathbb{Q}_{\hat V_1}}[\ell(V_2)]$ to yield 
\[
\inf_{\lambda > 0} \lambda\alpha \mathbbm{E}_{\hat\nu_1} \left[K_{\hat V_1}\left(\frac{1}{\lambda \alpha}\right)\right] = \inf_{t > 0} \frac{\mathbbm{E}_{\hat\nu_1} \bigl[K_{\hat V_1}(t)\bigr]}{t} = \lim_{t\downarrow 0} \frac{\mathbbm{E}_{\hat\nu_1} \bigl[K_{\hat V_1}(t)\bigr]}{t} = \mathbbm{E}_{\hat\nu_1}\bigl[\mathbbm{E}_{V_2 \sim \mathbb{Q}_{\hat V_1}}[\ell(V_2)]\bigr].
\]

We conclude by using this equality in the examination of the three cases: $\rho < \underline{r};$ $\rho = \underline{r}$; and $\rho > \underline{r}.$

When $\rho < \underline{r}$, by \cref{lem:sinkhorn-droPrime}, $\eqref{eq:sinkhorn-dro} = -\infty$ by infeasibility. Meanwhile, 
\[
\eqref{eq:sinkhorn-dro-dual} = \inf_{\lambda > 0} \lambda \left(\rho - \underline{r}\right) + \lambda\alpha \mathbbm{E}_{\hat\nu_1} \left[K_{\hat V_1}\left(\frac{1}{\lambda \alpha}\right)\right] = \lim_{\lambda \rightarrow \infty} \lambda \left(\rho - \underline{r}\right) + \lambda\alpha \mathbbm{E}_{\hat\nu_1} \left[K_{\hat V_1}\left(\frac{1}{\lambda \alpha}\right)\right] = -\infty,
\]
confirming \eqref{eq:sinkhorn-dro} = \eqref{eq:sinkhorn-dro-dual}.

When $\rho = \underline{r},$ then $\eqref{eq:sinkhorn-dro} = v(\underline{r}) = \mathbbm{E}_{\hat\nu_1}\bigl[\mathbbm{E}_{V_2 \sim \mathbb{Q}_{\hat V_1}}[\ell(V_2)]\bigr].$ Meanwhile,
\begin{align*}
\eqref{eq:sinkhorn-dro-dual} &= 
\min\left(\inf_{\lambda > 0} \lambda \left(\rho - \underline{r}\right) + \lambda\alpha\mathbbm{E}_{\hat\nu_1}\left[\log \mathbbm{E}_{V_2 \sim \mathbbm{Q}_{ v_1}}\left[\exp\left(\frac{\ell(V_2)}{\lambda\alpha} \right)\right]\right], \esssup_{\hat\nu_2}\ell\right)
\\
&= \min\left(\inf_{\lambda > 0} \lambda\alpha \mathbbm{E}_{\hat\nu_1} \left[K_{\hat V_1}\left(\frac{1}{\lambda \alpha}\right)\right], \esssup_{\hat\nu_2}\ell\right) = \mathbbm{E}_{\hat\nu_1}\bigl[\mathbbm{E}_{V_2 \sim \mathbb{Q}_{\hat V_1}}[\ell(V_2)]\bigr],
\end{align*}
confirming \eqref{eq:sinkhorn-dro} = \eqref{eq:sinkhorn-dro-dual}.

When $\rho > \underline{r},$ then $p(\tau) = v(\rho + \tau) \in \reals$ for $|\tau|$ sufficiently small, so that $0 \in \intr \dom p.$ It follows that $\eqref{eq:sinkhorn-dro} = \eqref{eq:sinkhorn-droPrime} = v(\rho) = p(0) = \eqref{eq:sinkhorn-dro-dual},$ where the last equality holds by \cref{prop:convex_duality}\ref{cond:slater}. 
\end{proof}

\ifPaperJRNL\else
    \PaperEndAppendix

    \bibliographystyle{\PaperBibStyle}
    \bibliography{refs}%
\fi

\end{document}